\numberwithin{equation}{section}
\theoremstyle{plain}
\newtheorem{thm}{\protect\theoremname}[section]
  \theoremstyle{definition}
  \newtheorem{defn}[thm]{\protect\definitionname}
  \theoremstyle{definition}
  \newtheorem{example}[thm]{\protect\examplename}
  \theoremstyle{plain}
  \newtheorem{lem}[thm]{\protect\lemmaname}
  \theoremstyle{remark}
  \newtheorem{rem}[thm]{\protect\remarkname}
  \theoremstyle{plain}
  \newtheorem{prop}[thm]{\protect\propositionname}
  \theoremstyle{plain}
  \newtheorem{cor}[thm]{\protect\corollaryname}
\let\mathcal=\CMcal
\let\shorti=\i
\let\ldash=\l
  \providecommand{\corollaryname}{Corollary}
  \providecommand{\definitionname}{Definition}
  \providecommand{\examplename}{Example}
  \providecommand{\lemmaname}{Lemma}
  \providecommand{\propositionname}{Proposition}
  \providecommand{\remarkname}{Remark}
\providecommand{\theoremname}{Theorem}
\begin{document}
\global\long\def\e{\varepsilon}
\global\long\def\N{\mathbb{N}}
\global\long\def\Z{\mathbb{Z}}
\global\long\def\Q{\mathbb{Q}}
\global\long\def\R{\mathbb{R}}
\global\long\def\C{\mathbb{C}}
\global\long\def\G{\mathbb{G}}
\global\long\def\HH{\mathbb{H}}

\global\long\def\H{\EuScript H}
\global\long\def\K{\EuScript K}
\global\long\def\a{\alpha}
\global\long\def\be{\beta}
\global\long\def\l{\lambda}
\global\long\def\om{\omega}
\global\long\def\z{\zeta}
\global\long\def\Aa{\mathcal{A}}

\global\long\def\Ree{\operatorname{Re}}
\global\long\def\Img{\operatorname{Im}}
\global\long\def\linspan{\operatorname{span}}
\global\long\def\slim{\operatorname*{s-lim}}
\global\long\def\clinspan{\operatorname{\overline{span}}}
\global\long\def\presb#1#2{\prescript{}{#1}{#2}}

\global\long\def\tensor{\otimes}
\global\long\def\tensormin{\otimes_{\mathrm{min}}}
\global\long\def\tensorn{\overline{\otimes}}

\global\long\def\A{\forall}

\global\long\def\i{\mathrm{id}}

\global\long\def\one{\mathds{1}}
\global\long\def\tr{\mathrm{tr}}
\global\long\def\Ww{\mathds{W}}
\global\long\def\wW{\text{\reflectbox{\ensuremath{\Ww}}}\:\!}
\global\long\def\op{\mathrm{op}}
\global\long\def\WW{{\mathds{V}\!\!\text{\reflectbox{\ensuremath{\mathds{V}}}}}}

\global\long\def\Linfty#1{L^{\infty}(#1)}
\global\long\def\Lone#1{L^{1}(#1)}
\global\long\def\LoneStar#1{L_{*}^{1}(#1)}
\global\long\def\Ltwo#1{L^{2}(#1)}
\global\long\def\Cz#1{C_{0}(#1)}
\global\long\def\CzU#1{C_{0}^{\mathrm{u}}(#1)}
\global\long\def\Ad#1{\mathrm{Ad}(#1)}
\global\long\def\VN#1{\mathrm{VN}(#1)}
\global\long\def\d{~\mathrm{d}}

\title{On positive definiteness over locally compact quantum groups}

\author{Volker Runde}

\address{Department of Mathematical and Statistical Sciences, University of
Alberta, Edmonton, Alberta T6G 2G1, Canada}

\email{vrunde@ualberta.ca}

\author{Ami Viselter}

\address{Department of Mathematical and Statistical Sciences, University of
Alberta, Edmonton, Alberta T6G 2G1, Canada}

\address{Department of Mathematics, University of Haifa, 31905 Haifa, Israel}

\email{aviselter@staff.haifa.ac.il}

\thanks{Both authors were supported by NSERC Discovery Grants.}

\keywords{Bicrossed product, locally compact quantum group, non-commutative
$L^{p}$-space, positive-definite function, positive-definite measure,
separation property}

\subjclass[2000]{Primary: 20G42, Secondary: 22D25, 43A35, 46L51, 46L52, 46L89}
\begin{abstract}
The notion of positive-definite functions over locally compact quantum
groups was recently introduced and studied by Daws and Salmi. Based
on this work, we generalize various well-known results about positive-definite
functions over groups to the quantum framework. Among these are theorems
on ``square roots'' of positive-definite functions, comparison of
various topologies, positive-definite measures and characterizations
of amenability, and the separation property with respect to compact
quantum subgroups.
\end{abstract}

\maketitle

\section*{Introduction}

Positive-definite functions over locally compact groups, introduced
by Godement in \citep{Godement__positive_def_func}, play a central
role in abstract harmonic analysis. If $G$ is a locally compact group,
a continuous function $f:G\to\C$ is called \emph{positive definite}
if for every $n\in\N$ and $s_{1},\ldots,s_{n}\in G$, the matrix
$\left(f(s_{i}^{-1}s_{j})\right)_{1\leq i,j\leq n}$ is positive (we
always take continuity as part of the definition). Positive-definite
functions are tightly connected with various aspects of the group,
such as representations, group properties (amenability and other approximation
properties, property (T), etc.), the Banach algebras associated to
the group and many more, as exemplified by the numerous papers dedicated
to them. It is thus natural to extend this theory to a framework more
general than locally compact groups. This was done in the context
of Kac algebras by Enock and Schwartz \citep[Section 1.3]{Enock_Schwartz__book}.
Recently, Daws \citep{Daws__CPM_LCQGs_2012} and Daws and Salmi \citep{Daws_Salmi__CPD_LCQGs_2013}
generalized this work to the much wider context of locally compact
quantum groups in the sense of Kustermans and Vaes \citep{Kustermans_Vaes__LCQG_C_star,Kustermans_Vaes__LCQG_von_Neumann}.
They introduced several notions of positive definiteness, corresponding
to the classical ones, and established the precise relations between
them.

These foundations being laid, the next step should be generalizing
well-known useful results from abstract harmonic analysis about positive-definite
functions to locally compact quantum groups. This is the purpose of
the present paper, which is organized as follows. 

In \prettyref{sec:square_integrable_positive_def} we generalize a
result of Godement, essentially saying that a positive-definite function
has a ``square root'' if and only if it is square integrable. 

A theorem of Ra{\u\shorti}kov \citep{Raikov__types_of_conv} and
Yoshizawa \citep{Yoshizawa__convergence_PD_func} says that on the
set of positive-definite functions of norm $1$, the $w^{*}$-topology
induced by $L^{1}$ coincides with the topology of uniform convergence
on compact subsets. This result was improved by several authors, and
eventually Granirer and Leinert \citep{Granirer_Leinert__top_coincide}
generalized it to treat the different topologies on the unit sphere
of the Fourier--Stieltjes algebra. Hu, Neufang and Ruan asked in \citep{Hu_Neufang_Ruan__mod_maps_LCQG}
whether this result extends to locally compact quantum groups. We
give an affirmative answer to their question in \prettyref{sec:comparison_of_topologies}.
Generalizing other results from \citep{Granirer_Leinert__top_coincide}
as well, we require the theory of noncommutative $L^{p}$-spaces of
locally compact quantum groups. The background on this subject appears
in \prettyref{sec:convolution_L_p}.

Another notion due to Godement is that of positive-definite measures.
He established an important connection between these and amenability
of the group in question. In \prettyref{sec:char_co_amen_of_dual}
we extend this result to locally compact quantum groups. 

The separation property of locally compact groups with respect to
closed subgroups was introduced by Lau and Losert \citep{Lau_Losert__w_star_closed_comp_inv_subsp}
and Kaniuth and Lau \citep{Kaniuth_Lau__separation_prop}, and was
subsequently studied by several authors. A fundamental result is that
the separation property is always satisfied with respect to compact
subgroups. \prettyref{sec:separation_property} is devoted to generalizing
this to locally compact quantum groups. We introduce the separation
property with respect to closed quantum subgroups, find a condition
under which the separation property is satisfied with respect to a
given compact quantum subgroup, and show that it is indeed satisfied
in many examples, including $\mathbb{T}$ as a closed quantum subgroup
of quantum $E(2)$.

We remark that most sections are independent of each other, but results
from \prettyref{sec:comparison_of_topologies} are needed in other
sections.

\section{Preliminaries}

We begin with fixing some conventions. Given a Hilbert space $\H$
and vectors $\z,\eta\in\H$, we denote by $\om_{\z,\eta}$ the functional
that takes $x\in B(\H)$ to $\left\langle x\z,\eta\right\rangle $,
and let $\om_{\z}:=\om_{\z,\z}$. The identity map on a $C^{*}$-algebra
$A$ is denoted by $\i$, and its unit, if exists, by $\one$. For
a functional $\om\in A^{*}$, we define $\overline{\om}\in A^{*}$
by $\overline{\om}(x):=\overline{\om(x^{*})}$, $x\in A$. When no
confusion is caused, we also write $\om$ for its unique extension
to the multiplier algebra $M(A)$ that is strictly continuous on the
closed unit ball of $M(A)$ \citep[Corollary 5.7]{Lance}. 

Let $A,B$ be $C^{*}$-algebras. A $*$-homomorphism from $A$ to
$B$ or, more generally, to $M(B)$ that is nondegenerate (namely,
$\linspan\Phi(A)B$ is dense in $B$) has a unique extension to a
(unital) $*$-homomorphism from $M(A)$ to $M(B)$ \citep[Proposition 2.1]{Lance}.
We use the same notation for this extension.

For an n.s.f.~(normal, semi-finite, faithful) weight $\varphi$ on
a von Neumann algebra $M$ \citep[Chapter VII]{Takesaki__book_vol_2},
we denote $\mathcal{N}_{\varphi}:=\left\{ x\in M:\varphi(x^{*}x)<\infty\right\} $.

The symbol $\sigma$ stands for the flip operator $x\tensor y\mapsto y\tensor x$,
for $x,y$ in some $C^{*}$-algebras. We use the symbols $\tensor,\tensorn,\tensormin$
for the Hilbert space, normal spatial and minimal tensor products,
respectively. 

The basics of positive-definite functions on locally compact groups
are presented in the book of Dixmier \citep{Dixmier__C_star_English}.
From time to time we will refer to the Banach algebras associated
with a locally compact group $G$, such as the Fourier algebra $A(G)$
and the Fourier--Stieltjes algebra $B(G)$; see Eymard \citep{Eymard__Fourier_alg}.
For the Tomita--Takesaki theory, see the books by Str\u{a}til\u{a}
\citep{Stratila__mod_thy} and Takesaki \citep{Takesaki__book_vol_2},
or Takesaki's original monograph \citep{Takesaki__Tomita_thy}. We
recommend B{\'e}dos, Murphy and Tuset \citep[Section 2]{Bedos_Murphy_Tuset__am_and_coam}
for statements and proofs of folklore facts about the slice maps at
the $C^{*}$-algebraic level.

\subsection{Locally compact quantum groups}

The following axiomatization of locally compact quantum groups is
due to Kustermans and Vaes \citep{Kustermans_Vaes__LCQG_C_star,Kustermans_Vaes__LCQG_von_Neumann}
(see also Van Daele \citep{Van_Daele__LCQGs}). It describes the same
objects as that of Masuda, Nakagami and Woronowicz \citep{Masuda_Nakagami_Woronowicz}.
Unless stated otherwise, the material in this subsection is taken
from \citep{Kustermans_Vaes__LCQG_C_star,Kustermans_Vaes__LCQG_von_Neumann}.
\begin{defn}
A \emph{locally compact quantum group} (henceforth abbreviated to
``LCQG'') is a pair $\G=(\Linfty{\G},\Delta)$ with the following
properties:
\begin{enumerate}
\item $\Linfty{\G}$ is a von Neumann algebra;
\item $\Delta:\Linfty{\G}\to\Linfty{\G}\tensorn\Linfty{\G}$ is a co-multiplication,
that is, a faithful, normal, unital $*$-homomorphism which is co-associative:
$(\Delta\tensor\i)\Delta=(\i\tensor\Delta)\Delta$;
\item there exist n.s.f.~weights $\varphi,\psi$ on $\Linfty{\G}$, called
the Haar weights, satisfying
\[
\begin{split} & \varphi((\omega\tensor\i)\Delta(x))=\omega(\one)\varphi(x)\text{ for all }\om\in\Linfty{\G}_{*}^{+},x\in\Linfty{\G}^{+}\text{ such that }\varphi(x)<\infty\text{ (left invariance),}\\
 & \psi((\i\tensor\omega)\Delta(x))=\omega(\one)\psi(x)\text{ for all }\om\in\Linfty{\G}_{*}^{+},x\in\Linfty{\G}^{+}\text{ such that }\psi(x)<\infty\text{ (right invariance).}
\end{split}
\]
 
\end{enumerate}
\end{defn}
Let $\G$ be a LCQG. The left and right Haar weights, only whose existence
is assumed, are unique up to scaling. The predual of $\Linfty{\G}$
is denoted by $\Lone{\G}$. We define a convolution $*$ on $\Lone{\G}$
by $(\om_{1}*\om_{2})(x):=(\om_{1}\tensor\om_{2})\Delta(x)$ ($\om_{1},\om_{2}\in\Lone{\G}$,
$x\in\Linfty{\G}$), making the pair $(\Lone{\G},*)$ into a Banach
algebra. We write $\Ltwo{\G}$ for the Hilbert space of the GNS construction
for $(\Linfty{\G},\varphi)$, and let $\Lambda:\mathcal{N}_{\varphi}\to\Ltwo{\G}$
stand for the canonical injection. A fundamental feature of the theory
is that of duality: $\G$ has a \emph{dual} LCQG $\hat{\G}=(\Linfty{\hat{\G}},\hat{\Delta})$.
Objects pertaining to $\hat{\G}$ will be denoted by adding a hat,
e.g.~$\hat{\varphi},\hat{\psi}$. The GNS construction for $(\Linfty{\hat{\G}},\hat{\varphi})$
yields the same Hilbert space $\Ltwo{\G}$, and henceforth we will
consider both $\Linfty{\G}$ and $\Linfty{\hat{\G}}$ as acting (standardly)
on $\Ltwo{\G}$. We write $J,\hat{J}$ for the modular conjugations
relative to $\Linfty{\G},\Linfty{\hat{\G}}$, respectively, both acting
on $\Ltwo{\G}$.
\begin{example}
Every locally compact group $G$ induces two LCQGs as follows. First,
the LCQG that is identified with $G$ is $(\Linfty G,\Delta)$, where
$(\Delta(f))(t,s):=f(ts)$ for $f\in\Linfty G$ and $t,s\in G$ using
the identification $\Linfty G\tensorn\Linfty G\cong\Linfty{G\times G}$,
and $\varphi$ and $\psi$ are integration against the left and right
Haar measures of $G$, respectively. All LCQGs whose $\Linfty{\G}$
is commutative have this form. Second, the dual of the above, which
is the LCQG $(\VN G,\Delta)$, where $\VN G$ is the left von Neumann
algebra of $G$, $\Delta$ is the unique normal $*$-homomorphism
$\VN G\to\VN G\tensorn\VN G$ mapping the translation $\l_{t}$, $t\in G$,
to $\l_{t}\tensor\l_{t}$, and $\varphi$ and $\psi$ are the Plancherel
weight on $\VN G$. The LCQGs that are co-commutative, namely whose
$\Lone{\G}$ is commutative, are precisely the ones of this form.
The $L^{2}$-Hilbert space of both LCQGs is $\Ltwo G$.
\end{example}
The \emph{left regular co-representation} of $\G$ is a unitary $W\in\Linfty{\G}\tensorn\Linfty{\hat{\G}}$
satisfying $\Delta(x)=W^{*}(\one\tensor x)W$ for every $x\in\Linfty{\G}$
and $(\Delta\tensor\i)(W)=W_{13}W_{23}$ (using leg numbering). The
left regular co-representation of $\hat{\G}$ is $\hat{W}=\sigma(W^{*})$.
The set $\Cz{\G}:=\overline{\{(\i\tensor\hat{\om})(W):\hat{\om}\in\Lone{\hat{\G}}\}}^{\left\Vert \cdot\right\Vert }$
is a weakly dense $C^{*}$-subalgebra of $\Linfty{\G}$, satisfying
$\Delta(\Cz{\G})\subseteq M(\Cz{\G}\tensormin\Cz{\G})$. This allows
to define a convolution $*$ on $\Cz{\G}^{*}$, which becomes a Banach
algebra. Viewing $\Lone{\G}$ as a subspace of $\Cz{\G}^{*}$ by restriction,
the former is a (closed, two-sided) ideal in the latter. We define
a map $\l:\Lone{\G}\to\Cz{\hat{\G}}$ by $\l(\om):=(\om\tensor\i)(W)$.
It is easily checked that $\l$ is a contractive homomorphism.

We review the construction of the left-invariant weight $\hat{\varphi}$
of $\hat{\G}$. Let $\mathcal{I}$ stand for all ``square-integrable
elements of $\Lone{\G}$'', namely all $\om\in\Lone{\G}$ such that
there is $M<\infty$ with $\left|\om(x^{*})\right|\leq M\left\Vert \Lambda(x)\right\Vert $
for every $x\in\mathcal{N}_{\varphi}$; equivalently, there is $\xi=\xi(\om)\in\Ltwo{\G}$
such that $\om(x^{*})=\left\langle \xi,\Lambda(x)\right\rangle $
for every $x\in\mathcal{N}_{\varphi}$. Then $\hat{\varphi}$ is the
unique n.s.f.~weight on $\Linfty{\hat{\G}}$ whose GNS construction
$(\Ltwo{\G},\hat{\Lambda})$ satisfies $\hat{\Lambda}(\l(\om))=\xi(\om)$
for all $\om\in\mathcal{I}$ and that $\l(\mathcal{I})$ is a $*$-ultrastrong--norm
core for $\hat{\Lambda}$.

A fundamental object for $\G$ is its \emph{antipode} $S$, which
is a $*$-ultrastrongly closed, densely defined, generally unbounded
linear operator on $\Linfty{\G}$. It has the ``polar decomposition''
$S=R\circ\tau_{-i/2}$, where $R$ stands for the \emph{unitary antipode}
and $\left(\tau_{t}\right)_{t\in\R}$ for the \emph{scaling group}.
We will not discuss here the definitions of these maps. The subspace
\[
\LoneStar{\G}:=\left\{ \om\in\LoneStar{\G}:(\exists\rho\in\Lone{\G}\,\forall x\in D(S))\quad\rho(x)=\overline{\om}(S(x))\right\} 
\]
is a dense subalgebra of $\Lone{\G}$. For $\om\in\LoneStar{\G}$,
let $\om^{*}$ be the unique element $\rho\in\Lone{\G}$ such that
$\rho(x)=\overline{\om}(S(x))$ for each $x\in D(S)$. Then $\om\mapsto\om^{*}$
is an involution on $\LoneStar{\G}$, and $\l|_{\LoneStar{\G}}$ is
a $*$-homomorphism. Moreover, $\LoneStar{\G}$ is an involutive Banach
algebra when equipped with the new norm $\left\Vert \om\right\Vert _{*}:=\max(\left\Vert \om\right\Vert ,\left\Vert \om^{*}\right\Vert )$.

A useful construction is the opposite LCQG $\G^{\mathrm{op}}$ \citep[Section 4]{Kustermans_Vaes__LCQG_von_Neumann},
which has $\Linfty{\G^{\mathrm{op}}}:=\Linfty{\G}$ and co-multiplication
given by $\Delta^{\mathrm{op}}:=\sigma\circ\Delta$.

The \emph{universal} setting of $\G$ was defined by Kustermans \citep{Kustermans__LCQG_universal}
as follows. Let $\CzU{\G}$ be the enveloping $C^{*}$-algebra of
$\LoneStar{\hat{\G}}$. The canonical embedding of $\LoneStar{\hat{\G}}$
in $\CzU{\G}$ is denoted by $\hat{\l}_{\mathrm{u}}$. By universality,
there exists a surjective $*$-homomorphism $\pi_{\mathrm{u}}:\CzU{\G}\to\Cz{\G}$
satisfying $\pi_{\mathrm{u}}(\hat{\l}_{\mathrm{u}}(\om))=\hat{\l}(\om)$
for every $\om\in\LoneStar{\hat{\G}}$. There exists a co-multiplication
$\Delta_{\mathrm{u}}:\CzU{\G}\to M(\CzU{\G}\tensormin\CzU{\G})$ satisfying
$(\pi_{\mathrm{u}}\tensor\pi_{\mathrm{u}})\Delta_{\mathrm{u}}=\Delta\pi_{\mathrm{u}}$,
inducing a convolution in $\CzU{\G}^{*}$, making it an involutive
Banach algebra. Using the isometry $\pi_{\mathrm{u}}^{*}:\Cz{\G}^{*}\to\CzU{\G}^{*}$,
one can see $\Cz{\G}^{*}$ as a subset of $\CzU{\G}^{*}$, which is
a (closed, two-sided) ideal. Furthermore, $\Lone{\G}$ is also a (closed,
two-sided) ideal in $\CzU{\G}^{*}$ \citep[Proposition 8.3]{Daws__mult_self_ind_dual_B_alg}.

The left regular co-representation of $\G$ has a universal version.
It is a unitary $\WW\in M(\CzU{\G}\tensormin\CzU{\hat{\G}})$ satisfying
$(\Delta_{\mathrm{u}}\tensor\i)(\WW)=\WW_{13}\WW_{23}$ and $(\pi_{\mathrm{u}}\tensor\hat{\pi}_{\mathrm{u}})(\WW)=W$.
Its dual object is $\hat{\WW}=\sigma(\WW^{*})$. Letting $\Ww:=(\i\tensor\hat{\pi}_{\mathrm{u}})(\WW)$
and $\wW:=(\pi_{\mathrm{u}}\tensor\i)(\WW)$, we have $\Ww\in M(\CzU{\G}\tensormin\Cz{\hat{\G}})$,
$\wW\in M(\Cz{\G}\tensormin\CzU{\hat{\G}})$ and $(\i\tensor\pi_{\mathrm{u}})\Delta_{\mathrm{u}}(x)=\Ww^{*}(\one\tensor\pi_{\mathrm{u}}(x))\Ww$
for every $x\in\CzU{\G}$. Moreover, representing $\CzU{\G}$ faithfully
on a Hilbert space $\H_{\mathrm{u}}$ and viewing the operator $\Ww\in M(\CzU{\G}\tensormin\Cz{\hat{\G}})$
as an element of $B(\H_{\mathrm{u}}\tensor\Ltwo{\G})$, we have $\Ww\in M(\CzU{\G}\tensormin\K(\Ltwo{\G}))$.
Also $\l_{\mathrm{u}}(\om)=(\om\tensor\i)(\wW)$ for every $\om\in\LoneStar{\G}$,
and the map $\l^{\mathrm{u}}:\CzU{\G}^{*}\to M(\Cz{\hat{\G}})$, $\om\mapsto(\om\tensor\i)(\Ww)$
for $\om\in\CzU{\G}^{*}$, is a $*$-homomorphism. 

The universality property of $\CzU{\G}$ implies the existence of
the \emph{co-unit}, which is the unique $*$-homomorphism $\epsilon\in\CzU{\G}_{+}^{*}$
such that $(\epsilon\tensor\i)\circ\Delta_{\mathrm{u}}=\i=(\i\tensor\epsilon)\circ\Delta_{\mathrm{u}}$.
It satisfies $(\epsilon\tensor\i)(\WW)=\one_{M(\CzU{\hat{\G}})}$.

For a Banach algebra $A$, the canonical module action of $A$ on
its dual $A^{*}$ is denoted by juxtaposition, that is,
\[
(\mu a)(b)=\mu(ab)\quad\text{and}\quad(a\mu)(b)=\mu(ba)\qquad(\forall\mu\in A^{*},a,b\in A).
\]
This notation will be used for the actions of $\Linfty{\G}$, $\Cz{\G}$
and $\CzU{\G}$ on their duals.

The canonical module actions of $\Lone{\G}$ on $\Linfty{\G}$ will
be denoted by `$\cdot$', so we have 
\[
\om\cdot a=(\i\tensor\om)\Delta(a)\quad\text{and}\quad a\cdot\om=(\om\tensor\i)\Delta(a)\qquad(\forall\om\in\Lone{\G},a\in\Linfty{\G}).
\]
Each of $\left\{ \om\cdot a:\om\in\Lone{\G},a\in\Cz{\G}\right\} $
and $\left\{ a\cdot\om:\om\in\Lone{\G},a\in\Cz{\G}\right\} $ spans
a norm dense subset of $\Cz{\G}$. 

More generally, every $\mu\in\CzU{\G}^{*}$ acts on $\Linfty{\G}$
as follows: for $a\in\Linfty{\G}$, $\mu\cdot a$ and $a\cdot\mu$
are defined to be the unique elements of $\Linfty{\G}$ satisfying
\[
\om(\mu\cdot a)=(\om*\mu)(a),\quad\om(a\cdot\mu)=(\mu*\om)(a)\qquad(\forall\om\in\Lone{\G}).
\]
Note that if $\mu_{1},\mu_{2}\in\CzU{\G}^{*}$ and $a\in\Linfty{\G}$,
then 
\[
\om[\mu_{1}\cdot(\mu_{2}\cdot a)]=(\om*\mu_{1})(\mu_{2}\cdot a)=(\om*\mu_{1}*\mu_{2})(a)=\om[(\mu_{1}*\mu_{2})\cdot a],
\]
thus $\mu_{1}\cdot(\mu_{2}\cdot a)=(\mu_{1}*\mu_{2})\cdot a$. Similarly,
$(a\cdot\mu_{1})\cdot\mu_{2}=a\cdot(\mu_{1}*\mu_{2})$. 
\begin{lem}
\label{lem:C_0__C_0_u_star}If $a\in\Cz{\G}$ and $\mu\in\CzU{\G}^{*}$,
then $\mu\cdot a,a\cdot\mu\in\Cz{\G}$. \end{lem}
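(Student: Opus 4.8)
The plan is to show that for fixed $\mu\in\CzU{\G}^{*}$ both maps $a\mapsto\mu\cdot a$ and $a\mapsto a\cdot\mu$ are bounded on $\Linfty{\G}$ and carry a norm-dense subset of $\Cz{\G}$ into $\Cz{\G}$; since $\Cz{\G}$ is norm-closed, this will suffice. I would treat $\mu\cdot a$ in detail, the case of $a\cdot\mu$ being entirely symmetric.

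First I would record that $a\mapsto\mu\cdot a$ is bounded on $\Linfty{\G}$ with norm at most $\left\Vert \mu\right\Vert $. Indeed, by definition $\om(\mu\cdot a)=(\om*\mu)(a)$ for every $\om\in\Lone{\G}$, so that $\left\Vert \mu\cdot a\right\Vert =\sup_{\left\Vert \om\right\Vert \leq1}\left|(\om*\mu)(a)\right|$ by the predual duality. Here $\om*\mu$ is the product in the Banach algebra $\CzU{\G}^{*}$; since $\Lone{\G}$ is a two-sided ideal in $\CzU{\G}^{*}$ and the inclusion $\Lone{\G}\hookrightarrow\CzU{\G}^{*}$ is isometric, we have $\om*\mu\in\Lone{\G}$ with $\left\Vert \om*\mu\right\Vert \leq\left\Vert \om\right\Vert \left\Vert \mu\right\Vert $, whence $\left|(\om*\mu)(a)\right|\leq\left\Vert \om\right\Vert \left\Vert \mu\right\Vert \left\Vert a\right\Vert $ and therefore $\left\Vert \mu\cdot a\right\Vert \leq\left\Vert \mu\right\Vert \left\Vert a\right\Vert $.

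Next I would verify the claim on the dense subspace $\linspan\left\{ \om\cdot b:\om\in\Lone{\G},\,b\in\Cz{\G}\right\} $ of $\Cz{\G}$. By linearity it is enough to treat $a=\om\cdot b$ with $\om\in\Lone{\G}$ and $b\in\Cz{\G}$. Viewing $\om$ as an element of $\CzU{\G}^{*}$, the two definitions of the module action agree, since for $\nu\in\Lone{\G}$ one computes $\nu(\om\cdot b)=(\nu*\om)(b)=(\nu\tensor\om)\Delta(b)$; hence the associativity relation gives $\mu\cdot a=\mu\cdot(\om\cdot b)=(\mu*\om)\cdot b$. The ideal property yields $\mu*\om\in\Lone{\G}$, so $(\mu*\om)\cdot b$ is of the form $\nu\cdot b$ with $\nu\in\Lone{\G}$ and $b\in\Cz{\G}$, and such elements lie in $\Cz{\G}$. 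Thus $\mu\cdot a\in\Cz{\G}$ for every $a$ in the dense subspace, and combining this with the boundedness above and the closedness of $\Cz{\G}$ yields $\mu\cdot a\in\Cz{\G}$ for all $a\in\Cz{\G}$.

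The map $a\mapsto a\cdot\mu$ is handled identically, now using $\om(a\cdot\mu)=(\mu*\om)(a)$ for the boundedness, the density of $\linspan\left\{ b\cdot\om:\om\in\Lone{\G},\,b\in\Cz{\G}\right\} $, and the relation $(b\cdot\om)\cdot\mu=b\cdot(\om*\mu)$ together with $\om*\mu\in\Lone{\G}$. The only point requiring care---and which I regard as the crux---is precisely this reduction to the dense subspace: one must resist the temptation to factor a general $a\in\Cz{\G}$ as $\om\cdot b$ directly, as that would call for a bounded approximate identity in $\Lone{\G}$ (equivalently, co-amenability of $\G$), which is not available in general. Replacing the factorization of $a$ by the combination of norm-density of the spanning set and continuity of the module map circumvents this difficulty entirely.
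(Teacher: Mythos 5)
Your proof is correct and follows essentially the same route as the paper's: apply the associativity relation $\mu\cdot(\om\cdot b)=(\mu*\om)\cdot b$ together with the ideal property $\mu*\om\in\Lone{\G}$ on the spanning set $\left\{ \om\cdot b:\om\in\Lone{\G},b\in\Cz{\G}\right\} $, then extend by density. The only difference is that you spell out the boundedness of $a\mapsto\mu\cdot a$ explicitly, which the paper leaves implicit in the phrase ``by density''.
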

\begin{proof}
Fix $\mu\in\CzU{\G}^{*}$. If $\om\in\Lone{\G}$ and $b\in\Cz{\G}$,
then $\mu\cdot(\om\cdot b)=(\mu*\om)\cdot b\in\Cz{\G}$ as $\mu*\om\in\Lone{\G}$.
By density, $\mu\cdot a\in\Cz{\G}$ for all $a\in\Cz{\G}$. The proof
for $a\cdot\mu$ is similar.
\end{proof}

\subsection{Types of LCQGs}

Compact quantum groups were introduced by Woronowicz in \citep{Woronowicz__symetries_quantiques},
and discrete quantum groups by Effros and Ruan \citep{Effros_Ruan__discrete_QGs}
and by Van Daele \citep{Van_Daele__discrete_CQs}. We will not present
their original definitions, but define them through the Kustermans--Vaes
axiomatization. Complete proofs of the equivalence of various characterizations
of compact and discrete quantum groups can be found in \citep{Runde__charac_compact_discr_QG}.

A LCQG $\G$ is \emph{compact} if its left Haar weight $\varphi$
is finite. This is equivalent to $\Cz{\G}$ being unital. In this
case, we denote $\Cz{\G}$ by $C(\G)$. Moreover, the right Haar weight
$\psi$ is also finite, and assuming, as customary, that both $\varphi$
and $\psi$ are states, they are equal. 

A LCQG $\G$ is \emph{discrete} if it is the dual of a compact quantum
group. This is equivalent to $(\Lone{\G},*)$ admitting a unit $\epsilon$.
In this case, we denote $\Cz{\G},\Linfty{\G}$ by $c_{0}(\G),\ell^{\infty}(\G)$,
respectively, and have 
\[
c_{0}(\G)\cong c_{0}-\bigoplus_{\a\in\mathrm{Irred}(\hat{\G})}M_{n(\a)}\quad\text{and}\quad\ell^{\infty}(\G)\cong\ell^{\infty}-\bigoplus_{\a\in\mathrm{Irred}(\hat{\G})}M_{n(\a)},
\]
where $\mathrm{Irred}(\G)$ is the set of equivalence classes of (necessarily
finite-dimensional) irreducible unitary co-representations of $\hat{\G}$,
and for every $\a\in\mathrm{Irred}(\hat{\G})$, $n(\a)\in\N$ denotes
the dimension of the representation. Particularly, the summand corresponding
to the trivial co-representation of $\hat{\G}$ gives a central minimal
projection $p$ in $\ell^{\infty}(\G)$, satisfying $ap=\epsilon(a)p=pa$
for every $a\in\ell^{\infty}(\G)$. 

A LCQG $\G$ is called \emph{co-amenable} (see B{\'e}dos and Tuset
\citep{Bedos_Tuset_2003} or Desmedt, Quaegebeur and Vaes \citep{Desmedt_Quaegebeur_Vaes},
who use a different terminology) if $\Lone{\G}$ admits a bounded
approximate identity. This is equivalent to the Banach algebra $(\Cz{\G}^{*},*)$
having a unit \citep[Theorem 3.1]{Bedos_Tuset_2003}, which is called
the \emph{co-unit} of $\G$ and denoted by $\epsilon$. It is also
equivalent to the surjection $\pi_{\mathrm{u}}:\CzU{\G}\to\Cz{\G}$
being an isomorphism, in which case we simply identify $\CzU{\G}$
with $\Cz{\G}$.

Every locally compact group $G$ is co-amenable as a (commutative)
quantum group, while its co-commutative dual $\hat{G}$ is co-amenable
if and only if $G$ is amenable as a group. Discrete quantum groups
are trivially co-amenable.

\subsection{Positive-definite functions over LCQGs}

Let $\G$ be a LCQG. In \citep{Daws__CPM_LCQGs_2012,Daws_Salmi__CPD_LCQGs_2013},
Daws and Salmi introduced four notions of positive definiteness for
elements of $\Linfty{\G}$. Here we will need only two of them, namely
(1) and (2) of \citep{Daws_Salmi__CPD_LCQGs_2013}. Note that we use
different notation: $\overline{\om},\om^{*}$ are denoted by $\om^{*},\om^{\sharp}$
in \citep{Daws__CPM_LCQGs_2012,Daws_Salmi__CPD_LCQGs_2013}.
\begin{defn}
Let $\G$ be a LCQG. 
\begin{enumerate}
\item \label{enu:def_pos_def_func}A \emph{positive-definite function} is
$x\in\Linfty{\G}$ satisfying $(\om^{*}*\om)(x^{*})\geq0$ for every
$\om\in\LoneStar{\G}$. 
\item \label{enu:def_FS_transform_pos_mes}A \emph{Fourier--Stieltjes transform
of a positive measure} is an element $x$ of the form $(\i\tensor\hat{\mu})(\wW^{*})=\mathbf{\hat{\l}^{\mathrm{u}}}(\hat{\mu})$
for some $\hat{\mu}\in\CzU{\hat{\G}}_{+}^{*}$. Note that $x\in M(\Cz{\G})$
in this case.
\end{enumerate}
\end{defn}
\begin{thm}[{\citep[Lemma 1 and Theorem 15]{Daws_Salmi__CPD_LCQGs_2013}}]
\label{thm:pos_def}For $x\in\Linfty{\G}$, we have \prettyref{enu:def_FS_transform_pos_mes}$\implies$\prettyref{enu:def_pos_def_func},
and the converse holds when $\G$ is co-amenable.
\end{thm}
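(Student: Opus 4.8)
The plan is to prove the two implications separately. The implication \prettyref{enu:def_FS_transform_pos_mes}$\Rightarrow$\prettyref{enu:def_pos_def_func} I would obtain by a direct slicing computation with the half-universal regular co-representation $\wW$, requiring no co-amenability. Write $x=(\i\tensor\hat{\mu})(\wW^{*})$ with $\hat{\mu}\in\CzU{\hat{\G}}_{+}^{*}$; since $\hat{\mu}$ is self-adjoint, $x^{*}=(\i\tensor\hat{\mu})(\wW)$. First I would record that $\wW$ is a co-representation, namely $(\Delta\tensor\i)(\wW)=\wW_{13}\wW_{23}$, which follows by applying $\pi_{\mathrm{u}}\tensor\pi_{\mathrm{u}}\tensor\i$ to $(\Delta_{\mathrm{u}}\tensor\i)(\WW)=\WW_{13}\WW_{23}$ and invoking $(\pi_{\mathrm{u}}\tensor\pi_{\mathrm{u}})\Delta_{\mathrm{u}}=\Delta\pi_{\mathrm{u}}$ together with $\wW=(\pi_{\mathrm{u}}\tensor\i)(\WW)$. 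Consequently $\Delta(x^{*})=(\i\tensor\i\tensor\hat{\mu})(\wW_{13}\wW_{23})$, and for $\om\in\LoneStar{\G}$, writing $\l_{\mathrm{u}}(\om)=(\om\tensor\i)(\wW)$, I would compute
\[
(\om^{*}*\om)(x^{*})=(\om^{*}\tensor\om\tensor\hat{\mu})(\wW_{13}\wW_{23})=\hat{\mu}\bigl(\l_{\mathrm{u}}(\om^{*})\,\l_{\mathrm{u}}(\om)\bigr)=\hat{\mu}\bigl(\l_{\mathrm{u}}(\om)^{*}\l_{\mathrm{u}}(\om)\bigr)\geq0,
\]
where the third equality uses that $\l_{\mathrm{u}}$, being the canonical map of $\LoneStar{\G}$ into its enveloping $C^{*}$-algebra $\CzU{\hat{\G}}$, is a $*$-homomorphism, and the inequality is positivity of $\hat{\mu}$.

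For the converse, suppose $\G$ is co-amenable and $x$ is positive definite. The key reformulation is that positive definiteness says exactly that the bounded functional $\phi\in(\LoneStar{\G})^{*}$ given by $\phi(\rho):=\rho(x^{*})$ is a positive functional on the involutive Banach algebra $\LoneStar{\G}$, since $\phi(\om^{*}*\om)=(\om^{*}*\om)(x^{*})\geq0$. Recalling that $\CzU{\hat{\G}}$ is the enveloping $C^{*}$-algebra of $\LoneStar{\G}$, with $\l_{\mathrm{u}}(\LoneStar{\G})$ dense in it, the plan is to extend $\phi$ to a positive functional $\hat{\mu}\in\CzU{\hat{\G}}_{+}^{*}$ with $\hat{\mu}\circ\l_{\mathrm{u}}=\phi$, and then to verify $x=\hat{\l}^{\mathrm{u}}(\hat{\mu})$. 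Granting the extension, the verification is immediate: self-adjointness of $\hat{\mu}$ gives $\hat{\l}^{\mathrm{u}}(\hat{\mu})^{*}=(\i\tensor\hat{\mu})(\wW)$, and for $\om\in\LoneStar{\G}$,
\[
\om\bigl((\i\tensor\hat{\mu})(\wW)\bigr)=\hat{\mu}\bigl(\l_{\mathrm{u}}(\om)\bigr)=\phi(\om)=\om(x^{*}),
\]
so $x^{*}=(\i\tensor\hat{\mu})(\wW)$ by density of $\LoneStar{\G}$ in $\Lone{\G}$, i.e.\ $x=\hat{\l}^{\mathrm{u}}(\hat{\mu})$.

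The extension of $\phi$ is the one place co-amenability is used, and I expect it to be the main obstacle. Co-amenability of $\G$ furnishes a bounded approximate identity for $\LoneStar{\G}$ in its involutive norm $\left\Vert \cdot\right\Vert _{*}$; for a Banach $*$-algebra possessing such an approximate identity, every continuous positive functional is representable and hence extends to a positive functional on the enveloping $C^{*}$-algebra, which applied to $\phi$ produces $\hat{\mu}$. Without co-amenability this step breaks down: $\LoneStar{\G}$ need not have a bounded approximate identity, the associated GNS construction produces no cyclic vector, and $\phi$ is then only guaranteed to live on the reduced algebra $\Cz{\hat{\G}}$ rather than on the universal $\CzU{\hat{\G}}$ demanded by \prettyref{enu:def_FS_transform_pos_mes}. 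The remaining ingredients — the co-representation identity, the $*$-homomorphism property of $\l_{\mathrm{u}}$, and the final density argument — are routine.
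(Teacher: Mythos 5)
The paper gives no proof of this statement---it is imported verbatim from Daws and Salmi (their Lemma 1 and Theorem 15)---so there is nothing internal to compare against, and your argument is essentially the one in that reference. Both directions check out: the forward implication by slicing $(\Delta\tensor\i)(\wW)=\wW_{13}\wW_{23}$ and using that $\l_{\mathrm{u}}$ is a $*$-homomorphism into $\CzU{\hat{\G}}$, so that $(\om^{*}*\om)(x^{*})=\hat{\mu}\bigl(\l_{\mathrm{u}}(\om)^{*}\l_{\mathrm{u}}(\om)\bigr)\geq0$; and the converse by representing the positive functional $\rho\mapsto\rho(x^{*})$ on $(\LoneStar{\G},\left\Vert \cdot\right\Vert _{*})$ using its contractive approximate identity (which is exactly where co-amenability enters, via Daws--Salmi's Theorem 13, quoted in \prettyref{prop:approx_iden_special_features}) and extending it to a positive functional on the enveloping $C^{*}$-algebra $\CzU{\hat{\G}}$.
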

For co-amenable $\G$, we will therefore just use the adjective ``positive
definite'' for these elements.
\begin{rem}
\label{rem:CPD_norm}Let $\G$ be a co-amenable LCQG with co-unit
$\epsilon\in\Cz{\G}^{*}$. Write $\epsilon$ also for its strictly
continuous extension to $M(\Cz{\G})$. If $x\in\Linfty{\G}$ is positive
definite, then $\left\Vert x\right\Vert =\epsilon(x)$, for writing
$x=(\i\tensor\hat{\mu})(\wW^{*})$ with $\hat{\mu}\in\CzU{\hat{\G}}_{+}^{*}$,
we have 
\[
\left\Vert x\right\Vert \geq\epsilon(x)=\epsilon((\i\tensor\hat{\mu})(\wW^{*}))=\hat{\mu}((\epsilon\tensor\i)(\wW^{*}))=\hat{\mu}(\one)=\left\Vert \hat{\mu}\right\Vert \geq\left\Vert x\right\Vert 
\]
(see \citep[Corollary 2.2]{Bedos_Murphy_Tuset__am_and_coam} and \citep[Theorem 3.1]{Bedos_Tuset_2003}).
\end{rem}

\section{\label{sec:square_integrable_positive_def}Square-integrable positive-definite
functions over locally compact quantum groups}

This section is dedicated to proving a generalization of Godement's
theorem on square-integrable positive-definite functions. It can be
established directly along the lines of \citep[Section 13.8]{Dixmier__C_star_English},
but we feel that it is more correct to do it through the generalization
of this result to left Hilbert algebras given by Phillips \citep{Phillips__pos_int_elem}.
We start with some background. Let $\Aa$ be a full (that is, achieved)
left Hilbert algebra \citep{Takesaki__Tomita_thy,Takesaki__book_vol_2}
and $\H$ be the completion of $\Aa$. We denote by $\pi(\xi)$ (resp.
$\pi'(\xi)$) the operator corresponding to a left-bounded (resp.
right-bounded) vector $\xi\in\H$.
\begin{defn}[Perdrizet \citep{Perdrizet__elements_positifs}, Haagerup \citep{Haagerup__standard_form}]
\label{def:P_flat}Let $\mathcal{P}^{\flat}:=\left\{ \eta\in\H:\left\langle \eta,\xi^{\sharp}\xi\right\rangle \geq0\text{ for every \ensuremath{\xi\in\Aa}}\right\} $. 
\end{defn}
This set is evidently a cone in $\H$.
\begin{rem}
\label{rem:pos_left_bounded}Let $\eta\in\H$. \citep[Theorem VI.1.26 (ii)]{Takesaki__book_vol_2}
implies that $\eta\in\mathcal{P}^{\flat}$ if and only if $\left\langle \eta,\pi(\xi)^{*}\xi\right\rangle \geq0$
for every left-bounded vector $\xi\in\H$.\end{rem}
\begin{defn}[\citep{Phillips__pos_int_elem}]
\label{def:integrable_and_root}Let $\eta\in\mathcal{P}^{\flat}$.
\begin{enumerate}
\item Say that $\eta$ is \emph{integrable} if $\sup\left\{ \left\langle \eta,\xi\right\rangle :\xi\text{ is a selfadjoint idempotent in }\Aa\right\} <\infty$. 
\item Say that $\z\in\mathcal{P}^{\flat}$ is a \emph{square root} of $\eta$
if $\left\langle \xi,\eta\right\rangle =\left\langle \pi(\xi)\z,\z\right\rangle $
for every $\xi\in\Aa$.
\end{enumerate}
\end{defn}
We denote the set of all integrable elements of $\mathcal{P}^{\flat}$
by $\mathcal{P}_{\mathrm{int}}^{\flat}$.
\begin{thm}[{\citep[Theorem 1.10]{Phillips__pos_int_elem}}]
\label{thm:Phillips_root}Let $\eta\in\mathcal{P}^{\flat}$. Then
$\eta$ is integrable if and only if it has a square root $\z\in\mathcal{P}^{\flat}$.
If $\eta\in\Aa'$, then also $\zeta\in\Aa'$, and $\z\z=\eta$.
\end{thm}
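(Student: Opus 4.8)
The statement to prove is Phillips' Theorem 1.10, which I would attack by reducing everything to the operator-algebraic structure of the full left Hilbert algebra $\Aa$ and its associated von Neumann algebra $\pi(\Aa)''$, equipped with its canonical n.s.f.\ weight $\varphi$.

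My plan is as follows. First I would establish the easy direction: if $\eta \in \mathcal{P}^\flat$ has a square root $\z \in \mathcal{P}^\flat$, then for every selfadjoint idempotent $\xi \in \Aa$ one has $\langle \eta, \xi \rangle = \langle \xi, \eta\rangle = \langle \pi(\xi)\z, \z\rangle = \langle \pi(\xi)^2 \z, \z \rangle = \|\pi(\xi)\z\|^2 \geq 0$, using that $\pi(\xi)$ is a projection (so $\pi(\xi)=\pi(\xi)^*=\pi(\xi)^2$). To bound this supremum, the key idea is that selfadjoint idempotents in $\Aa$ correspond to projections $p = \pi(\xi)$ in the von Neumann algebra $M := \pi(\Aa)''$ that lie in $\mathcal{N}_\varphi$, and the square root $\z$ should be thought of as implementing a positive normal functional; the supremum over such projections is then dominated by the norm of that functional, hence finite. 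This shows integrability.

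\medskip

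\emph{The harder direction} is constructing a square root from an integrable $\eta$. Here the strategy is to use $\eta$ to define a weight or positive functional and extract $\z$ via the GNS-type machinery of the left Hilbert algebra. Concretely, integrability of $\eta$ should let me define a positive quadratic form on the left-bounded vectors by $\xi \mapsto \langle \eta, \pi(\xi)^*\xi\rangle$ (nonnegative by Remark~\ref{rem:pos_left_bounded}), and the integrability bound controls this form so that it is implemented by a positive operator. The square root $\z$ is then obtained as the vector representing the ``operator square root'' of the density of $\eta$ relative to $\varphi$; one shows $\z \in \mathcal{P}^\flat$ and checks the defining identity $\langle \xi, \eta\rangle = \langle \pi(\xi)\z,\z\rangle$ for all $\xi \in \Aa$. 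The main obstacle will be precisely this construction: verifying that the quadratic form is closable and that its associated positive operator yields a genuine element $\z$ of the Hilbert space lying in the cone $\mathcal{P}^\flat$, rather than merely an unbounded object. This requires careful use of the integrability hypothesis together with the Tomita--Takesaki modular theory (the modular operator $\Delta$ and conjugation $J$ of $\Aa$) to control domains.

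\medskip

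Finally, for the statement under the additional hypothesis $\eta \in \Aa'$: here $\eta$ is itself a right-bounded vector, so $\pi'(\eta)$ is a bounded operator, and the form $\xi \mapsto \langle \eta, \pi(\xi)^*\xi\rangle$ simplifies considerably. I would verify directly that the square root $\z$ also lies in $\Aa'$ by showing $\pi'(\z)$ is bounded, and then establish the multiplicativity $\z\z = \eta$ (meaning $\pi'(\z)\z = \eta$ in the left Hilbert algebra product). This should follow from the identity $\langle \xi, \eta \rangle = \langle \pi(\xi)\z,\z\rangle = \langle \pi(\xi)\pi'(\z)\z, \z\rangle$ reinterpreted using that $\pi(\xi)$ and $\pi'(\z)$ commute, together with density of $\Aa$ in $\H$ to conclude $\eta = \pi'(\z)\z = \z\z$. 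Since this is exactly the content of \citep[Theorem 1.10]{Phillips__pos_int_elem}, I would at this point simply cite Phillips for the full argument rather than reconstruct every domain estimate, as the paper does.
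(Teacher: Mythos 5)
This statement is imported background: the paper gives no proof of it, quoting it directly from \citep[Theorem 1.10]{Phillips__pos_int_elem}, so there is no in-paper argument to compare against, and your decision to fall back on citing Phillips is exactly the paper's own treatment. Within your sketch, the forward implication is correct and essentially complete: for a selfadjoint idempotent $\xi\in\Aa$ the operator $\pi(\xi)$ is a projection, so $\langle\eta,\xi\rangle=\langle\pi(\xi)\z,\z\rangle=\left\Vert \pi(\xi)\z\right\Vert ^{2}\leq\left\Vert \z\right\Vert ^{2}$, which already gives the uniform bound without any appeal to normal functionals. The converse, which is the real content of Phillips' theorem, remains a plan rather than a proof: you correctly identify that one must show the form $\xi\mapsto\langle\eta,\pi(\xi)^{*}\xi\rangle$ is implemented by a vector $\z$ lying in the cone $\mathcal{P}^{\flat}$, but you explicitly leave the closability and domain arguments unresolved, so as a standalone argument the proposal has a genuine gap there.

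One concrete slip in your final paragraph: the chain $\langle\xi,\eta\rangle=\langle\pi(\xi)\z,\z\rangle=\langle\pi(\xi)\pi'(\z)\z,\z\rangle$ is not the right intermediate step (the second equality has no justification and would force $\pi'(\z)$ to act trivially). The correct route to $\z\z=\eta$ uses the commutation identity $\pi(\xi)\z=\pi'(\z)\xi$ for $\xi$ left-bounded and $\z\in\Aa'$, together with selfadjointness of $\pi'(\z)$ (which holds because elements of $\mathcal{P}^{\flat}\cap\Aa'$ have positive right multiplication operators), to get $\langle\pi(\xi)\z,\z\rangle=\langle\xi,\pi'(\z)^{*}\z\rangle=\langle\xi,\pi'(\z)\z\rangle=\langle\xi,\z\z\rangle$; density of $\Aa$ in $\H$ then yields $\eta=\z\z$. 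Since both you and the paper ultimately defer to Phillips for the full argument, the proposal is acceptable as a gloss on a cited result, but not as a self-contained proof.
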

Moreover, the span of $\mathcal{P}_{\mathrm{int}}^{\flat}$ can be
endowed with a natural norm making it isometrically isomorphic to
a dense subspace of the predual of the (left) von Neumann algebra
$\mathcal{R}_{\ell}(\Aa)$ of $\Aa$ \citep[Theorem 2.9]{Phillips__pos_int_elem}.
In particular, $\eta\in\mathcal{P}_{\mathrm{int}}^{\flat}$ with square
root $\z\in\mathcal{P}^{\flat}$ induces the element $\om_{\z}|_{\mathcal{R}_{\ell}(\Aa)}$
of $\mathcal{R}_{\ell}(\Aa)_{*}$. 

Let $\G$ be a LCQG, and set $\mathcal{J}:=\mathcal{I}\cap\LoneStar{\G}$.
\begin{lem}
\label{lem:J_and_S}Let $x,y\in\Linfty{\G}$. If $(\om_{1}^{*}*\om_{2})^{*}(y)=(\overline{\om_{1}^{*}}*\overline{\om_{2}})(x)$
for every $\om_{1},\om_{2}\in\mathcal{J}$, then $y\in D(S)$ and
$S(y)=x$.\end{lem}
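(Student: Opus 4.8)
The plan is to first rewrite the hypothesis as a single family of scalar identities and then recognise it as the graph condition for $S$. Writing $\theta:=\om_1^{*}*\om_2\in\LoneStar{\G}$ and using that the involution on $\LoneStar{\G}$ is anti-multiplicative, so $(\om_1^{*}*\om_2)^{*}=\om_2^{*}*\om_1$, while $\om\mapsto\overline{\om}$ is multiplicative for the convolution, so $\overline{\om_1^{*}}*\overline{\om_2}=\overline{\om_1^{*}*\om_2}=\overline{\theta}$ (both facts being immediate from $\Delta$ being a $*$-homomorphism), the hypothesis reads $\theta^{*}(y)=\overline{\theta}(x)$ for every $\theta$ in $\Theta:=\linspan\{\om_1^{*}*\om_2:\om_1,\om_2\in\mathcal{J}\}$, a subspace of $\LoneStar{\G}$. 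On the other hand, by the very definition of the involution on $\LoneStar{\G}$ one has $\theta^{*}(b)=\overline{\theta}(S(b))$ for all $b\in D(S)$ and all $\theta\in\LoneStar{\G}$; hence the identity $\theta^{*}(y)=\overline{\theta}(x)$, required to hold for \emph{all} $\theta\in\LoneStar{\G}$, says precisely that $(y,x)$ lies in the graph of $S$. Since $S$ is $*$-ultrastrongly closed and the functionals $\theta\mapsto(\theta^{*},-\overline{\theta})$ are rich enough to recover this graph as their common kernel, the equivalence is essentially how the antipode is characterised in the Kustermans--Vaes theory, and I would invoke it.

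It therefore remains to upgrade the identity $\theta^{*}(y)=\overline{\theta}(x)$ from $\theta\in\Theta$ to all $\theta\in\LoneStar{\G}$. Here I would exploit that both sides are $\Vert\cdot\Vert_{*}$-continuous in $\theta$: indeed $\lvert\theta^{*}(y)\rvert\leq\Vert\theta^{*}\Vert\,\Vert y\Vert\leq\Vert\theta\Vert_{*}\Vert y\Vert$ and $\lvert\overline{\theta}(x)\rvert=\lvert\theta(x^{*})\rvert\leq\Vert\theta\Vert\,\Vert x\Vert\leq\Vert\theta\Vert_{*}\Vert x\Vert$. Consequently the set of $\theta$ for which the identity holds is $\Vert\cdot\Vert_{*}$-closed, and the whole proof reduces to the single assertion that $\Theta$ is $\Vert\cdot\Vert_{*}$-dense in $\LoneStar{\G}$.

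This density is the crux, and I expect it to be the main obstacle. The natural tool is $\l$, which is a $*$-homomorphism on $\LoneStar{\G}$ and which, restricted to $\mathcal{J}=\mathcal{I}\cap\LoneStar{\G}$, lands in $\mathcal{N}_{\hat{\varphi}}$ with $\hat{\Lambda}(\l(\om))=\xi(\om)$; thus $\l(\Theta)=\linspan\{\l(\om_1)^{*}\l(\om_2):\om_1,\om_2\in\mathcal{J}\}$ is built from elements of $\mathcal{N}_{\hat{\varphi}}^{*}\mathcal{N}_{\hat{\varphi}}$, and the fact that $\l(\mathcal{I})$ is a core for $\hat{\Lambda}$ is exactly what should force $\Theta$ to be large. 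The difficulty is that one must control the stronger norm $\Vert\cdot\Vert_{*}$ (equivalently, approximate simultaneously in $\om$ and in $\om^{*}$), and that the usual device of factoring through a bounded approximate identity is unavailable, since $\Lone{\G}$ admits one only when $\G$ is co-amenable. I would instead extract the approximation from the Tomita--Takesaki structure of $\hat{\varphi}$ on $\Ltwo{\G}$, using the core property together with the implementation of the unitary antipode by $R(\cdot)=\hat{J}(\cdot)^{*}\hat{J}$ and of the scaling group by the modular operator $\hat{\nabla}$, and then transport the result back through $\l$ while retaining control of $\Vert\cdot\Vert_{*}$ --- this transport is the delicate point. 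An essentially equivalent line is to fix $\om_1$, let $\om_2$ range over $\mathcal{J}$ (so that, $\{\overline{\om_2}:\om_2\in\mathcal{J}\}$ being $\Vert\cdot\Vert$-total, one obtains $S(\om_1\cdot y)=x\cdot\overline{\om_1^{*}}$ whenever the smoothed element $\om_1\cdot y$ lies in $D(S)$, e.g.\ for $\om_1$ analytic for the scaling group), and then remove the smoothing by a limiting argument resting on the closedness of $S$; this again hinges on producing a suitable net in $\mathcal{J}$ in the absence of a bounded approximate identity, which is where the square-integrability built into $\mathcal{J}$ must do the work.
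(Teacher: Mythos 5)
Your reduction is sound as far as it goes --- the reformulation $\theta^{*}(y)=\overline{\theta}(x)$ for $\theta=\om_{1}^{*}*\om_{2}$ and the $\left\Vert \cdot\right\Vert _{*}$-continuity of both sides in $\theta$ are correct --- but the argument stops exactly where the work begins, and the step you defer is not merely hard: in this paper it lies \emph{downstream} of the lemma you are proving. The $\left\Vert \cdot\right\Vert _{*}$-totality of $\left\{ \om_{1}^{*}*\om_{2}:\om_{1},\om_{2}\in\mathcal{J}\right\} $ in $\LoneStar{\G}$ is precisely Lemma \ref{lem:J_cap_J_star_dense}, and its proof there is a Hahn--Banach duality argument whose key step is an appeal to Lemma \ref{lem:J_and_S} itself: a functional on $(\LoneStar{\G},\left\Vert \cdot\right\Vert _{*})$ vanishing on all such products corresponds to a pair $(x,y)$, and one needs $y\in D(S)$ with $S(y)=-x^{*}$ to conclude that the functional vanishes identically. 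So taking the density as the crux and deducing the lemma from it is circular unless you supply an independent proof of the density, which you do not; your sketch (``extract the approximation from the Tomita--Takesaki structure\dots this transport is the delicate point'') concedes exactly this. The other half of your plan --- that $\theta^{*}(y)=\overline{\theta}(x)$ for \emph{all} $\theta\in\LoneStar{\G}$ forces $(y,x)$ into the graph of $S$ --- is likewise not an invokable characterization of the antipode in the Kustermans--Vaes theory; it is essentially the content of \citep[Lemma 5]{Daws_Salmi__CPD_LCQGs_2013}, whose proof is itself a nontrivial smearing argument.

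The paper's route avoids all of this: it reruns the proof of \citep[Lemma 5]{Daws_Salmi__CPD_LCQGs_2013} verbatim with $\mathcal{J}$ in place of $\LoneStar{\G}$. That argument never requires $\left\Vert \cdot\right\Vert _{*}$-density of anything in $\LoneStar{\G}$; it needs only that the admissible class of functionals is invariant under the adjoint scaling group $\left(\tau_{t}^{*}\right)_{t\in\R}$ (so that the Gaussian smearing stays inside the class) and that $\mathcal{J}$ and $\mathcal{J}^{*}$ are norm dense in $\Lone{\G}$ (so that the smeared identities separate points and the $*$-ultrastrong closedness of $S$ can be applied in the limit). Both facts hold for $\mathcal{J}=\mathcal{I}\cap\LoneStar{\G}$ by \citep[Lemma 2.5 and its proof]{Kustermans_Vaes__LCQG_von_Neumann}. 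Your second, alternative line (fix $\om_{1}$, let $\om_{2}$ vary, smear, and use closedness of $S$) is in fact the right shape of argument, but it must be carried out with these two specific inputs --- norm density in $\Lone{\G}$ and $\tau_{t}^{*}$-invariance --- rather than with a bounded approximate identity or a $\left\Vert \cdot\right\Vert _{*}$-density statement.
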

\begin{proof}
The assertion follows by repeating the argument of \citep[proof of Lemma 5]{Daws_Salmi__CPD_LCQGs_2013}
with $\LoneStar{\G}$ being replaced by $\mathcal{J}$. This is possible
as $\mathcal{I}$, and hence $\mathcal{J}$, are invariant under the
scaling group adjoint $\left(\tau_{t}^{*}\right)_{t\in\R}$, and $\mathcal{J},\mathcal{J}^{*}$
are norm dense in $\Lone{\G}$ \citep[Lemma 2.5 and its proof]{Kustermans_Vaes__LCQG_von_Neumann}.
\end{proof}
We need a slight strengthening of \citep[Theorem 6]{Daws_Salmi__CPD_LCQGs_2013}
and part of \citep[Proposition 2.6]{Kustermans_Vaes__LCQG_von_Neumann}.
\begin{lem}
\label{lem:J_cap_J_star_dense}The set $\left\{ \om_{1}^{*}*\om_{2}:\om_{1},\om_{2}\in\mathcal{J}\right\} $
is total in $(\LoneStar{\G},\left\Vert \cdot\right\Vert _{*})$. Thus
the subspace $\mathcal{J}\cap\mathcal{J}^{*}$ is dense in $(\LoneStar{\G},\left\Vert \cdot\right\Vert _{*})$.\end{lem}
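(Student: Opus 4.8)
The plan is to obtain the second assertion as a formal consequence of the first, and to prove the first by transcribing the proof of \citep[Theorem 6]{Daws_Salmi__CPD_LCQGs_2013}, replacing $\LoneStar{\G}$ by $\mathcal{J}$ at every step. First I would record the algebraic inclusion that legitimises the word ``Thus''. Recall that $\mathcal{I}$ is a left ideal of $\Lone{\G}$: for $\om\in\mathcal{I}$ and $\rho\in\Lone{\G}$ we have $\l(\rho*\om)=\l(\rho)\l(\om)\in\mathcal{N}_{\hat{\varphi}}$, because $\mathcal{N}_{\hat{\varphi}}$ is a left ideal and $\l(\om)\in\mathcal{N}_{\hat{\varphi}}$, whence $\rho*\om\in\mathcal{I}$. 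Thus, for $\om_{1},\om_{2}\in\mathcal{J}$, the element $\om_{1}^{*}*\om_{2}$ lies in $\mathcal{I}$ (since $\om_{2}\in\mathcal{I}$) and in $\LoneStar{\G}$ (a $*$-subalgebra), hence in $\mathcal{J}$; and $(\om_{1}^{*}*\om_{2})^{*}=\om_{2}^{*}*\om_{1}\in\mathcal{J}$ likewise, so $\om_{1}^{*}*\om_{2}\in\mathcal{J}\cap\mathcal{J}^{*}$. Since $\mathcal{J}\cap\mathcal{J}^{*}$ is a linear subspace of $\LoneStar{\G}$ containing $\linspan\{\om_{1}^{*}*\om_{2}:\om_{1},\om_{2}\in\mathcal{J}\}$, totality of the latter set at once yields $\left\Vert \cdot\right\Vert _{*}$-density of the former subspace.

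For the totality itself I would resist the temptation to quote \citep[Theorem 6]{Daws_Salmi__CPD_LCQGs_2013} as a black box and then replace each factor $\rho_{i}\in\LoneStar{\G}$ of an approximating product $\rho_{1}^{*}*\rho_{2}$ by a nearby element of $\mathcal{J}$: approximating $\rho_{1}^{*}*\rho_{2}$ in $\left\Vert \cdot\right\Vert _{*}$ this way forces one to approximate each $\rho_{i}$ in $\left\Vert \cdot\right\Vert _{*}$, which is precisely (a special case of) what is to be proved, and is therefore circular. Instead I would rerun their argument with $\mathcal{J}$ in place of $\LoneStar{\G}$ ab initio. The two structural inputs of that argument are both enjoyed by $\mathcal{J}$: it is invariant under the adjoint scaling group $(\tau_{t}^{*})_{t\in\R}$, and $\mathcal{J},\mathcal{J}^{*}$ are norm dense in $\Lone{\G}$ (see the proof of \prettyref{lem:J_and_S} and \citep[Lemma 2.5]{Kustermans_Vaes__LCQG_von_Neumann}). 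The crucial device is the Gaussian regularisation $\om\mapsto\om(n):=\frac{n}{\sqrt{\pi}}\int_{\R}e^{-n^{2}t^{2}}\,\om\circ\tau_{t}\d t$ of \citep[Proposition 2.6]{Kustermans_Vaes__LCQG_von_Neumann}, which sends $\Lone{\G}$ into $\LoneStar{\G}$ with $\om(n)\to\om$ in norm, and which maps $\mathcal{I}$ into $\mathcal{J}$ (this is exactly how the density of $\mathcal{J}$ is produced in \citep[Lemma 2.5]{Kustermans_Vaes__LCQG_von_Neumann}). Because $(\om\circ\tau_{t})^{*}=\om^{*}\circ\tau_{t}$ and the Gaussian weight is real, the regularisation commutes with the involution, so $\om(n)^{*}=\om^{*}(n)$; hence for $\om\in\LoneStar{\G}$ one even gets $\om(n)\to\om$ in $\left\Vert \cdot\right\Vert _{*}$, and no $\left\Vert \cdot\right\Vert _{*}$-estimate degrades in passing from $\LoneStar{\G}$ to $\mathcal{J}$.

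The heart of the matter is thus to verify that, at each point where the proof of \citep[Theorem 6]{Daws_Salmi__CPD_LCQGs_2013} supplies an approximating functional in $\LoneStar{\G}$, that functional may be taken in $\mathcal{J}$. The functionals produced there are built from the GNS data for $\hat{\varphi}$ and therefore already lie in $\mathcal{I}$; regularising them keeps them in $\mathcal{I}$ and places them in $\LoneStar{\G}$, i.e.\ in $\mathcal{J}$, while the commutation with the involution guarantees that the resulting $\left\Vert \cdot\right\Vert _{*}$-estimates coincide with theirs. I expect this bookkeeping---checking square-integrability (membership in $\mathcal{I}$) of the regularised building blocks, and that the error control survives the passage from $\left\Vert \cdot\right\Vert $ to $\left\Vert \cdot\right\Vert _{*}$---to be the only genuine obstacle; the remainder is a faithful transcription of their proof.
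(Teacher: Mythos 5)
Your first paragraph --- the containment $\left\{ \om_{1}^{*}*\om_{2}:\om_{1},\om_{2}\in\mathcal{J}\right\} \subseteq\mathcal{J}\cap\mathcal{J}^{*}$ via the left-ideal property of $\mathcal{I}$, and the deduction of the second assertion from the first --- is correct and is exactly how the paper handles that part.

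The gap is in the totality claim, which is the substance of the lemma and which you never actually prove: your plan for it rests on a mischaracterization of the proof of \citep[Theorem 6]{Daws_Salmi__CPD_LCQGs_2013}. That proof is not a constructive approximation by ``functionals built from the GNS data'', and there is no family of approximants whose membership in $\mathcal{I}$ needs to be checked; it is a Hahn--Banach duality argument. Concretely: since $(\LoneStar{\G},\left\Vert \cdot\right\Vert _{*})$ embeds isometrically into $\Lone{\G}\oplus_{\infty}\Lone{\G}$ via $\om\mapsto(\om,\om^{*})$, every bounded functional on it is implemented by a pair $x,y\in\Linfty{\G}$ acting as $\om\mapsto\om(x)+\overline{\om^{*}(y)}$. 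If such a functional annihilates every $\om_{1}^{*}*\om_{2}$ with $\om_{1},\om_{2}\in\mathcal{J}$, the vanishing rewrites as $(\om_{1}^{*}*\om_{2})^{*}(y)=(\overline{\om_{1}^{*}}*\overline{\om_{2}})(-x^{*})$ for all such $\om_{1},\om_{2}$, and \prettyref{lem:J_and_S} then yields $y\in D(S)$ with $S(y)=-x^{*}$; hence $\overline{\om^{*}(y)}=-\om(x)$ for \emph{every} $\om\in\LoneStar{\G}$ and the functional vanishes identically, so no nonzero annihilator exists and the set is total. All of the adaptation from $\LoneStar{\G}$ to $\mathcal{J}$ is thus absorbed into \prettyref{lem:J_and_S} (the analogue of \citep[Lemma 5]{Daws_Salmi__CPD_LCQGs_2013}); the $\tau_{t}^{*}$-invariance of $\mathcal{J}$ and the norm density of $\mathcal{J}$ and $\mathcal{J}^{*}$ in $\Lone{\G}$, which you correctly single out, are the hypotheses needed \emph{there}, not in the totality step itself. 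In particular your worry about circularity, and the proposed ``bookkeeping'' of $\left\Vert \cdot\right\Vert _{*}$-estimates for Gaussian-regularized functionals, are beside the point: the duality argument only ever evaluates the putative annihilator on elements of the candidate total set and never approximates anything in $\left\Vert \cdot\right\Vert _{*}$.
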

\begin{proof}
Since $\mathcal{I}$ is a left ideal \citep[Lemma 4.8]{Van_Daele__LCQGs},
$\left\{ \om_{1}^{*}*\om_{2}:\om_{1},\om_{2}\in\mathcal{J}\right\} $
is contained in $\mathcal{J}\cap\mathcal{J}^{*}$. Adapting the argument
of \citep[proof of Theorem 6]{Daws_Salmi__CPD_LCQGs_2013}, if $\left\{ \om_{1}^{*}*\om_{2}:\om_{1},\om_{2}\in\mathcal{J}\right\} $
were not total in $(\LoneStar{\G},\left\Vert \cdot\right\Vert _{*})$,
then there would be $x,y\in\Linfty{\G}$ such that 
\[
0=(\om_{1}^{*}*\om_{2})(x)+\overline{(\om_{1}^{*}*\om_{2})^{*}(y)},
\]
that is, $(\om_{1}^{*}*\om_{2})^{*}(y)=(\overline{\om_{1}^{*}}*\overline{\om_{2}})(-x^{*})$,
for every $\om_{1},\om_{2}\in\mathcal{J}$. \prettyref{lem:J_and_S}
gives that $y\in D(S)$ and $S(y)=-x^{*}$, and hence the element
of $(\LoneStar{\G},\left\Vert \cdot\right\Vert _{*})^{*}$ corresponding
to $(x,\overline{y})$ is zero.
\end{proof}
Considering the full left Hilbert algebra $\Aa_{\hat{\varphi}}$ associated
with the left-invariant weight $\hat{\varphi}$ of $\hat{\G}$, we
let $\mathcal{P}_{\hat{\varphi}}^{\flat}$ stand for the corresponding
cone.
\begin{lem}
\label{lem:defs_of_PD}Let $x\in\Linfty{\G}$. If $x\in\mathcal{N}_{\varphi}$,
then $x$ is a positive-definite function if and only if $\Lambda(x)\in\mathcal{P}_{\hat{\varphi}}^{\flat}$. \end{lem}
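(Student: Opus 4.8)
The plan is to rewrite both sides of the asserted equivalence as the single positivity requirement $\langle\Lambda(x),\xi^{\sharp}\xi\rangle\geq0$ for $\xi$ running over an appropriate family of vectors in $\Ltwo{\G}$, and then to match the two families. First I would observe that positive definiteness of $x$ may be tested on a dense subalgebra: by the definition of a positive-definite function, \prettyref{lem:J_cap_J_star_dense}, and the continuity of $\om\mapsto(\om^{*}*\om)(x^{*})$ on the Banach $*$-algebra $(\LoneStar{\G},\left\Vert \cdot\right\Vert _{*})$, the element $x$ is positive definite if and only if $(\om^{*}*\om)(x^{*})\geq0$ for every $\om\in\mathcal{J}\cap\mathcal{J}^{*}$. (One implication is trivial; for the other, given $\om\in\LoneStar{\G}$ pick $\om_{n}\in\mathcal{J}\cap\mathcal{J}^{*}$ with $\left\Vert \om_{n}-\om\right\Vert _{*}\to0$, so that $\om_{n}^{*}*\om_{n}\to\om^{*}*\om$ in $\Lone{\G}$ and the values at $x^{*}$ converge.)

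The heart of the matter is the following computation. Fix $\om\in\mathcal{J}\cap\mathcal{J}^{*}$ and set $a:=\l(\om)\in\Cz{\hat{\G}}$. Since $\mathcal{I}$ is a left ideal and $\om\in\mathcal{I}$, we have $\om^{*}*\om\in\mathcal{I}$; and since $\l|_{\LoneStar{\G}}$ is a $*$-homomorphism, $\l(\om^{*}*\om)=\l(\om)^{*}\l(\om)=a^{*}a$. Hence $\xi(\om^{*}*\om)=\hat{\Lambda}(a^{*}a)=a^{*}\hat{\Lambda}(a)$, and the defining property of $\mathcal{I}$ yields, for $x\in\mathcal{N}_{\varphi}$, that $(\om^{*}*\om)(x^{*})=\langle a^{*}\hat{\Lambda}(a),\Lambda(x)\rangle=\overline{\langle\Lambda(x),a^{*}\hat{\Lambda}(a)\rangle}$. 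Now put $\xi:=\hat{\Lambda}(a)=\xi(\om)$; because $\om\in\mathcal{I}$ this vector is left-bounded with $\pi(\xi)=a$, and because both $\om$ and $\om^{*}$ lie in $\mathcal{J}$ it in fact lies in the left Hilbert algebra $\Aa_{\hat{\varphi}}$ with $\xi^{\sharp}=\hat{\Lambda}(a^{*})$, so that $\xi^{\sharp}\xi=\pi(\xi)^{*}\xi=a^{*}\hat{\Lambda}(a)$. The displayed identity therefore reads $(\om^{*}*\om)(x^{*})=\overline{\langle\Lambda(x),\xi^{\sharp}\xi\rangle}$, whence $(\om^{*}*\om)(x^{*})\geq0$ if and only if $\langle\Lambda(x),\xi^{\sharp}\xi\rangle\geq0$.

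Combining the two steps, $x$ is positive definite if and only if $\langle\Lambda(x),\xi^{\sharp}\xi\rangle\geq0$ for every $\xi$ in the $*$-subalgebra $\Aa_{0}:=\hat{\Lambda}(\l(\mathcal{J}\cap\mathcal{J}^{*}))$ of $\Aa_{\hat{\varphi}}$, while $\Lambda(x)\in\mathcal{P}_{\hat{\varphi}}^{\flat}$ asks the same of every $\xi\in\Aa_{\hat{\varphi}}$ (\prettyref{def:P_flat}). The implication $\Lambda(x)\in\mathcal{P}_{\hat{\varphi}}^{\flat}\Rightarrow x$ positive definite is then immediate, since $\Aa_{0}\subseteq\Aa_{\hat{\varphi}}$; in fact one may bypass $\Aa_{0}$ and simply feed the left-bounded vectors $\xi(\om)$, $\om\in\mathcal{J}$, into \prettyref{rem:pos_left_bounded}. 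The content lies in the converse: passing from positivity on $\Aa_{0}$ to positivity on all of $\Aa_{\hat{\varphi}}$. I would carry this out by verifying that $\Aa_{0}$ is a \emph{core} for $\Aa_{\hat{\varphi}}$ --- this is built into the Kustermans--Vaes construction of $\hat{\varphi}$ recalled above (that $\l(\mathcal{I})$ is a core for $\hat{\Lambda}$), upgraded to $\mathcal{J}\cap\mathcal{J}^{*}$ by \prettyref{lem:J_cap_J_star_dense} --- and then invoking that the cone $\mathcal{P}^{\flat}$ is unaffected by replacing a left Hilbert algebra with its achievement, so that it may be computed on any core \citep{Takesaki__book_vol_2,Kustermans_Vaes__LCQG_von_Neumann}.

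This last extension is the main obstacle, and it is genuinely delicate because the defining expression $\xi^{\sharp}\xi=\pi(\xi)^{*}\xi$ depends on $\xi$ through the unbounded assignment $\xi\mapsto\pi(\xi)$: a plain approximation $\xi_{n}\to\xi$, $\xi_{n}^{\sharp}\to\xi^{\sharp}$ in norm need not give $\pi(\xi_{n})^{*}\xi_{n}\to\pi(\xi)^{*}\xi$. I expect to handle it by arranging a \emph{bounded} core approximation: for $\xi=\hat{\Lambda}(b)\in\Aa_{\hat{\varphi}}$ choose $\om_{n}\in\mathcal{J}\cap\mathcal{J}^{*}$ with $\l(\om_{n})\to b$ $*$-strongly, $\sup_{n}\left\Vert \l(\om_{n})\right\Vert <\infty$, and $\hat{\Lambda}(\l(\om_{n}))\to\hat{\Lambda}(b)$ in norm; then the splitting $\pi(\xi_{n})^{*}\xi_{n}-\pi(\xi)^{*}\xi=(\pi(\xi_{n})-\pi(\xi))^{*}\xi_{n}+\pi(\xi)^{*}(\xi_{n}-\xi)$ converges to $0$ and positivity passes to the limit. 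Securing the uniform bound, rather than a mere core, is the crux; should it prove awkward, the alternative is to identify $\mathcal{P}_{\hat{\varphi}}^{\flat}$ with the intrinsic cone attached to the standard form of $\Linfty{\hat{\G}}$ determined by $\hat{\varphi}$, which is manifestly independent of the chosen core.
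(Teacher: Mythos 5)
Your proposal is correct and follows essentially the same route as the paper: reduce positive definiteness to testing on $\mathcal{J}$ via \prettyref{lem:J_cap_J_star_dense}, identify $(\om^{*}*\om)(x^{*})$ with $\langle\hat{y}^{*}\hat{\Lambda}(\hat{y}),\Lambda(x)\rangle$ for $\hat{y}=\l(\om)$, and pass from $\l(\mathcal{J})$ to all of $\mathcal{N}_{\hat{\varphi}}$ (via \prettyref{rem:pos_left_bounded}) by a core argument. The ``crux'' you flag --- a core approximation with uniformly bounded operators so that $\pi(\xi_{n})^{*}\xi_{n}\to\pi(\xi)^{*}\xi$ --- is precisely what the paper's citation of \citep[Lemma 2.5]{Kustermans_Vaes__LCQG_von_Neumann} supplies, so your argument closes.
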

\begin{proof}
By definition, $x$ is positive definite if and only if $(\om^{*}*\om)(x^{*})\geq0$
for every $\om\in\LoneStar{\G}$. From \prettyref{lem:J_cap_J_star_dense},
it suffices to check this for $\om\in\mathcal{J}$. But if $\om\in\mathcal{J}$,
then also $\om^{*}*\om\in\mathcal{J}$ and for $\hat{y}:=\l(\om)$
we have $\hat{y}^{*}\hat{y}=\l(\om^{*}*\om)$ and 
\[
(\om^{*}*\om)(x^{*})=\langle\hat{\Lambda}(\l(\om^{*}*\om)),\Lambda(x)\rangle=\langle\hat{y}^{*}\hat{\Lambda}(\hat{y}),\Lambda(x)\rangle.
\]
By \prettyref{rem:pos_left_bounded}, $\Lambda(x)\in\mathcal{P}_{\hat{\varphi}}^{\flat}$
if and only if $\langle\Lambda(x),\hat{y}^{*}\hat{\Lambda}(\hat{y})\rangle\geq0$
for every $\hat{y}\in\mathcal{N}_{\hat{\varphi}}$. Using \citep[Lemma 2.5]{Kustermans_Vaes__LCQG_von_Neumann},
that is equivalent to $\langle\Lambda(x),\hat{y}^{*}\hat{\Lambda}(\hat{y})\rangle\geq0$
for every $\hat{y}\in\l(\mathcal{J})$. This completes the proof.\end{proof}
\begin{prop}
\label{prop:approx_iden_special_features}Let $\G$ be a co-amenable
LCQG. There exists a contractive approximate identity for $(\LoneStar{\G},\left\Vert \cdot\right\Vert _{*})$
in $\mathcal{J}\cap\mathcal{J}^{*}$.\end{prop}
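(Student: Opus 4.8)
The plan is to reduce the statement to the existence of a \emph{self-adjoint}, $\left\Vert \cdot\right\Vert $-contractive, two-sided approximate identity for $(\Lone{\G},*)$ lying in $\mathcal{J}\cap\mathcal{J}^{*}$, and then to produce such a net from co-amenability. The reduction rests on the following observation. Suppose $(e_{i})\subseteq\mathcal{J}\cap\mathcal{J}^{*}$ satisfies $e_{i}^{*}=e_{i}$, $\left\Vert e_{i}\right\Vert \leq1$, and $\left\Vert e_{i}*\om-\om\right\Vert \to0$, $\left\Vert \om*e_{i}-\om\right\Vert \to0$ for every $\om\in\Lone{\G}$. Then $\left\Vert e_{i}\right\Vert _{*}=\max(\left\Vert e_{i}\right\Vert ,\left\Vert e_{i}^{*}\right\Vert )=\left\Vert e_{i}\right\Vert \leq1$, so the net is contractive in $\left\Vert \cdot\right\Vert _{*}$; and for $\om\in\LoneStar{\G}$ one has $(e_{i}*\om-\om)^{*}=\om^{*}*e_{i}^{*}-\om^{*}=\om^{*}*e_{i}-\om^{*}$, whose $\left\Vert \cdot\right\Vert $-norm tends to $0$ since $\om^{*}\in\Lone{\G}$. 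Combined with $\left\Vert e_{i}*\om-\om\right\Vert \to0$ this yields $\left\Vert e_{i}*\om-\om\right\Vert _{*}\to0$, and symmetrically $\left\Vert \om*e_{i}-\om\right\Vert _{*}\to0$ via $(\om*e_{i}-\om)^{*}=e_{i}*\om^{*}-\om^{*}$. Hence any such $(e_{i})$ is automatically a contractive approximate identity for $(\LoneStar{\G},\left\Vert \cdot\right\Vert _{*})$.

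It then remains to build $(e_{i})$. First, co-amenability furnishes a two-sided approximate identity $(u_{i})$ for $\Lone{\G}$ consisting of states of $\Linfty{\G}$, so $\left\Vert u_{i}\right\Vert =1$ (see \citep[Theorem 3.1]{Bedos_Tuset_2003} and \citep{Desmedt_Quaegebeur_Vaes}). I would then replace each $u_{i}$ by a self-adjoint element of $\mathcal{J}\cap\mathcal{J}^{*}$ of $\left\Vert \cdot\right\Vert $-norm at most $1$ without losing the approximate-identity property. The device for generating self-adjoint elements of $\mathcal{J}\cap\mathcal{J}^{*}$ with controlled norm is to use ``squares'': for $\om\in\mathcal{J}$ the element $\om^{*}*\om$ lies in $\mathcal{J}\cap\mathcal{J}^{*}$, is self-adjoint, and satisfies $\left\Vert \om^{*}*\om\right\Vert _{*}=\left\Vert \om^{*}*\om\right\Vert $. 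Concretely, I would index a net over pairs $(F,\e)$ with $F\subseteq\Lone{\G}$ finite and $\e>0$: pick $u_{i}$ that is $\e$-close to acting as a unit on $F$, approximate it inside $\mathcal{J}\cap\mathcal{J}^{*}$ using \prettyref{lem:J_cap_J_star_dense}, and pass to suitably normalized self-adjoint squares of the form $\om^{*}*\om$, so as to keep both norms at most $1$ while retaining approximate-unit behaviour on $F$.

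The main obstacle is precisely the control of the $*$-norm during this replacement, and it is what makes the statement nontrivial: the involution on $\Lone{\G}$ is in general \emph{not} $\left\Vert \cdot\right\Vert $-isometric (this is the very reason for introducing $\left\Vert \cdot\right\Vert _{*}$), so the naive symmetrization $\tfrac{1}{2}(w+w^{*})$ of a $\mathcal{J}\cap\mathcal{J}^{*}$-approximant $w$ of $u_{i}$ gives no bound on $\left\Vert w^{*}\right\Vert $ and may inflate the norm well past $1$. This forces the construction through the self-adjoint squares $\om^{*}*\om$, whose $\left\Vert \cdot\right\Vert _{*}$-norm equals their $\left\Vert \cdot\right\Vert $-norm by construction; the price is that one must verify that a suitably chosen, normalized net of such squares still approximates the identity. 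Here the totality of $\{\om_{1}^{*}*\om_{2}:\om_{1},\om_{2}\in\mathcal{J}\}$ in $(\LoneStar{\G},\left\Vert \cdot\right\Vert _{*})$ from \prettyref{lem:J_cap_J_star_dense}, together with the $\left\Vert \cdot\right\Vert $-approximate-identity property of the states $(u_{i})$, is what lets one pin down the required net; once it is in place, the reduction of the first paragraph completes the proof.
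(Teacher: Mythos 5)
Your reduction in the first paragraph is correct: a self-adjoint, $\left\Vert \cdot\right\Vert$-contractive two-sided approximate identity for $(\Lone{\G},*)$ lying in $\mathcal{J}\cap\mathcal{J}^{*}$ is automatically a contractive approximate identity for $(\LoneStar{\G},\left\Vert \cdot\right\Vert _{*})$, since $\LoneStar{\G}$ is a subalgebra, the involution is an anti-homomorphism, and $\left\Vert e_{i}\right\Vert _{*}=\left\Vert e_{i}\right\Vert$ for self-adjoint $e_{i}$. The problem is that the construction of such a net is never actually carried out, and the step where you leave it open is exactly the hard part. You correctly identify that the involution is not $\left\Vert \cdot\right\Vert$-isometric and that naive symmetrization fails, but your proposed remedy --- ``suitably normalized self-adjoint squares $\om^{*}*\om$'' --- is not supported by any argument. \prettyref{lem:J_cap_J_star_dense} is a statement about the \emph{closed linear span} of $\{\om_{1}^{*}*\om_{2}\}$ in $(\LoneStar{\G},\left\Vert \cdot\right\Vert _{*})$; it gives you no way to produce a single element $\om^{*}*\om$ of norm at most one that acts approximately as a unit on a prescribed finite set, nor any relation between such squares and the states $u_{i}$ (which, incidentally, need not lie in $\LoneStar{\G}$ at all, so approximating them ``inside $\mathcal{J}\cap\mathcal{J}^{*}$ using \prettyref{lem:J_cap_J_star_dense}'' is not available in the norm where you need it). Normalizing $\om^{*}*\om$ by its norm can moreover destroy the approximate-unit behaviour unless that norm is already close to $1$, which again is not established. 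In effect you are sketching a re-proof of \citep[Theorem 13]{Daws_Salmi__CPD_LCQGs_2013} without its essential ingredient, the smearing of the states $u_{i}$ by the scaling group, which is what simultaneously places them in $\LoneStar{\G}$ and controls $\left\Vert \cdot\right\Vert _{*}$.

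The paper's proof is much shorter and avoids this entirely: it quotes \citep[Theorem 13]{Daws_Salmi__CPD_LCQGs_2013}, which already provides a contractive approximate identity for $(\LoneStar{\G},\left\Vert \cdot\right\Vert _{*})$ under co-amenability, and then uses the density of $\mathcal{J}\cap\mathcal{J}^{*}$ in $(\LoneStar{\G},\left\Vert \cdot\right\Vert _{*})$ from \prettyref{lem:J_cap_J_star_dense} to perturb that approximate identity into $\mathcal{J}\cap\mathcal{J}^{*}$ by a standard approximation (and renormalization) argument. If you want a self-contained proof along your lines, you would have to reproduce the smearing construction from Daws--Salmi; otherwise the clean route is to cite their theorem and only add the density step.
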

\begin{proof}
By \citep[Theorem 13]{Daws_Salmi__CPD_LCQGs_2013}, $(\LoneStar{\G},\left\Vert \cdot\right\Vert _{*})$
has a contractive approximate identity. Combining this with \prettyref{lem:J_cap_J_star_dense},
the assertion is proved.
\end{proof}
The following result generalizes \citep[Theorem 1.6]{Phillips__pos_int_elem},
saying that if $G$ is a locally compact group and $f\in\Ltwo G$
is positive definite and essentially bounded on a neighborhood of
the identity, then it belongs to $A(G)$.
\begin{cor}
\label{cor:pos_def_L2_imply_integrable}Let $\G$ be a co-amenable
LCQG. If $x\in\mathcal{N}_{\varphi}$ and $x$ is positive definite,
then $\Lambda(x)$ is integrable with respect to $\Aa_{\hat{\varphi}}$
(see \prettyref{def:integrable_and_root}). \end{cor}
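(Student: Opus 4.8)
The plan is to read the integrability condition of \prettyref{def:integrable_and_root} through the identification of the self-adjoint idempotents of $\Aa_{\hat\varphi}$ with the square-integrable projections of $\Linfty{\hat\G}$, and to bound the relevant supremum by $\left\Vert x\right\Vert$. First, \prettyref{lem:defs_of_PD} gives $\Lambda(x)\in\mathcal{P}_{\hat\varphi}^{\flat}$, so integrability is meaningful for it. Next I would recall the standard fact (see \citep[Chapter VI]{Takesaki__book_vol_2}) that the self-adjoint idempotents $\xi$ of the full left Hilbert algebra $\Aa_{\hat\varphi}$ are exactly the vectors $\xi=\hat\Lambda(\hat p)$, where $\hat p$ is a projection in $\Linfty{\hat\G}$ with $\hat\varphi(\hat p)<\infty$ (so $\hat p\in\mathcal{N}_{\hat\varphi}$) and $\pi(\xi)=\hat p$. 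Thus, writing $F(\hat p):=\langle\Lambda(x),\hat\Lambda(\hat p)\rangle$, it suffices to prove $F(\hat p)\leq\left\Vert x\right\Vert$ for every such $\hat p$. One checks immediately that $F(\hat p)\geq0$ and that $F$ is monotone: if $\hat p\leq\hat q$ then $\hat q-\hat p$ is again a square-integrable projection, so $F(\hat q)-F(\hat p)=\langle\Lambda(x),\hat\Lambda(\hat q-\hat p)\rangle\geq0$ by \prettyref{rem:pos_left_bounded}, since $\hat\Lambda(\hat q-\hat p)=\pi(\hat\Lambda(\hat q-\hat p))^{*}\hat\Lambda(\hat q-\hat p)$.

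To compute $F(\hat p)$ I bring in the Fourier--Stieltjes picture. As $\G$ is co-amenable and $x$ is positive definite, \prettyref{thm:pos_def} furnishes $\hat\mu\in\CzU{\hat\G}_{+}^{*}$ with $x=(\i\tensor\hat\mu)(\wW^{*})$, and \prettyref{rem:CPD_norm} gives $\left\Vert \hat\mu\right\Vert =\epsilon(x)=\left\Vert x\right\Vert$; since $\hat\mu$ is positive we have $x^{*}=(\i\tensor\hat\mu)(\wW)$. For $\om\in\mathcal{J}$ one has $\hat\Lambda(\l(\om))=\xi(\om)$ and $\om(x^{*})=\langle\xi(\om),\Lambda(x)\rangle$, while $\l_{\mathrm u}(\om)=(\om\tensor\i)(\wW)$, so
\[
\langle\Lambda(x),\hat\Lambda(\l(\om))\rangle=\overline{\om(x^{*})}=\overline{(\om\tensor\hat\mu)(\wW)}=\overline{\hat\mu(\l_{\mathrm u}(\om))}.
\]
Approximating a given square-integrable projection $\hat p$ along the core $\l(\mathcal{J})$ for $\hat\Lambda$ (by \citep[Lemma 2.5]{Kustermans_Vaes__LCQG_von_Neumann} together with \prettyref{lem:J_cap_J_star_dense}), one then obtains $F(\hat p)=\lim_{n}\overline{\hat\mu(\l_{\mathrm u}(\om_{n}))}$ for suitable $\om_{n}\in\mathcal{J}$ with $\l(\om_{n})\to\hat p$ and $\hat\Lambda(\l(\om_{n}))\to\hat\Lambda(\hat p)$.

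The main obstacle is that this formula is expressed through the \emph{universal} functional $\hat\mu$: the naive estimate $\left|\hat\mu(\l_{\mathrm u}(\om_{n}))\right|\leq\left\Vert \hat\mu\right\Vert \left\Vert \l_{\mathrm u}(\om_{n})\right\Vert _{\CzU{\hat\G}}$ is useless, because the universal norms $\left\Vert \l_{\mathrm u}(\om_{n})\right\Vert$ are not controlled by $\left\Vert \l(\om_{n})\right\Vert \to\left\Vert \hat p\right\Vert =1$. I intend to close this gap using the hypothesis $x\in\mathcal{N}_{\varphi}$, i.e.\ $\Lambda(x)\in\Ltwo{\G}$. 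Let $(\H_{\hat\mu},\pi_{\hat\mu},\Xi)$ be the GNS construction for $\hat\mu$ and $U:=(\i\tensor\pi_{\hat\mu})(\wW)$ the associated cyclic unitary corepresentation, so that $x^{*}=(\i\tensor\om_{\Xi})(U)$ and $\left\Vert \Xi\right\Vert ^{2}=\hat\mu(\one)=\left\Vert x\right\Vert$. The point is that the coefficient functional $\om\mapsto\om(x^{*})=\langle(\om\tensor\i)(U)\Xi,\Xi\rangle$ is implemented by the \emph{square-integrable} vector $\Lambda(x)$; I would turn this into a bounded intertwiner $T\colon\H_{\hat\mu}\to\Ltwo{\G}$ from $U$ into the regular corepresentation $W$, with $T\Xi$ essentially $\Lambda(x)$, forcing $\pi_{\hat\mu}$ to factor through the reduced algebra $\Linfty{\hat\G}$ acting on $\Ltwo{\G}$. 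Once this factorization holds, the projection $\hat p\in\Linfty{\hat\G}$ may be applied inside the representation, giving $F(\hat p)=\langle\pi_{\hat\mu}(\hat p)\Xi,\Xi\rangle$, and from $0\leq\hat p\leq\one$ we conclude $F(\hat p)\leq\left\Vert \Xi\right\Vert ^{2}=\left\Vert x\right\Vert$. Hence the supremum in \prettyref{def:integrable_and_root} is at most $\left\Vert x\right\Vert <\infty$, so $\Lambda(x)$ is integrable. I expect the construction of the intertwiner $T$ --- equivalently, the passage from the universal to the reduced level forced by square-integrability --- to be the technical heart of the argument.
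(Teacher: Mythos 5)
Your reduction of integrability to the bound $\langle\Lambda(x),\hat{\Lambda}(\hat{p})\rangle\leq\Vert x\Vert$ over projections $\hat{p}\in\Linfty{\hat{\G}}$ with $\hat{\varphi}(\hat{p})<\infty$ is a correct reading of \prettyref{def:integrable_and_root}, but the step you yourself call the ``technical heart'' is a genuine gap, and not a routine one. You never construct the intertwiner $T$, and nothing in your write-up yields its boundedness: what you need is precisely that the GNS representation $\pi_{\hat{\mu}}$ of $\CzU{\hat{\G}}$ is quasi-contained in the identity representation of $\Cz{\hat{\G}}$ on $\Ltwo{\G}$, so that it extends to a \emph{normal} representation of $\Linfty{\hat{\G}}$ and the expression $\langle\pi_{\hat{\mu}}(\hat{p})\Xi,\Xi\rangle$ acquires a meaning. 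For a square-integrable positive-definite element, that quasi-containment is essentially the substance of Godement's theorem itself (\prettyref{thm:Godement}; classically it is the nontrivial half of \citep[Section 13.8]{Dixmier__C_star_English}), so your argument defers the whole difficulty of the section to an unproven assertion. The estimate you do have, $\left|\hat{\mu}(\l_{\mathrm{u}}(\om))\right|\leq\Vert\Lambda(x)\Vert\,\Vert\hat{\Lambda}(\l(\om))\Vert$ for $\om\in\mathcal{I}$, points in the wrong direction: it does not produce the domination $\hat{\mu}(a^{*}a)\lesssim\Vert\hat{\pi}_{\mathrm{u}}(a)\Vert^{2}$ that a factorization through the reduced level would require. (Even granting the factorization, passing from $\lim_{n}\hat{\mu}(\l_{\mathrm{u}}(\om_{n}))$ to $\langle\pi_{\hat{\mu}}(\hat{p})\Xi,\Xi\rangle$ needs a \emph{bounded} core approximation of $\hat{p}$, another detail left open.)

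The paper's proof avoids representation theory entirely. It takes a contractive approximate identity $\left(\epsilon_{i}\right)$ for $(\LoneStar{\G},\left\Vert\cdot\right\Vert_{*})$ lying in $\mathcal{J}\cap\mathcal{J}^{*}$ (\prettyref{prop:approx_iden_special_features} --- this is where co-amenability enters), sets $\xi_{i}:=\hat{\Lambda}(\l(\epsilon_{i}))$, and computes $\langle\Lambda(x),\xi_{i}^{\sharp}\xi_{i}\rangle=\overline{(\epsilon_{i}^{*}*\epsilon_{i})(x^{*})}$, which is bounded by $\Vert\epsilon_{i}^{*}\Vert\,\Vert\epsilon_{i}\Vert\,\Vert x\Vert\leq\Vert x\Vert$ using nothing beyond the boundedness of $x$; since $\l(\epsilon_{i})\to\one$ strongly, \citep[Proposition 1.5]{Phillips__pos_int_elem} then gives integrability without ever testing against arbitrary selfadjoint idempotents, and without invoking the Fourier--Stieltjes representation of $x$. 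If you wish to salvage your route you must first prove the quasi-containment, which amounts to establishing the main theorem by independent means; as it stands, the proposal does not constitute a proof.
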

\begin{proof}
Let $\left(\epsilon_{i}\right)$ be a contractive approximate identity
for $(\LoneStar{\G},\left\Vert \cdot\right\Vert _{*})$ in $\mathcal{J}\cap\mathcal{J}^{*}$.
Then letting $\xi_{i}:=\hat{\Lambda}(\l(\epsilon_{i}))$, we get a
net $\left(\xi_{i}\right)$ in the left Hilbert algebra $\Aa_{\hat{\varphi}}$.
Since $x\in\mathcal{N}_{\varphi}$, we have for every $i$, 
\[
\langle\Lambda(x),\xi_{i}^{\sharp}\xi_{i}\rangle=\langle\Lambda(x),\hat{\Lambda}(\l(\epsilon_{i}^{*}*\epsilon_{i}))\rangle=\overline{\langle\hat{\Lambda}(\l(\epsilon_{i}^{*}*\epsilon_{i})),\Lambda(x)\rangle}=\overline{(\epsilon_{i}^{*}*\epsilon_{i})(x^{*})},
\]
and so $\langle\Lambda(x),\xi_{i}^{\sharp}\xi_{i}\rangle\leq\left\Vert \epsilon_{i}^{*}\right\Vert \left\Vert \epsilon_{i}\right\Vert \left\Vert x\right\Vert \leq\left\Vert x\right\Vert $.
Since $\Lambda(x)\in\mathcal{P}_{\hat{\varphi}}^{\flat}$ by \prettyref{lem:defs_of_PD}
and $\left(\l(\epsilon_{i})\right)$ converges strongly to $\one$
(for $\LoneStar{\G}$ is dense in $\Lone{\G}$), \citep[Proposition 1.5]{Phillips__pos_int_elem}
applies, and yields that $\Lambda(x)$ is integrable with respect
to $\Aa_{\hat{\varphi}}$.
\end{proof}
We now prove the main result of this section, generalizing a theorem
of Godement \citep[Th\'{e}or\`{e}me 17]{Godement__positive_def_func}.
\begin{thm}
\label{thm:Godement}Let $\G$ be a co-amenable LCQG. If $x\in\mathcal{N}_{\varphi}$
and $x$ is positive definite, then $\Lambda(x)$ has a square root
in $\mathcal{P}_{\hat{\varphi}}^{\flat}$ (\prettyref{def:integrable_and_root});
equivalently, there exists $\z\in\mathcal{P}_{\hat{\varphi}}^{\flat}$
such that $x=\hat{\l}(\hat{\om}_{\z})$. If, additionally, $\Lambda(x)\in\Aa_{\hat{\varphi}}'$,
then also $\z\in\Aa_{\hat{\varphi}}'$, in which case $\Lambda(x)=\hat{\pi}'(\z)\z$.
That is, if $\hat{w}\in\mathcal{N}_{\hat{\varphi}}$ is positive and
$\hat{J}\hat{\Lambda}(\hat{w})\in\Lambda(\mathcal{N}_{\varphi})$,
then the (positive) square root of $\hat{w}$ also belongs to $\mathcal{N}_{\hat{\varphi}}$.\end{thm}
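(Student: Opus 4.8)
The plan is to reduce everything to Phillips's theorem (\prettyref{thm:Phillips_root}) for the full left Hilbert algebra $\Aa_{\hat{\varphi}}$, whose left von Neumann algebra is $\Linfty{\hat{\G}}$. The first assertion is then immediate: \prettyref{cor:pos_def_L2_imply_integrable} shows that $\Lambda(x)\in\mathcal{P}_{\hat{\varphi}}^{\flat}$ is integrable, so \prettyref{thm:Phillips_root} produces a square root $\z\in\mathcal{P}_{\hat{\varphi}}^{\flat}$.

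For the equivalent description $x=\hat{\l}(\hat{\om}_{\z})$, recall from \citep[Theorem 2.9]{Phillips__pos_int_elem} (quoted after \prettyref{thm:Phillips_root}) that $\z$ induces the normal functional $\hat{\om}_{\z}:=\om_{\z}|_{\Linfty{\hat{\G}}}\in\Lone{\hat{\G}}$, using $\mathcal{R}_{\ell}(\Aa_{\hat{\varphi}})=\Linfty{\hat{\G}}$. Writing the defining property of a square root with $\xi$ a left-bounded vector and $\hat{y}:=\pi(\xi)$, so that $\hat{\Lambda}(\hat{y})=\xi$, gives $\hat{\om}_{\z}(\hat{y})=\langle\hat{y}\z,\z\rangle=\langle\hat{\Lambda}(\hat{y}),\Lambda(x)\rangle$, hence $\hat{\om}_{\z}(\hat{y}^{*})=\overline{\hat{\om}_{\z}(\hat{y})}=\langle\Lambda(x),\hat{\Lambda}(\hat{y})\rangle$. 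Since the left-bounded vectors form a core for $\hat{\Lambda}$, the estimate $|\hat{\om}_{\z}(\hat{y}^{*})|\leq\left\Vert \Lambda(x)\right\Vert \left\Vert \hat{\Lambda}(\hat{y})\right\Vert$ extends to all $\hat{y}\in\mathcal{N}_{\hat{\varphi}}$ (compare the reduction in the proof of \prettyref{lem:defs_of_PD} via \citep[Lemma 2.5]{Kustermans_Vaes__LCQG_von_Neumann}); thus $\hat{\om}_{\z}$ is square integrable for $\hat{\G}$ with associated vector $\Lambda(x)$. By the biduality $\hat{\hat{\G}}=\G$, the defining relation of $\hat{\varphi}$ dualizes to the statement that a square-integrable $\hat{\om}\in\Lone{\hat{\G}}$ with associated vector $\hat{\xi}$ satisfies $\Lambda(\hat{\l}(\hat{\om}))=\hat{\xi}$; applied to $\hat{\om}_{\z}$ this gives $\Lambda(\hat{\l}(\hat{\om}_{\z}))=\Lambda(x)$, and injectivity of $\Lambda$ on $\mathcal{N}_{\varphi}$ (faithfulness of $\varphi$) yields $\hat{\l}(\hat{\om}_{\z})=x$. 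The two descriptions are equivalent because the displayed identity for $\hat{\om}_{\z}$ is itself a reformulation of the square-root property.

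The clause assuming $\Lambda(x)\in\Aa_{\hat{\varphi}}'$ is exactly the second half of \prettyref{thm:Phillips_root}: it gives $\z\in\Aa_{\hat{\varphi}}'$ and $\z\z=\Lambda(x)$, i.e.\ $\hat{\pi}'(\z)\z=\Lambda(x)$. For the concrete reformulation I would use the dictionary between the cone and the positive part of $\Linfty{\hat{\G}}'=\hat{J}\Linfty{\hat{\G}}\hat{J}$. For a right-bounded vector $\eta$ one has, by \prettyref{rem:pos_left_bounded} and $\langle\eta,\pi(\xi)^{*}\xi\rangle=\langle\hat{\pi}'(\eta)\xi,\xi\rangle$, the equivalence $\eta\in\mathcal{P}_{\hat{\varphi}}^{\flat}\iff\hat{\pi}'(\eta)\geq0$; moreover $\hat{J}$ carries right-bounded to left-bounded vectors with $\pi(\hat{J}\eta)=\hat{J}\,\hat{\pi}'(\eta)\,\hat{J}$. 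Now start from a positive $\hat{w}\in\mathcal{N}_{\hat{\varphi}}$: then $\hat{\Lambda}(\hat{w})\in\Aa_{\hat{\varphi}}$ with $\pi(\hat{\Lambda}(\hat{w}))=\hat{w}$, so $\eta:=\hat{J}\hat{\Lambda}(\hat{w})$ is right bounded with $\hat{\pi}'(\eta)=\hat{J}\hat{w}\hat{J}\geq0$, whence $\eta\in\Aa_{\hat{\varphi}}'\cap\mathcal{P}_{\hat{\varphi}}^{\flat}$. If in addition $\eta=\Lambda(x)$ for some $x\in\mathcal{N}_{\varphi}$, then \prettyref{lem:defs_of_PD} makes $x$ positive definite and all hypotheses of the theorem hold. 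The resulting square root $\z$ satisfies $\hat{\pi}'(\z)^{2}=\hat{\pi}'(\z\z)=\hat{\pi}'(\eta)=\hat{J}\hat{w}\hat{J}$, so $\hat{\pi}'(\z)=\hat{J}\hat{w}^{1/2}\hat{J}$, and therefore $\pi(\hat{J}\z)=\hat{J}\,\hat{\pi}'(\z)\,\hat{J}=\hat{w}^{1/2}$. This exhibits $\hat{w}^{1/2}=\pi(\hat{J}\z)\in\Linfty{\hat{\G}}$ as left bounded with $\hat{\Lambda}(\hat{w}^{1/2})=\hat{J}\z$; in particular $\hat{w}^{1/2}\in\mathcal{N}_{\hat{\varphi}}$.

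I expect this last reformulation to be the main obstacle, since it requires matching the abstract Hilbert-algebra product $\z\z$ with the operator square root $\hat{w}^{1/2}$. This rests on the Tomita--Takesaki identities $\hat{\pi}'(\z\z)=\hat{\pi}'(\z)^{2}$, $\hat{J}\Aa_{\hat{\varphi}}'\subseteq\Aa_{\hat{\varphi}}$ and $\pi(\hat{J}\eta)=\hat{J}\,\hat{\pi}'(\eta)\,\hat{J}$, together with the positivity equivalence $\eta\in\mathcal{P}_{\hat{\varphi}}^{\flat}\iff\hat{\pi}'(\eta)\geq0$ for right-bounded $\eta$; each is standard but must be assembled carefully so that the twist by $\hat{J}$ lands the square root back inside $\Linfty{\hat{\G}}$. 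By contrast the first two assertions are nearly immediate from \prettyref{cor:pos_def_L2_imply_integrable}, the only mild care being the core argument placing $\hat{\om}_{\z}$ among the square-integrable functionals.
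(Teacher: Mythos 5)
Your proposal is correct and follows essentially the same route as the paper: \prettyref{cor:pos_def_L2_imply_integrable} feeds into Phillips's \prettyref{thm:Phillips_root}, and the final reformulation is unwound through the same Perdrizet-type identification of $\mathcal{P}_{\hat{\varphi}}^{\flat}\cap\Aa_{\hat{\varphi}}'$ with $\{\hat{J}\hat{\Lambda}(\hat{w}):\hat{w}\in\mathcal{N}_{\hat{\varphi}},\ \hat{w}\geq0\}$. The only (harmless) divergence is in deriving $x=\hat{\l}(\hat{\om}_{\z})$: the paper pairs the square-root identity against $\om\in\mathcal{I}$ and uses density of $\mathcal{I}$ in $\Lone{\G}$, whereas you show $\hat{\om}_{\z}\in\hat{\mathcal{I}}$ with associated vector $\Lambda(x)$ and invoke biduality; both work, and both hinge on the same extension of the square-root identity from $\Aa_{\hat{\varphi}}$ to all left-bounded vectors, which the paper justifies via \citep[Theorem VI.1.26 (ii)]{Takesaki__book_vol_2} and you should make equally explicit.
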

\begin{proof}
The first part of the first assertion, as well as the second assertion,
follow from \prettyref{thm:Phillips_root} by using \prettyref{cor:pos_def_L2_imply_integrable}.
For the part after ``equivalently'', $\Lambda(x)$ having a square
root in $\mathcal{P}_{\hat{\varphi}}^{\flat}$ means, by definition,
that there exists $\z\in\mathcal{P}_{\hat{\varphi}}^{\flat}$ such
that $\langle\hat{\Lambda}(\hat{y}),\Lambda(x)\rangle=\hat{\om}_{\z}(\hat{y})$
for every $\hat{y}\in\mathcal{N}_{\hat{\varphi}}\cap\mathcal{N}_{\hat{\varphi}}^{*}$,
thus for every $\hat{y}\in\mathcal{N}_{\hat{\varphi}}$ \citep[Theorem VI.1.26 (ii)]{Takesaki__book_vol_2}.
In particular, for every $\om\in\mathcal{I}$, 
\[
\om(x^{*})=\langle\hat{\Lambda}(\l(\om)),\Lambda(x)\rangle=\hat{\om}_{\z}(\l(\om))=\om[(\i\tensor\hat{\om}_{\z})(W)].
\]
The density of $\mathcal{I}$ in $\Lone{\G}$ entails that $x=(\i\tensor\hat{\om}_{\z})(W)^{*}=(\i\tensor\hat{\om}_{\z})(W^{*})=\hat{\l}(\hat{\om}_{\z})$.
The converse is proved similarly.

For the last sentence, note that $\mathcal{P}_{\hat{\varphi}}^{\flat}\cap\Aa_{\hat{\varphi}}'=\{\hat{J}\hat{\Lambda}(\hat{w}):\hat{w}\in\mathcal{N}_{\hat{\varphi}}\text{ and }\hat{w}\geq0\}$
(see, e.g., the right version of \citep[Proposition 2.5]{Perdrizet__elements_positifs}).
If an element there is in $\Lambda(\mathcal{N}_{\varphi})$, then
its square root in $\mathcal{P}_{\hat{\varphi}}^{\flat}$ has the
form $\hat{J}\hat{\Lambda}(\hat{z})$ for $\hat{z}\in\mathcal{N}_{\hat{\varphi}}$
with $\hat{z}\geq0$, and the equality $\hat{J}\hat{\Lambda}(\hat{w})=\hat{J}\hat{\Lambda}(\hat{z}^{2})$
implies that $\hat{w}=\hat{z}^{2}$.\end{proof}
\begin{rem}
In the situation of \prettyref{thm:Godement} we have $\left\Vert x\right\Vert =\left\Vert \hat{\om}_{\z}\right\Vert =\left\Vert \z\right\Vert ^{2}$
by \prettyref{rem:CPD_norm}.
\end{rem}

\section{\label{sec:convolution_L_p}Convolution in $L^{p}(\protect\G)$}

This section contains the preliminaries on non-commutative $L^{p}$-spaces
of LCQGs needed in the next section. The theory of non-commutative
$L^{p}$-spaces of von Neumann algebras was developed in three approaches,
which turned out to be equivalent: the ``abstract'' one by Haagerup
\citep{Haagerup__L_p_spaces}, the ``spatial'' one by Connes and
Hilsum \citep{Hilsum__L_p_spaces}, and the one using interpolation
theory, whose final form is by Izumi \citep{Izumi__noncomm_Lp} (see
also Terp \citep{Terp__L_p_spaces,Terp__interpolation}). Here we
rely on the work of Caspers \citep{Caspers__Lp_Fourier_LCQG}, who
introduced and studied non-commutative $L^{p}$-spaces of LCQGs based
on Izumi's approach with interpolation parameter $\a=-\frac{1}{2}$.
This has two clear virtues. The first, which is intrinsic in interpolation
theory, is the fact that all non-commutative $L^{p}$-spaces are realized,
as vector spaces, as subspaces of a larger space, allowing the consideration
of intersections of them. Caspers proved that when $\a=-\frac{1}{2}$,
some of these intersections take a particularly natural form. The
second is simplicity: the statement (but not proofs!) of the construction's
basic ingredients does not require modular theory.

We bring now a succinct account of the theory. A pair of Banach spaces
$(A_{0},A_{1})$ is called \emph{compatible }in the sense of interpolation
theory (see Bergh and L{\"o}fstr{\"o}m \citep[Section 2.3]{Bergh_Lfstrom__book})
if they are continuously embedded in a Hausdorff topological vector
space. For $0<\theta<1$, the Calder\'{o}n \emph{complex interpolation
method} \citep[Chapter 4]{Bergh_Lfstrom__book} gives the interpolation
Banach space $C_{\theta}(A_{0},A_{1})$. As a vector space it satisfies
$A_{0}\cap A_{1}\subseteq C_{\theta}(A_{0},A_{1})\subseteq A_{0}+A_{1}$,
and these inclusions are contractive when $A_{0}\cap A_{1}$ and $A_{0}+A_{1}$
are given the norms $\left\Vert a\right\Vert _{A_{0}\cap A_{1}}:=\max(\left\Vert a\right\Vert _{A_{0}},\left\Vert a\right\Vert _{A_{1}})$,
$a\in A_{0}\cap A_{1}$, and $\left\Vert a\right\Vert _{A_{0}+A_{1}}:=\inf\left\{ \left\Vert a_{0}\right\Vert _{A_{0}}+\left\Vert a_{1}\right\Vert _{A_{1}}:a_{0}\in A_{0},a_{1}\in A_{1},a=a_{0}+a_{1}\right\} $,
$a\in A_{0}+A_{1}$. Moreover, $A_{0}\cap A_{1}$ is dense in $C_{\theta}(A_{0},A_{1})$
\citep[Theorem 4.2.2]{Bergh_Lfstrom__book}. The functor $C_{\theta}$
is an \emph{exact interpolation functor of exponent $\theta$} in
the following sense. Given another compatible pair $(B_{0},B_{1})$,
two bounded maps $T_{i}:A_{i}\to B_{i}$, $i=0,1$, are called \emph{compatible}
if they agree on $A_{0}\cap A_{1}$. Then the induced linear map $T:A_{0}+A_{1}\to B_{0}+B_{1}$
satisfies $TC_{\theta}(A_{0},A_{1})\subseteq C_{\theta}(B_{0},B_{1})$,
and the restriction $T:C_{\theta}(A_{0},A_{1})\to C_{\theta}(B_{0},B_{1})$
has norm at most $\left\Vert T_{0}\right\Vert ^{1-\theta}\left\Vert T_{1}\right\Vert ^{\theta}$.

Let $M$ be a von Neumann algebra, and let $\varphi$ be an n.s.f.~weight
on $M$. Define 
\[
L:=\left\{ x\in\mathcal{N}_{\varphi}:(\exists\presb x{\varphi}\in M_{*}\forall y\in\mathcal{N}_{\varphi})\quad\presb x{\varphi}(y^{*})=\varphi(y^{*}x)\right\} ,
\]
\[
R:=\left\{ x\in\mathcal{N}_{\varphi}^{*}:(\exists\varphi_{x}\in M_{*}\forall y\in\mathcal{N}_{\varphi})\quad\varphi_{x}(y)=\varphi(xy)\right\} .
\]
The spaces $L,R$ are precisely $L_{(-1/2)},L_{(1/2)}$ in Izumi's
notation \citep[Proposition 2.14]{Caspers__Lp_Fourier_LCQG}. Endow
$L,R$ with norms by putting $\left\Vert x\right\Vert _{L}:=\max(\left\Vert x\right\Vert _{M},\left\Vert \presb x{\varphi}\right\Vert _{M_{*}})$
for $x\in L$ and $\left\Vert x\right\Vert _{R}:=\max(\left\Vert x\right\Vert _{M},\left\Vert \varphi_{x}\right\Vert _{M_{*}})$
for $x\in R$. Define linear mappings $l^{1}:L\to M_{*}$, $l^{\infty}:L\to M$,
$r^{1}:R\to M_{*}$ and $r^{\infty}:R\to M$ by $l^{1}(x):=\presb x{\varphi}$
and $l^{\infty}(x):=x$ for $x\in L$, and similarly $r^{1}(x):=\varphi_{x}$
and $r^{\infty}(x):=x$ for $x\in R$. These maps are contractive
and injective. Furthermore, the adjoints $(l^{1})^{*}:M\to L^{*}$,
$(l^{\infty})^{*}:M_{*}\to L^{*}$, $(r^{1})^{*}:M\to R^{*}$ and
$(r^{\infty})^{*}:M_{*}\to R^{*}$ are also injective (in the second
and the fourth we restricted the usual adjoint from $M^{*}$ to $M_{*}$).
By \citep[Theorem 2.5]{Izumi__noncomm_Lp}, the diagram on the left-hand
side is commutative:
\begin{align}
 & \xymatrix{ & M\ar@{^{(}->}[rd]^{(r^{1})^{*}}\\
L\ar@{^{(}->}[ru]^{l^{\infty}}\ar@{^{(}->}[rd]_{l^{1}} &  & R^{*}\\
 & M_{*}\ar@{^{(}->}[ru]_{(r^{\infty})^{*}}
}
 &  & \xymatrix{ & M\ar@{^{(}->}[rd]^{(r^{1})^{*}}\\
L\ar@{^{(}->}[ru]^{l^{\infty}}\ar@{^{(}->}[rd]_{l^{1}}\ar@{^{(}->}[r]^{l^{p}} & L^{p}(M)_{\mathrm{left}}\ar@{^{(}->}[r] & R^{*}\\
 & M_{*}\ar@{^{(}->}[ru]_{(r^{\infty})^{*}}
}
\label{eq:Izumi_diagram}
\end{align}
In addition, by \citep[Corollary 2.13]{Izumi__noncomm_Lp}, 
\begin{equation}
((r^{1})^{*}\circ l^{\infty})(L)=(r^{1})^{*}(M)\cap(r^{\infty})^{*}(M_{*})=((r^{\infty})^{*}\circ l^{1})(L),\label{eq:Izumi_interpolation_intersect}
\end{equation}
allowing to regard $L$ as the ``intersection of $M$ and $M_{*}$
in $R^{*}$''.

Viewing $M,M_{*}$ as embedded, as vector spaces, in $R^{*}$ via
$(r^{1})^{*},(r^{\infty})^{*}$, the pair $(M,M_{*})$ is thus compatible.
For $1<p<\infty$, we define $(L^{p}(M)_{\mathrm{left}},\left\Vert \cdot\right\Vert _{p})$
to be the interpolation Banach space $C_{1/p}(M,M_{*})$. As above,
we have $(r^{1})^{*}(M)\cap(r^{\infty})^{*}(M_{*})\subseteq L^{p}(M)_{\mathrm{left}}\subseteq(r^{1})^{*}(M)+(r^{\infty})^{*}(M_{*})$
(all inside $R^{*}$) with contractive inclusions and $(r^{1})^{*}(M)\cap(r^{\infty})^{*}(M_{*})$
is dense in $(L^{p}(M)_{\mathrm{left}},\left\Vert \cdot\right\Vert _{p})$.
From \prettyref{eq:Izumi_interpolation_intersect} we get a contractive
injection $l^{p}:L\to L^{p}(M)_{\mathrm{left}}$ with dense range,
and the diagram on the right-hand side of \prettyref{eq:Izumi_diagram}
is commutative. 

Denote by $(\H,\Lambda)$ the GNS construction for $(M,\varphi)$.
The map $l^{2}(x)\mapsto\Lambda(x)$, $x\in L$, extends to a unitary
$U_{l}$ from $\Ltwo M_{\mathrm{left}}$ to $\H$, allowing us to
identify these spaces. We have the useful identity $\left\langle U_{l}^{*}\xi,y\right\rangle _{R^{*},R}=\left\langle \xi,\Lambda(y^{*})\right\rangle _{\H}$
for all $\xi\in\H$ and $y\in R$ \citep[Propositions 2.21, 2.22]{Caspers__Lp_Fourier_LCQG}. 

In the sequel we put $\Linfty M_{\mathrm{left}}:=M$ and $\Lone M_{\mathrm{left}}:=M_{*}$,
and view $M$, $M_{*}$ and $\H$ as linear subspaces of $R^{*}$
by eliminating the usage of $(r^{1})^{*}$, $(r^{\infty})^{*}$ and
$U_{l}^{*}$.

Define $\mathcal{I}:=\left\{ \om\in M_{*}:(\exists\xi(\om)\in\H\;\forall x\in\mathcal{N}_{\varphi})\quad\om(x^{*})=\left\langle \xi(\om),\Lambda(x)\right\rangle \right\} $,
and note that this is precisely $\mathcal{I}$ defined for $\Linfty{\G}$
in the Preliminaries. By \citep[Theorem 3.3]{Caspers__Lp_Fourier_LCQG},
we have $\mathcal{I}=\mathcal{H}\cap M_{*}$ in $R^{*}$, with $\om\in\mathcal{I}$
being equal to $\xi(\om)$. Moreover, the pair $(\H,M_{*})$ is evidently
also compatible. It is proved in \citep[Theorem 3.7]{Caspers__Lp_Fourier_LCQG}
using the reiteration theorem that for $1<p<2$, we have $C_{\frac{2}{p}-1}(\H,M_{*})=L^{p}(M)_{\mathrm{left}}$
in the simplest sense that they are equal as vector subspaces of $R^{*}$
and have the same norm.
\begin{defn}
Let $\G$ be a LCQG. For $1\leq p\leq\infty$, we define $L^{p}(\G)_{\mathrm{left}}$
to be $L^{p}(\Linfty{\G})_{\mathrm{left}}$, calculated with respect
to the left Haar weight $\varphi$. We identify $L^{p}(\G)_{\mathrm{left}}$
with $L^{p}(\G)$ for $p=1,2,\infty$.
\end{defn}
The following generalization of \citep[Theorem 6.4 (i)--(iii)]{Caspers__Lp_Fourier_LCQG}
is proved in the same way, with obvious modifications. For completeness,
we give full details. Handling the last part of the theorem, relating
convolutions and the Fourier transform on non-commutative $L^{p}$-spaces,
requires too much background, and is not needed in this paper. It
is thus left to the reader. A special case of this construction was
developed by Forrest, Lee and Samei \citep[Subsection 6.2]{Forrest_Lee_Samei__projectivity_mod_Fourier}.
\begin{thm}
\label{thm:convolution_Lp}Let $\G$ be a LCQG, $\mu\in\CzU{\G}^{*}$
and $1<p<2$. Consider the maps $\mu*^{1}\in B(\Lone{\G})$, $\Lone{\G}\ni\om\mapsto\mu*\om$,
and $\mu*^{2}:=\l^{\mathrm{u}}(\mu)\in B(\Ltwo{\G})$. Then these
maps are compatible, and the resulting induced operator $\mu*^{p}\in B(L^{p}(\G)_{\mathrm{left}})$
satisfies $\left\Vert \mu*^{p}\right\Vert \leq\left\Vert \mu\right\Vert $.\end{thm}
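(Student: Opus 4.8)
The plan is to realize $\mu*^{p}$ by complex interpolation of the two endpoint maps, so the argument splits into three tasks: bounding each of $\mu*^{1}$ and $\mu*^{2}$ by $\left\Vert \mu\right\Vert $, verifying that the two maps are compatible in the sense of interpolation theory, and then invoking the exactness of the functor $C_{\theta}$. Recall that for $1<p<2$ one has $L^{p}(\G)_{\mathrm{left}}=C_{2/p-1}(\Ltwo{\G},\Lone{\G})$ as subspaces of $R^{*}$, so the relevant compatible pair is $(A_{0},A_{1})=(\Ltwo{\G},\Lone{\G})$ with interpolation parameter $\theta=2/p-1\in(0,1)$ (and $T_{0}=\mu*^{2}$, $T_{1}=\mu*^{1}$).

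The two endpoint bounds are immediate. For $\mu*^{1}$, since $(\CzU{\G}^{*},*)$ is a Banach algebra in which $\Lone{\G}$ is a closed two-sided ideal, we have $\left\Vert \mu*\om\right\Vert \leq\left\Vert \mu\right\Vert \left\Vert \om\right\Vert $ for $\om\in\Lone{\G}$, whence $\left\Vert \mu*^{1}\right\Vert \leq\left\Vert \mu\right\Vert $. For $\mu*^{2}=\l^{\mathrm{u}}(\mu)=(\mu\tensor\i)(\Ww)$, I would use that $\Ww=(\i\tensor\hat{\pi}_{\mathrm{u}})(\WW)\in M(\CzU{\G}\tensormin\K(\Ltwo{\G}))$ is unitary (being the image of the unitary $\WW$ under a $*$-homomorphism); slicing its leftmost leg against $\mu$ then gives an operator of norm at most $\left\Vert \mu\right\Vert \left\Vert \Ww\right\Vert =\left\Vert \mu\right\Vert $ by the standard slice-map estimate (see \citep[Section 2]{Bedos_Murphy_Tuset__am_and_coam}), so $\left\Vert \mu*^{2}\right\Vert \leq\left\Vert \mu\right\Vert $.

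The crux is compatibility, i.e.\ that $\mu*^{1}$ and $\mu*^{2}$ agree on the overlap $\mathcal{I}=\Ltwo{\G}\cap\Lone{\G}$, under the identification of $\om\in\mathcal{I}$ with the vector $\xi(\om)=\hat{\Lambda}(\l(\om))$. Concretely, I must show that $\om\in\mathcal{I}$ implies $\mu*\om\in\mathcal{I}$ with $\xi(\mu*\om)=\l^{\mathrm{u}}(\mu)\,\xi(\om)$. The key identity is that $\l^{\mathrm{u}}$ is a $*$-homomorphism on $(\CzU{\G}^{*},*)$ restricting to $\l$ on $\Lone{\G}$ (because $(\pi_{\mathrm{u}}\tensor\i)(\Ww)=W$), so, since $\mu*\om\in\Lone{\G}$,
\[
\l(\mu*\om)=\l^{\mathrm{u}}(\mu*\om)=\l^{\mathrm{u}}(\mu)\,\l^{\mathrm{u}}(\om)=\l^{\mathrm{u}}(\mu)\,\l(\om).
\]
As $\l^{\mathrm{u}}(\mu)\in M(\Cz{\hat{\G}})\subseteq\Linfty{\hat{\G}}$ acts as a left multiplier, $\mathcal{N}_{\hat{\varphi}}$ is a left ideal on which $\hat{\Lambda}$ intertwines left multiplication, and $\l(\om)\in\mathcal{N}_{\hat{\varphi}}$ with $\hat{\Lambda}(\l(\om))=\xi(\om)$, we obtain $\l(\mu*\om)\in\mathcal{N}_{\hat{\varphi}}$ and $\hat{\Lambda}(\l(\mu*\om))=\l^{\mathrm{u}}(\mu)\,\xi(\om)$. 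It remains to pass from $\l(\mu*\om)\in\mathcal{N}_{\hat{\varphi}}$ back to $\mu*\om\in\mathcal{I}$ (with $\xi(\mu*\om)=\hat{\Lambda}(\l(\mu*\om))$); this is the characterization $\mathcal{I}=\{\rho\in\Lone{\G}:\l(\rho)\in\mathcal{N}_{\hat{\varphi}}\}$ built into the construction of $\hat{\varphi}$ in \citep{Kustermans_Vaes__LCQG_von_Neumann}. I expect this stability of $\mathcal{I}$ under left convolution by the possibly non-normal $\mu\in\CzU{\G}^{*}$ to be the main obstacle: for $\mu\in\Lone{\G}$ it is trivial since $\mathcal{I}$ is a left ideal, but for general $\mu$ one genuinely needs the displayed identity together with the $\l$--$\mathcal{N}_{\hat{\varphi}}$ correspondence.

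With compatibility established, the induced operator on $\Ltwo{\G}+\Lone{\G}$ restricts to $L^{p}(\G)_{\mathrm{left}}=C_{\theta}(\Ltwo{\G},\Lone{\G})$, and exactness of $C_{\theta}$ yields
\[
\left\Vert \mu*^{p}\right\Vert \leq\left\Vert \mu*^{2}\right\Vert ^{1-\theta}\left\Vert \mu*^{1}\right\Vert ^{\theta}\leq\left\Vert \mu\right\Vert ^{1-\theta}\left\Vert \mu\right\Vert ^{\theta}=\left\Vert \mu\right\Vert ,
\]
which is the desired bound.
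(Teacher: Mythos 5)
Your endpoint bounds and the final interpolation step are correct and coincide with what the paper does, and you have correctly identified that the whole proof reduces to showing $\mu*\om\in\mathcal{I}$ with $\xi(\mu*\om)=\l^{\mathrm{u}}(\mu)\xi(\om)$ for $\om\in\mathcal{I}$. The gap is exactly at the point you flag as ``the main obstacle'' and then dismiss: the characterization $\mathcal{I}=\{\rho\in\Lone{\G}:\l(\rho)\in\mathcal{N}_{\hat{\varphi}}\}$ is \emph{not} built into the construction of $\hat{\varphi}$. What the construction gives is only the forward inclusion --- $\hat{\Lambda}(\l(\om))=\xi(\om)$ for $\om\in\mathcal{I}$, together with the statement that $\l(\mathcal{I})$ is a $*$-ultrastrong--norm core for $\hat{\Lambda}$. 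The core property does not yield the converse: if $\l(\rho)\in\mathcal{N}_{\hat{\varphi}}$, you can approximate $\l(\rho)$ in the graph topology by $\l(\om_{i})$ with $\om_{i}\in\mathcal{I}$, but since $\l^{-1}$ is unbounded you cannot conclude that $\om_{i}\to\rho$ in any topology that lets you transfer the defining inequality of $\mathcal{I}$ to $\rho$. So the step you need --- passing from $\l(\mu*\om)=\l^{\mathrm{u}}(\mu)\l(\om)\in\mathcal{N}_{\hat{\varphi}}$ back to $\mu*\om\in\mathcal{I}$ --- is a genuine lemma that your write-up leaves unproved, and it is precisely where the analytic content of the theorem sits.

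The statement is true, and the way to prove it (which is essentially what the paper does inline, bypassing the lemma) is to test against the core $\hat{\l}(\hat{\mathcal{I}})$ of $\Lambda$ rather than against the core of $\hat{\Lambda}$: for $\hat{\om}\in\hat{\mathcal{I}}$ and $y:=\hat{\l}(\hat{\om})\in\mathcal{N}_{\varphi}$ one computes, using $\hat{\om}(\hat{z}^{*})=\langle\Lambda(y),\hat{\Lambda}(\hat{z})\rangle$ for $\hat{z}\in\mathcal{N}_{\hat{\varphi}}$, that
\[
(\mu*\om)(y^{*})=\overline{\hat{\om}\bigl[(\l^{\mathrm{u}}(\mu)\l(\om))^{*}\bigr]}=\langle\l^{\mathrm{u}}(\mu)\hat{\Lambda}(\l(\om)),\Lambda(y)\rangle,
\]
and then the fact that $\hat{\l}(\hat{\mathcal{I}})$ is a core for $\Lambda$ extends this identity to all $y\in\mathcal{N}_{\varphi}$, which is exactly the statement $\mu*\om\in\mathcal{I}$ with $\xi(\mu*\om)=\l^{\mathrm{u}}(\mu)\xi(\om)$. (The paper reaches the same display by expanding $(\mu*\om)(y^{*})=(\mu\tensor\om)(\Ww^{*}(\one\tensor y^{*})\Ww)$ and using the pentagon-type relation $\Ww_{12}^{*}W_{23}\Ww_{12}=\Ww_{13}W_{23}$; your algebraic identity $\l(\mu*\om)=\l^{\mathrm{u}}(\mu)\l(\om)$ encodes the same information.) So your architecture is salvageable, but as written the decisive step rests on an assertion that does not follow from the definition you invoke and is not accompanied by a proof or a correct citation.
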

\begin{proof}
Fix $\om\in\mathcal{I}$. For $\hat{\om}\in\hat{\mathcal{I}}$, write
$y:=\hat{\l}(\hat{\om})\in\Cz{\G}\cap\mathcal{N}_{\varphi}$, and
calculate 
\[
\begin{split}(\mu*\om)(y^{*}) & =(\mu\tensor\om)(\Ww^{*}(\one\tensor y^{*})\Ww)=(\mu\tensor\om)(\Ww^{*}(\one\tensor(\i\tensor\overline{\hat{\om}})(W))\Ww)\\
 & =(\mu\tensor\om\tensor\overline{\hat{\om}})(\Ww_{12}^{*}W_{23}\Ww_{12})=(\mu\tensor\om\tensor\overline{\hat{\om}})(\Ww_{13}W_{23})\\
 & =\overline{\hat{\om}}\left[(\mu\tensor\i)(\Ww)\cdot(\om\tensor\i)(W)\right]=\overline{\hat{\om}\left[(\l^{\mathrm{u}}(\mu)\l(\om))^{*}\right]}\\
 & =\langle\hat{\Lambda}(\l^{\mathrm{u}}(\mu)\l(\om)),\Lambda(y)\rangle=\langle\l^{\mathrm{u}}(\mu)\hat{\Lambda}(\l(\om)),\Lambda(y)\rangle.
\end{split}
\]
As $\hat{\l}(\hat{\mathcal{I}})$ is a core for $\Lambda$, we deduce
that $\mu*\om\in\mathcal{I}$ and $\xi(\mu*\om)=\l^{\mathrm{u}}(\mu)\xi(\om)$
(a slight generalization of \citep[Lemma 4.8]{Van_Daele__LCQGs}).
This means precisely that $\mu*^{1}$ and $\mu*^{2}$ are compatible.

Since $C_{\frac{2}{p}-1}(\Ltwo{\G},\Lone{\G})=L^{p}(\G)_{\mathrm{left}}$
and since $C_{\theta}$ is an exact interpolation functor of exponent
$\theta$, we have the existence of $\mu*^{p}$, and 
\[
\left\Vert \mu*^{p}\right\Vert \leq\left\Vert \mu*^{1}\right\Vert ^{1-((2/p)-1)}\left\Vert \mu*^{2}\right\Vert ^{(2/p)-1}\leq\left\Vert \mu\right\Vert ^{1-((2/p)-1)}\left\Vert \mu\right\Vert ^{(2/p)-1}=\left\Vert \mu\right\Vert .\qedhere
\]
\end{proof}
\begin{rem}
For $p>2$ it may be generally impossible to give a proper meaning
to $\mu*^{p}\om$ when $\mu\in\CzU{\G}^{*}$ and $\om\in L^{p}(\G)_{\mathrm{left}}$.
\end{rem}

\subsection{\label{sub:Lp_duality}Duality}

For $1<p,q<\infty$ with $\frac{1}{p}+\frac{1}{q}=1$, Izumi, generalizing
the classical duality of $L^{p}$-spaces, proved that $L^{p}(\G)_{\mathrm{left}}^{*}\cong L^{q}(\G)_{\mathrm{left}}$
via a natural sesquilinear form $\left(\cdot|\cdot\right)_{p}$ over
$L^{p}(\G)_{\mathrm{left}}\times L^{q}(\G)_{\mathrm{left}}$ (\citep[Theorem 6.1]{Izumi__biln_sesq_forms_duality_noncomm_Lp};
as usual, we are taking $\a=-\frac{1}{2}$ throughout). For $x,y\in L$,
we have $\left(l^{p}(x)|l^{q}(y)\right)_{p}={_{x}}\varphi(y^{*})=\varphi(y^{*}x)$
\citep[Theorem 2.5]{Izumi__biln_sesq_forms_duality_noncomm_Lp}.

If $1<p\leq2$, $\om\in\mathcal{I}=\Lone{\G}\cap\Ltwo{\G}$ and $y\in L$,
then $\left(\om|l^{q}(y)\right)_{p}=\om(y^{*})$. Indeed, endow $\mathcal{I}$
with the natural norm $\left\Vert \om\right\Vert _{\mathcal{I}}:=\max(\left\Vert \om\right\Vert _{\Lone{\G}},\left\Vert \xi(\om)\right\Vert _{\Ltwo{\G}})$,
$\om\in\mathcal{I}$. The embedding $L\hookrightarrow\mathcal{I}$,
$L\ni x\mapsto\presb x{\varphi}$, is contractive with dense range
\citep[Proposition 3.4]{Caspers__Lp_Fourier_LCQG}. If $\left(x_{n}\right)$
is a sequence in $L$ such that $\presb{x_{n}}{\varphi}\to\om$ in
$\mathcal{I}$, then $l^{p}(x_{n})\to\om$ in $L^{p}(\G)_{\mathrm{left}}$,
and so $\left(\om|l^{q}(y)\right)_{p}\leftarrow\left(l^{p}(x_{n})|l^{q}(y)\right)_{p}={_{x_{n}}}\varphi(y^{*})\to\om(y^{*})$.

\section{\label{sec:comparison_of_topologies}Comparison of topologies on
the unit sphere of $\protect\CzU{\protect\G}^{*}$}

In this section we generalize the main results of Granirer and Leinert
\citep{Granirer_Leinert__top_coincide}, and in particular obtain
a result (\prettyref{thm:topologies_in_CPD1}) about positive-definite
functions over LCQGs extending \citep{Raikov__types_of_conv,Yoshizawa__convergence_PD_func}.
\begin{defn}
Let $\G$ be a LCQG. We define several topologies on $\CzU{\G}^{*}$
as follows.
\begin{enumerate}
\item The \emph{strict topology} is the one induced by the semi-norms $\mu\mapsto\left\Vert \om*\mu\right\Vert _{\Lone{\G}}$
and $\mu\mapsto\left\Vert \mu*\om\right\Vert _{\Lone{\G}}$, $\om\in\Lone{\G}$.
\item For $p\in\left[1,2\right]$, the $p$-\emph{strict topology }is the
one induced by the semi-norms $\mu\mapsto\left\Vert \mu*^{p}\om\right\Vert _{p}$,
$\om\in L^{p}(\G)_{\mathrm{left}}$.
\item For $p\in\left[1,2\right]$, a net $\left(\mu_{\be}\right)$ in $\CzU{\G}^{*}$
converges to $\mu\in\CzU{\G}^{*}$ in the \emph{weak} $p$-\emph{strict
topology }if $\mu_{\be}*^{p}\om\to\mu*^{p}\om$ in the $w$-topology
$\sigma(L^{p}(\G)_{\mathrm{left}},L^{p}(\G)_{\mathrm{left}}^{*})$
for every $\om\in L^{p}(\G)_{\mathrm{left}}$.
\item A net $\left(\mu_{\be}\right)$ in $\CzU{\G}^{*}$ converges to $\mu\in\CzU{\G}^{*}$
in $\tau_{nw^{*}}$ if $\mu_{\be}\xrightarrow{w^{*}}\mu$ and $\left\Vert \mu_{\be}\right\Vert \to\left\Vert \mu\right\Vert $.
\item A net $\left(\mu_{\be}\right)$ in $\CzU{\G}^{*}$ converges to $\mu\in\CzU{\G}^{*}$
in $\tau_{bw^{*}}$ if $\mu_{\be}\xrightarrow{w^{*}}\mu$ and $\left(\mu_{\be}\right)$
is bounded.
\end{enumerate}
\end{defn}
We now generalize \citep[Theorem A]{Granirer_Leinert__top_coincide},
answering affirmatively a question raised by Hu, Neufang and Ruan
\citep[p.~140]{Hu_Neufang_Ruan__mod_maps_LCQG}.
\begin{thm}
\label{thm:top_on_C_0_u}Let $\G$ be a LCQG. On $\CzU{\G}^{*}$,
the strict topology is weaker than $\tau_{nw^{*}}$.\end{thm}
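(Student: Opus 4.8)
The plan is to recast the statement as an assertion about nets---that $\tau_{nw^{*}}$-convergence forces strict convergence---and then to run a noncommutative analogue of the classical Ra\u\i kov--Yoshizawa argument, whose real content is upgrading a pointwise (weak\textsuperscript{*}) statement to one that is \emph{uniform} over the unit ball of $\Cz{\G}$. First I would set up the reductions. Fix a net with $\mu_{\be}\xrightarrow{w^{*}}\mu$ and $\left\Vert \mu_{\be}\right\Vert \to\left\Vert \mu\right\Vert $; I must show $\left\Vert \om*\mu_{\be}-\om*\mu\right\Vert _{\Lone{\G}}\to0$ and the right-handed version for every $\om\in\Lone{\G}$. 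The two are symmetric: passing to $\G^{\op}$ (for which $\Linfty{\G^{\op}}=\Linfty{\G}$ and $\Delta^{\op}=\sigma\circ\Delta$ interchanges the two module actions and reverses the convolution, while leaving the Banach space $\CzU{\G}^{*}$, its norm and its $w^{*}$-topology unchanged) turns the seminorm $\mu\mapsto\left\Vert \mu*\om\right\Vert $ into $\mu\mapsto\left\Vert \om*\mu\right\Vert $, so it suffices to treat the left-handed seminorm. If $\mu=0$ then $\left\Vert \om*\mu_{\be}\right\Vert \leq\left\Vert \om\right\Vert \left\Vert \mu_{\be}\right\Vert \to0$ trivially; otherwise, since $\left\Vert \mu_{\be}\right\Vert \to\left\Vert \mu\right\Vert $, replacing $\mu_{\be}$ by $\mu_{\be}/\left\Vert \mu_{\be}\right\Vert $ I may assume $\left\Vert \mu_{\be}\right\Vert =\left\Vert \mu\right\Vert =1$.

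Next I would reformulate the target. Because $\om*\mu\in\Lone{\G}=\Linfty{\G}_{*}$ is normal and, by Kaplansky density, the unit ball of $\Cz{\G}$ is $w^{*}$-dense in that of $\Linfty{\G}$, one has
\[
\left\Vert \om*\mu_{\be}-\om*\mu\right\Vert _{\Lone{\G}}=\sup\left\{ \left|(\om*\mu_{\be}-\om*\mu)(a)\right|:a\in\Cz{\G},\ \left\Vert a\right\Vert \leq1\right\} ,
\]
so the goal is exactly \emph{uniform} convergence of $\mu\mapsto(\om*\mu)(a)$ over the unit ball of $\Cz{\G}$. The pointwise half is routine: for each fixed $a\in\Cz{\G}$ one has $(\om*\mu)(a)=\om(\mu\cdot a)$ with $\mu\cdot a\in\Cz{\G}$ by \prettyref{lem:C_0__C_0_u_star}, and unwinding the module actions and the embedding $\Lone{\G}\hookrightarrow\CzU{\G}^{*}$ from the Preliminaries shows that $\mu\mapsto(\om*\mu)(a)$ is given by pairing $\mu$ with the fixed element $(\om\tensor\i)\Delta_{\mathrm{u}}(\tilde{a})\in\CzU{\G}$ (for a lift $\tilde a$ of $a$ under $\pi_{\mathrm{u}}$), hence is $w^{*}$-continuous. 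Therefore $(\om*\mu_{\be})(a)\to(\om*\mu)(a)$ for every $a\in\Cz{\G}$.

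The crux is to make this convergence uniform over the unit ball, and this is where the norm hypothesis $\left\Vert \mu_{\be}\right\Vert \to\left\Vert \mu\right\Vert $ is indispensable---it plays the role of tightness, forbidding ``loss of mass at infinity,'' precisely as in the classical theorem. Here I would fix $\e>0$, choose an approximate identity $(e_{j})$ for the $C^{*}$-algebra $\Cz{\G}$, and split $(\om*\mu)(a)$ into a localized part, built from $e_{j}ae_{j}$, and a remainder. On the localized piece the relevant set of functionals is effectively compact, so pointwise convergence is uniform in $a$; the remainder is dominated, uniformly in $\be$, by a quantity governed by $\left\Vert \mu_{\be}\right\Vert -\left|\mu_{\be}(e_{j})\right|$-type terms, which tend to $0$ because lower semicontinuity of the norm on the enveloping von Neumann algebra $\CzU{\G}^{**}$ together with $\mu_{\be}\xrightarrow{w^{*}}\mu$ and $\left\Vert \mu_{\be}\right\Vert =1$ prevents mass from escaping. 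Combining the two estimates yields $\limsup_{\be}\left\Vert \om*\mu_{\be}-\om*\mu\right\Vert _{\Lone{\G}}\leq\e$, and letting $\e\to0$ finishes the proof.

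I expect this last step to be the genuine obstacle. The subtlety is that the tightness condition is naturally phrased in the \emph{universal} algebra $\CzU{\G}$ (where $\mu_{\be}$ lives), whereas the convolution $\om*\mu$ lives in $\Lone{\G}$ over the \emph{reduced} $\Linfty{\G}$; one must therefore transport equi-tightness of $(\mu_{\be})$ through both the convolution by $\om$ and the surjection $\pi_{\mathrm{u}}$, showing that the family $\{\mu\mapsto(\om*\mu)(a):\left\Vert a\right\Vert \leq1\}$ is equi-tight so that the norm hypothesis upgrades $w^{*}$- to norm-convergence. The reductions and the pointwise $w^{*}$-continuity are soft; the equi-tightness/uniformization is the one step that must be carried out with care, and it is exactly the part that generalizes \citep{Raikov__types_of_conv,Yoshizawa__convergence_PD_func}.
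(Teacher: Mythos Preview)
Your proposal is a plan rather than a proof: you explicitly flag the uniformization step as ``the genuine obstacle'' and do not carry it out. The difficulty is real, and as written the plan does not close. You localize the \emph{test element} $a\in\Cz{\G}$ via $e_{j}ae_{j}$, but the set $\{e_{j}ae_{j}:\left\Vert a\right\Vert \leq1\}$ is not compact in any useful sense (the $e_{j}$ need not be compact operators), so the assertion that ``the relevant set of functionals is effectively compact'' is unjustified. For the remainder, rewriting $(\om*\mu_{\be})\bigl((\one-e_{j})a\bigr)=\bigl((\one-e_{j})(\om*\mu_{\be})\bigr)(a)$ shows you would need $\left\Vert(\one-e_{j})(\om*\mu_{\be})\right\Vert$ to be small uniformly in $\be$; that is a tightness statement for the \emph{convolved} net $(\om*\mu_{\be})$ in $\Lone{\G}$, which is not part of the hypotheses and is essentially what you are trying to prove. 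Your suggested control by ``$\left\Vert \mu_{\be}\right\Vert -\left|\mu_{\be}(e_{j})\right|$-type terms'' conflates levels, since $\mu_{\be}$ lives on $\CzU{\G}$ while your $(e_{j})$ lives in $\Cz{\G}$, and there is no reason a lift of $(e_{j})$ through $\pi_{\mathrm{u}}$ should be an approximate identity in $\CzU{\G}$.

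The paper's proof resolves precisely this by localizing the \emph{functionals} rather than the test elements, and by working in $\CzU{\G}$ throughout. One takes an approximate identity $(e_{\a})$ in $\CzU{\G}$ and uses that $\mu_{\be}\xrightarrow{w^{*}}\mu$ together with $\left\Vert \mu_{\be}\right\Vert \to\left\Vert \mu\right\Vert$ forces $\left|\mu_{\be}\right|\xrightarrow{w^{*}}\left|\mu\right|$ (Effros); a Cauchy--Schwarz estimate then gives indices $\a_{1},\a_{2},\be_{0}$ with $\left\Vert e_{\a_{1}}\mu_{\be}e_{\a_{2}}-\mu_{\be}\right\Vert<\e$ for $\be\geq\be_{0}$ (and similarly for $\mu$). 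This is exactly the uniform tightness you were seeking, but obtained for $\mu_{\be}$ itself in $\CzU{\G}^{*}$, where the norm hypothesis actually lives. For the sandwiched functionals $e_{\a_{1}}\mu_{\be}e_{\a_{2}}$ one then proves directly that $\tau_{bw^{*}}$-convergence implies strict convergence: writing $\om=\om_{\z,\eta}$ and using rank-one projections $e_{\z},e_{\eta}$, the identity $\Ww\in M(\CzU{\G}\tensormin\K(\Ltwo{\G}))$ yields $\Ww(a\tensor e_{\z}),\,(b\tensor e_{\eta})\Ww^{*}\in\CzU{\G}\tensormin\K(\Ltwo{\G})$, so the relevant pairing is a norm-limit of finite sums of $w^{*}$-continuous functionals, uniformly over $\left\Vert x\right\Vert\leq1$. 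The two ingredients you were missing are thus (i) the passage to absolute values of functionals to get uniform tightness of $(\mu_{\be})$ in $\CzU{\G}^{*}$, and (ii) the use of $\Ww\in M(\CzU{\G}\tensormin\K(\Ltwo{\G}))$ to convert bounded $w^{*}$-convergence of $a\mu_{\be}b$ into norm convergence of the convolutions.
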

\begin{lem}
\label{lem:approx_iden_and_dual}Let $A$ be a $C^{*}$-algebra and
$\left(e_{\a}\right)$ be an approximate identity for $A$. Let $\left(\mu_{\be}\right)$
be a net in $A^{*}$ and $\mu\in A^{*}$ be such that $\mu_{\be}\xrightarrow{w^{*}}\mu$
and $\left\Vert \mu_{\be}\right\Vert \to\left\Vert \mu\right\Vert $.
Then for every $\e>0$ there are $\a_{0},\be_{0}$ such that $\left\Vert e_{\a_{0}}\mu_{\be}-\mu_{\be}\right\Vert <\e$
(resp., $\left\Vert \mu_{\be}e_{\a_{0}}-\mu_{\be}\right\Vert <\e$)
for every $\be\geq\be_{0}$ and $\left\Vert e_{\a_{0}}\mu-\mu\right\Vert <\e$
(resp., $\left\Vert \mu-\mu e_{\a_{0}}\right\Vert <\e$).\end{lem}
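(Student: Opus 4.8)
The plan is to reduce everything to a tightness statement about the positive functionals canonically attached to the $\mu_{\be}$, and to extract that tightness from the norm convergence $\|\mu_{\be}\|\to\|\mu\|$. First I would record the standard facts used freely: $A$ has an increasing approximate identity $\left(e_{\a}\right)$ with $0\le e_{\a}\le\one$, and for every $\nu\in A^{*}$ one has $\|e_{\a}\nu-\nu\|\to0$ and $\|\nu e_{\a}-\nu\|\to0$ (here $\one$ is the unit of $A^{**}$). I would then represent $A^{**}$ in standard form on a Hilbert space $\H$, so that each $\mu_{\be}$ is a vector functional $\om_{\xi_{\be},\eta_{\be}}$ with $\|\xi_{\be}\|=\|\eta_{\be}\|=\|\mu_{\be}\|^{1/2}$, and I set $\rho_{\be}:=\om_{\xi_{\be}}$, a positive functional with $\|\rho_{\be}\|=\|\mu_{\be}\|$. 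The case $\mu=0$ is trivial, since then $\|e_{\a}\mu_{\be}-\mu_{\be}\|\le2\|\mu_{\be}\|\to0$, so I may assume $\|\mu\|>0$.

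The two elementary estimates driving the proof come from Hilbert-space geometry together with $e_{\a}^{2}\le e_{\a}$ and $(\one-e_{\a})^{2}\le\one-e_{\a}$. Since $(e_{\a}\mu_{\be})(x)=\mu_{\be}(xe_{\a})=\langle xe_{\a}\xi_{\be},\eta_{\be}\rangle$, we get $e_{\a}\mu_{\be}-\mu_{\be}=\om_{(e_{\a}-\one)\xi_{\be},\,\eta_{\be}}$, whence the upper bound
\[
\|e_{\a}\mu_{\be}-\mu_{\be}\|^{2}\le\|(\one-e_{\a})\xi_{\be}\|^{2}\,\|\eta_{\be}\|^{2}\le\rho_{\be}(\one-e_{\a})\,\|\mu_{\be}\|.
\]
Similarly $e_{\a}\mu_{\be}=\om_{e_{\a}\xi_{\be},\eta_{\be}}$ gives the lower bound
\[
\rho_{\be}(e_{\a})\ge\frac{\|e_{\a}\mu_{\be}\|^{2}}{\|\mu_{\be}\|}.
\]
The first inequality shows it suffices to produce $\a_{0},\be_{0}$ with $\rho_{\be}(\one-e_{\a_{0}})$ uniformly small for $\be\ge\be_{0}$; the second is what lets me control $\rho_{\be}$ through quantities well behaved under weak$^{*}$ convergence.

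The heart of the argument, and the main obstacle, is that $\rho_{\be}$ need not converge weak$^{*}$ (it is built from the polar part of $\mu_{\be}$), so the tail $\rho_{\be}(\one-e_{\a})$ is not directly accessible. I would circumvent this by a squeezing argument powered twice by the norm convergence. For fixed $\a$ the functional $e_{\a}\mu_{\be}$ converges weak$^{*}$ to $e_{\a}\mu$ (as $xe_{\a}\in A$), so weak$^{*}$ lower semicontinuity of the norm and $\|\mu_{\be}\|\to\|\mu\|$ give, via the lower bound,
\[
\liminf_{\be}\rho_{\be}(e_{\a})\ge\frac{\liminf_{\be}\|e_{\a}\mu_{\be}\|^{2}}{\|\mu\|}\ge\frac{\|e_{\a}\mu\|^{2}}{\|\mu\|}.
\]
Letting $\a\to\infty$ and using $\|e_{\a}\mu\|\to\|\mu\|$ forces $\lim_{\a}\liminf_{\be}\rho_{\be}(e_{\a})=\|\mu\|$. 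Since $\rho_{\be}(\one-e_{\a})=\|\mu_{\be}\|-\rho_{\be}(e_{\a})$ and $\|\mu_{\be}\|\to\|\mu\|$, this rearranges to
\[
\lim_{\a}\ \limsup_{\be}\ \rho_{\be}(\one-e_{\a})=\|\mu\|-\lim_{\a}\liminf_{\be}\rho_{\be}(e_{\a})=0.
\]

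Given $\e>0$ I would then choose $\a_{0}$ large enough that $\limsup_{\be}\rho_{\be}(\one-e_{\a_{0}})<\e^{2}/(2\|\mu\|)$ and $\|e_{\a_{0}}\mu-\mu\|<\e$, and pick $\be_{0}$ so that for $\be\ge\be_{0}$ both $\rho_{\be}(\one-e_{\a_{0}})<\e^{2}/(2\|\mu\|)$ and $\|\mu_{\be}\|<2\|\mu\|$ hold; the upper bound then yields $\|e_{\a_{0}}\mu_{\be}-\mu_{\be}\|<\e$ for $\be\ge\be_{0}$, as required. The right-handed statement (with $\mu_{\be}e_{\a_{0}}$ and $\mu-\mu e_{\a_{0}}$) is proved verbatim after replacing $\rho_{\be}$ by $\om_{\eta_{\be}}$ and using $\mu_{\be}e_{\a}=\om_{\xi_{\be},\,e_{\a}\eta_{\be}}$. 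I expect the only delicate point to be the bookkeeping in the squeezing step, making sure the two appeals to $\|\mu_{\be}\|\to\|\mu\|$ (once to pass $\liminf_{\be}$ through the quotient, once to convert $\rho_{\be}(\one-e_{\a})$ into $\|\mu_{\be}\|-\rho_{\be}(e_{\a})$) are correctly interleaved with the order of $\liminf_{\be}$ and $\lim_{\a}$.
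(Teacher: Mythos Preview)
Your argument is correct. Both proofs establish the same Cauchy--Schwarz upper bound
\[
\|e_{\a}\mu_{\be}-\mu_{\be}\|^{2}\le\|\mu_{\be}\|\cdot\rho_{\be}(\one-e_{\a})
\]
for a suitable positive functional $\rho_{\be}$ with $\|\rho_{\be}\|=\|\mu_{\be}\|$, and then show that the right-hand side can be made small uniformly in $\be$ past some $\be_{0}$. The difference lies in how this uniform smallness is obtained. The paper takes $\rho_{\be}=|\mu_{\be}|$ (the polar absolute value) and invokes the structural fact, cited from Effros and Takesaki, that $\mu_{\be}\xrightarrow{w^{*}}\mu$ together with $\|\mu_{\be}\|\to\|\mu\|$ forces $|\mu_{\be}|\xrightarrow{w^{*}}|\mu|$; this gives $|\mu_{\be}|(\one-e_{\a_{0}})\to|\mu|(\one-e_{\a_{0}})$ directly. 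You instead realise $\mu_{\be}=\om_{\xi_{\be},\eta_{\be}}$ in standard form, take $\rho_{\be}=\om_{\xi_{\be}}$ (which need not equal $|\mu_{\be}|$ but satisfies the same inequality), and run a self-contained squeezing argument: the lower bound $\rho_{\be}(e_{\a})\ge\|e_{\a}\mu_{\be}\|^{2}/\|\mu_{\be}\|$, the weak$^{*}$ convergence $e_{\a}\mu_{\be}\to e_{\a}\mu$, lower semicontinuity of the norm, and the identity $\rho_{\be}(\one-e_{\a})=\|\mu_{\be}\|-\rho_{\be}(e_{\a})$ combine to force $\limsup_{\be}\rho_{\be}(\one-e_{\a})\to 0$ as $\a\to\infty$. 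Your route is more elementary in that it avoids the cited continuity result for the absolute value, at the cost of the extra bookkeeping with iterated $\liminf$/$\limsup$; the paper's route is shorter once that result is granted. One small remark: the statement concerns an \emph{arbitrary} approximate identity, so your opening assumption that $(e_{\a})$ is increasing is not strictly available, but your argument does not actually use monotonicity---only $0\le e_{\a}\le\one$ and $\|e_{\a}\mu-\mu\|\to 0$---so this is harmless.
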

\begin{proof}
If $M$ is a von Neumann algebra (e.g., $A^{**}$), recall that the
``absolute value'' of $\nu\in M_{*}$ can be defined in two ways,
as the unique $\left|\nu\right|\in M_{*}^{+}$ with $\left\Vert \left|\nu\right|\right\Vert =\left\Vert \nu\right\Vert $
satisfying either $\left|\nu(x)\right|^{2}\leq\left\Vert \nu\right\Vert \cdot\left|\nu\right|(x^{*}x)$
or $\left|\nu(x)\right|^{2}\leq\left\Vert \nu\right\Vert \cdot\left|\nu\right|(xx^{*})$
for all $x\in M$. We will use the first way to establish half of
the lemma's assertion, the other half being established similarly
using the second way.

For every $\nu\in A^{*}$ and $a\in A$ we have, writing $\one$ for
$\one_{M(A)}$, 
\[
\begin{split}\left|(\nu-e_{\a}\nu)(a)\right|^{2} & =\left|\nu\left(a\left(\one-e_{\a}\right)\right)\right|^{2}\\
 & \leq\left\Vert \nu\right\Vert \left|\nu\right|\left[\left(\one-e_{\a}\right)a^{*}a\left(\one-e_{\a}\right)\right]\\
 & \leq\left\Vert \nu\right\Vert \left\Vert a\right\Vert ^{2}\left|\nu\right|((\one-e_{\a})^{2})\leq\left\Vert \nu\right\Vert \left\Vert a\right\Vert ^{2}\left|\nu\right|(\one-e_{\a}).
\end{split}
\]
Hence $\left\Vert \nu-e_{\a}\nu\right\Vert ^{2}\leq\left\Vert \nu\right\Vert \left|\nu\right|(\one-e_{\a})$.
Since $\left(e_{\a}\right)$ is an approximate identity for $A$,
we have $\left|\nu\right|(\one-e_{\a})\to0$ by strict continuity.
Let $\a_{0}$ be such that $\left\Vert \mu\right\Vert \left|\mu\right|(\one-e_{\a_{0}})<\e^{2}$.
Since $\mu_{\be}\xrightarrow{w^{*}}\mu$ and $\left\Vert \mu_{\be}\right\Vert \to\left\Vert \mu\right\Vert $,
we have $\left|\mu_{\be}\right|\xrightarrow{w^{*}}\left|\mu\right|$
(see Effros \citep[Lemma 3.5]{Effros__order_ideals} or \citep[Proposition III.4.11]{Takesaki__book_vol_1}).
Therefore, 
\[
\left\Vert \mu_{\be}-e_{\a_{0}}\mu_{\be}\right\Vert ^{2}\leq\left\Vert \mu_{\be}\right\Vert \left|\mu_{\be}\right|(\one-e_{\a_{0}})\xrightarrow[\be]{}\left\Vert \mu\right\Vert \left|\mu\right|(\one-e_{\a_{0}})<\e^{2},
\]
so we can choose $\be_{0}$ as asserted.\end{proof}
\begin{lem}
\label{lem:top_on_C_0_u__bw_star__multiplier}Let $a,b\in\CzU{\G}$.
The map $(\CzU{\G}^{*},\tau_{bw^{*}})\to(\CzU{\G}^{*},\text{strict topology})$
given by $\mu\mapsto a\mu b$ is continuous.\end{lem}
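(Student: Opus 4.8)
The plan is to dualize. Since $\mu\mapsto a\mu b$ is linear, it suffices to fix $\om\in\Lone{\G}$ and to show that each of the two defining semi-norms of the strict topology, $\mu\mapsto\|\om*(a\mu b)\|_{\Lone{\G}}$ and $\mu\mapsto\|(a\mu b)*\om\|_{\Lone{\G}}$, sends a bounded $w^{*}$-null net to $0$; the two cases being symmetric, I concentrate on the first. The computation I would record is that, for every $c\in\CzU{\G}$,
\[
(\om*(a\mu b))(c)=(\om\tensor\mu)\bigl[(\one\tensor b)\Delta_{\mathrm u}(c)(\one\tensor a)\bigr]=\mu\bigl(b\,[(\om\tensor\i)\Delta_{\mathrm u}(c)]\,a\bigr)=:\mu(\Psi(c)),
\]
where $\Psi(c):=b\,[(\om\tensor\i)\Delta_{\mathrm u}(c)]\,a$. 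As $\Delta_{\mathrm u}$ is an isometric $*$-homomorphism and $\CzU{\G}$ is an ideal in $M(\CzU{\G})$, the element $\Psi(c)$ lies in $\CzU{\G}$ and $\Psi\colon\CzU{\G}\to\CzU{\G}$ is bounded with $\|\Psi\|\le\|a\|\,\|b\|\,\|\om\|$. Since $\om*(a\mu b)$ lies in the ideal $\Lone{\G}$ and the embedding $\Lone{\G}\hookrightarrow\CzU{\G}^{*}$ is isometric (Kaplansky density together with the isometry $\pi_{\mathrm u}^{*}$), the identity above yields $\|\om*(a\mu b)\|_{\Lone{\G}}=\|\Psi^{*}\mu\|_{\CzU{\G}^{*}}$.

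Everything thus reduces to the assertion that $\Psi$ is a \emph{compact} operator. Granting this, fix a bounded net $(\mu_{\be})$ with $\mu_{\be}\xrightarrow{w^*}\mu$ and put $\nu_{\be}:=\mu_{\be}-\mu$, so that $\nu_{\be}\xrightarrow{w^*}0$ and $\sup_{\be}\|\nu_{\be}\|<\infty$. The set $K:=\overline{\{\Psi(c):\|c\|\le1\}}$ is then norm-compact in $\CzU{\G}$, and on a norm-compact set a bounded $w^{*}$-null net converges uniformly: the functionals $\nu_{\be}$ are equi-Lipschitz on $K$ and converge to $0$ pointwise, so a finite $\e$-net for $K$ finishes the argument. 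Hence $\|\Psi^{*}\nu_{\be}\|=\sup_{\|c\|\le1}|\nu_{\be}(\Psi(c))|\to0$, i.e. $\|\om*(a\mu_{\be}b)-\om*(a\mu b)\|_{\Lone{\G}}\to0$; the companion semi-norm is treated via the symmetric pre-adjoint $c\mapsto b\,[(\i\tensor\om)\Delta_{\mathrm u}(c)]\,a$, which is compact for the same reason.

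The main obstacle is exactly the compactness of $\Psi$, and the plan here is to reduce to generators and then approximate $\Psi$ in operator norm by finite-rank maps. Using $\|\Psi\|\le\|a\|\,\|b\|\,\|\om\|$, bilinearity, and norm-density, I may assume $\om\in\mathcal{I}=\Lone{\G}\cap\Ltwo{\G}$ and that $a,b$ are slices $(\i\tensor\theta)(\Ww)$ of the universal left regular co-representation, which span a dense subspace of $\CzU{\G}$. The decisive structural input is that $\Ww\in M(\CzU{\G}\tensormin\K(\Ltwo{\G}))$ is compact-valued in its $\Ltwo{\G}$-leg: rewriting $\Psi$ through $\Ww$ and approximating the functionals $\theta$ by finite-rank ones should localize the $\Ltwo{\G}$-variable and exhibit $\Psi$ as a norm limit of finite-rank operators. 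This is the quantum substitute for the classical fact that on $C_{0}$ the composition of multiplication by a $C_{0}$-function with convolution by an $L^{1}$-function is compact (tightness coming from $a,b\in\CzU{\G}$, equicontinuity from the $\Lone{\G}$-convolution induced by $\om$), and carrying it out rigorously is the technical heart of the proof.
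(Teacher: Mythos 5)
Your reduction is clean and correct as far as it goes: the identity $(\om*(a\mu b))(c)=\mu(\Psi(c))$ with $\Psi(c)=b\,[(\om\tensor\i)\Delta_{\mathrm u}(c)]\,a$, the isometry $\Lone{\G}\hookrightarrow\CzU{\G}^{*}$, and the observation that compactness of $\Psi$ forces $w^{*}$-to-norm continuity of $\Psi^{*}$ on bounded sets are all fine. But you have reformulated the lemma rather than proved it: the compactness of $\Psi$ \emph{is} the content of the statement, and your plan for it does not close. Reducing to $a,b$ of the form $(\i\tensor\theta)(\Ww)$ and then ``approximating $\theta$ by finite-rank functionals'' only perturbs $a$ and $b$ in norm, hence perturbs $\Psi$ in operator norm; it does not make $\Psi$ finite rank. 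Worse, if you try to push the slicing functional of the \emph{variable} $c=(\i\tensor\hat{\theta})(\Ww)$ through the computation, you run into the problem that $\Vert\hat{\theta}\Vert$ is not controlled by $\Vert c\Vert$, so the would-be finite-rank approximants $\sum_i\hat{\theta}(\hat{k}_i)a_i$ are not bounded in terms of $\Vert c\Vert$. No mechanism producing norm-limits of finite-rank maps is actually identified.

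The device the paper uses to supply exactly this mechanism is different and worth recording. One does not touch $\Delta_{\mathrm u}$ at all; instead one uses the formula $(\nu*\om)(x)=(\nu\tensor\om)(\Ww^{*}(\one\tensor x)\Ww)$ for $\nu\in\CzU{\G}^{*}$, $\om\in\Cz{\G}^{*}$, $x\in\Cz{\G}$, writes $\om=\om_{\z,\eta}$ via the standard form of $\Linfty{\G}$ on $\Ltwo{\G}$, and inserts the rank-one projections $e_{\z},e_{\eta}$ so that $(a\nu b)*\om$ becomes $x\mapsto(\nu\tensor\om_{\z,\eta})\bigl((b\tensor e_{\eta})\Ww^{*}(\one\tensor x)\Ww(a\tensor e_{\z})\bigr)$. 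The decisive point is that $\Ww(a\tensor e_{\z})$ and $(b\tensor e_{\eta})\Ww^{*}$ lie in $\CzU{\G}\tensormin\K(\Ltwo{\G})$ itself (not merely its multiplier algebra) precisely because $a,b\in\CzU{\G}$ and $e_{\z},e_{\eta}$ are compact; approximating both in norm by elementary tensors exhibits $\nu\mapsto(a\nu b)*\om$ as a norm-limit of maps of the form $\nu\mapsto\sum_{i,j}\nu(b_{j}a_{i})\,\rho_{ij}$ with $\rho_{ij}$ fixed functionals, which is your compactness statement in adjoint form. (The second semi-norm is then handled in one line via the universal unitary antipode $R_{\mathrm u}$, rather than by redoing the argument symmetrically.) If you want to keep your formulation through $\Psi$, you must import this insertion of $e_{\z},e_{\eta}$ — or some equivalent use of $\Ww\in M(\CzU{\G}\tensormin\K(\Ltwo{\G}))$ applied to the fixed datum $\om$ rather than to the variable $c$ — before the proof is complete.
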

\begin{proof}
Let $\left(\mu_{\be}\right)$ be a bounded net in $\CzU{\G}^{*}$
and $\mu\in\CzU{\G}^{*}$ be such that $\mu_{\be}\xrightarrow{w^{*}}\mu$.
Representing $\CzU{\G}$ faithfully on a Hilbert space $\H_{\mathrm{u}}$,
we view the operator $\Ww\in M(\CzU{\G}\tensormin\Cz{\hat{\G}})$
as an element of $B(\H_{\mathrm{u}}\tensor\Ltwo{\G})$. Recall \citep[Proposition 8.3 and its proof]{Kustermans__LCQG_universal}
that for every $\nu\in\CzU{\G}^{*}$ and $\om\in\Cz{\G}^{*}$, the
functional $\nu*\om\in\CzU{\G}^{*}$ corresponds to the element of
$C_{0}(\G)^{*}$ given by 
\[
C_{0}(\G)\ni x\mapsto(\nu\tensor\om)(\Ww^{*}(\one\tensor x)\Ww),
\]
which makes sense because $\Ww^{*}(1\tensor x)\Ww\in M(\CzU{\G}\tensormin C_{0}(\G))$. 

Fix $\om\in\Lone{\G}$, write $\om=\om_{\z,\eta}$ for $\z,\eta\in\Ltwo{\G}$
(this is possible as $\Linfty{\G}$ is in standard form on $\Ltwo{\G}$),
and let $e_{\z},e_{\eta}\in\K(\Ltwo{\G})$ be the projections of $\Ltwo{\G}$
onto $\C\z,\C\eta$, respectively. Then for $\nu\in\CzU{\G}^{*}$,
the functional $(a\nu b)*\om$ corresponds to 
\[
\begin{split}C_{0}(\G)\ni x & \mapsto(\nu\tensor\om_{\z,\eta})((b\tensor\one)\Ww^{*}(\one\tensor x)\Ww(a\tensor\one))\\
 & =(\nu\tensor\om_{\z,\eta})((b\tensor e_{\eta})\Ww^{*}(\one\tensor x)\Ww(a\tensor e_{\z})).
\end{split}
\]
Since $\Ww\in M(\CzU{\G}\tensormin\K(\Ltwo{\G}))$, both $\Ww(a\tensor e_{\z})$
and $(b\tensor e_{\eta})\Ww^{*}$ belong to $\CzU{\G}\tensormin\K(\Ltwo{\G})$.
As a result, approximating them in norm by elements of the corresponding
algebraic tensor product, we see that $\left(\mu_{\be}\right)$ being
bounded and the fact that $\mu_{\be}\xrightarrow{w^{*}}\mu$ imply
that $(a\mu_{\be}b)*\om\xrightarrow{\left\Vert \cdot\right\Vert }(a\mu b)*\om$.
By using the universal version of the unitary antipode $R_{\mathrm{u}}:\CzU{\G}\to\CzU{\G}$
and its properties \citep[Proposition 7.2]{Kustermans__LCQG_universal},
we conclude that also $\om*(a\mu_{\be}b)\xrightarrow{\left\Vert \cdot\right\Vert }\om*(a\mu b)$.
\end{proof}

\begin{proof}[Proof of \prettyref{thm:top_on_C_0_u}]
Let $\left(\mu_{\be}\right)$ be a net in $\CzU{\G}^{*}$ and $\mu\in\CzU{\G}^{*}$
be such that $\mu_{\be}\xrightarrow{nw^{*}}\mu$, and let $\om\in\Lone{\G}$
and $\e>0$. Fix an approximate identity $\left(e_{\a}\right)$ for
$\CzU{\G}$. By invoking \prettyref{lem:approx_iden_and_dual} twice,
we find $\a_{1},\a_{2},\be_{1}$ such that $\left\Vert e_{\a_{1}}\mu_{\be}e_{\a_{2}}-\mu_{\be}\right\Vert <\e$
for every $\be\geq\be_{1}$ and $\left\Vert e_{\a_{1}}\mu e_{\a_{2}}-\mu\right\Vert <\e$.
From \prettyref{lem:top_on_C_0_u__bw_star__multiplier}, there is
$\be_{2}$ such that 
\[
\left\Vert (e_{\a_{1}}\mu_{\be}e_{\a_{2}})*\om-(e_{\a_{1}}\mu e_{\a_{2}})*\om\right\Vert ,\left\Vert \om*(e_{\a_{1}}\mu_{\be}e_{\a_{2}})-\om*(e_{\a_{1}}\mu e_{\a_{2}})\right\Vert <\e
\]
for every $\be\geq\be_{2}$. We conclude that the strict topology
is weaker than $\tau_{nw^{*}}$.
\end{proof}
We now generalize most of \citep[Theorem D]{Granirer_Leinert__top_coincide}
for $1\leq p\leq2$.
\begin{cor}
\label{cor:top_on_C_0_u__Lp}Let $1\leq p\leq2$. On $\CzU{\G}^{*}$,
the $p$-strict topology is weaker than $\tau_{nw^{*}}$, and on bounded
sets, the $w^{*}$-topology is weaker than the weak $p$-strict topology.\end{cor}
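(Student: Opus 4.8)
I need to prove two statements about the corollary, and both should follow by transporting the topological comparison of Theorem \ref{thm:top_on_C_0_u} from the strict topology to the $p$-strict setting, using the convolution operators $\mu *^p$ constructed in Theorem \ref{thm:convolution_Lp}. The key technical fact I have available is that $\mu \mapsto \mu *^p$ is a contraction with norm $\le \|\mu\|$, and that for $\mu \in \CzU{\G}^*$ and $\om \in \mathcal{I} = \Lone{\G} \cap \Ltwo{\G}$ the action $\mu *^p \om$ interpolates between $\mu *^1 \om = \mu * \om$ and $\mu *^2 \om = \l^{\mathrm{u}}(\mu)\xi(\om)$.

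For the first assertion, suppose $\mu_\be \xrightarrow{nw^*} \mu$; I want $\|\mu_\be *^p \om - \mu *^p \om\|_p \to 0$ for every $\om \in L^p(\G)_{\mathrm{left}}$. Since $\mathcal{I}$ is dense in $L^p(\G)_{\mathrm{left}}$ for $1 < p < 2$ (and trivially at the endpoints $p=1,2$), and since the operators $\mu_\be *^p$ are uniformly bounded — their norms are bounded by $\|\mu_\be\| \to \|\mu\|$, hence eventually bounded — it suffices by a standard $3\e$-approximation to verify convergence on the dense subspace $\mathcal{I}$. The first plan is therefore to fix $\om \in \mathcal{I}$ and estimate $\|(\mu_\be - \mu) *^p \om\|_p$ by interpolating: I would apply the exact-interpolation-functor bound to the operator $(\mu_\be - \mu)*^{\cdot}$ acting on the pair $(\Ltwo{\G}, \Lone{\G})$, getting
\[
\|(\mu_\be-\mu)*^p\om\|_p \le \|(\mu_\be-\mu)*\om\|_1^{1-\theta}\,\|\l^{\mathrm{u}}(\mu_\be-\mu)\xi(\om)\|_2^{\theta}
\]
with $\theta = \frac{2}{p}-1$. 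The first factor tends to $0$ by Theorem \ref{thm:top_on_C_0_u} (the strict topology is weaker than $\tau_{nw^*}$), and the second factor is controlled since $\|\l^{\mathrm{u}}(\mu_\be-\mu)\| \le \|\mu_\be - \mu\|$ stays bounded. The main subtlety here is that $\mu_\be - \mu$ need not converge to $0$ in norm, so the $L^2$-factor does not vanish; I must instead arrange the interpolation so that a genuinely-vanishing factor (the $L^1$ one, from strict convergence) dominates while the other stays bounded. This works precisely because the exponent $\theta$ is strictly positive for $p < 2$, and at the endpoints $p=1$ (strict topology, already done) and $p=2$ the claim must be handled separately or absorbed.

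For the second assertion — on bounded sets, $w^*$ is weaker than the weak $p$-strict topology — I would argue by testing against the duality pairing developed in Subsection \ref{sub:Lp_duality}. Suppose $(\mu_\be)$ is bounded and converges to $\mu$ in the weak $p$-strict topology, meaning $\mu_\be *^p \om \to \mu *^p \om$ weakly in $L^p(\G)_{\mathrm{left}}$ for every $\om$. I want to recover $\mu_\be \xrightarrow{w^*} \mu$, i.e.\ $\mu_\be(c) \to \mu(c)$ for every $c \in \CzU{\G}$. The plan is to realize the $w^*$-pairing through the weak $p$-strict data: for suitable $\om \in \mathcal{I}$ and $y \in L$, the identity $(\om \mid l^q(y))_p = \om(y^*)$ from the duality discussion lets me express $\langle \mu_\be *^p \om, l^q(y)\rangle_p$ in terms of $(\mu_\be * \om)(y^*)$, which by the convolution formula equals an evaluation of $\mu_\be$ against an element of $\CzU{\G}$ built from $\om$, $y$ and the universal corepresentation $\Ww$. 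Since elements of the form arising from $\om \cdot a$ span a dense subset of $\Cz{\G}$, and boundedness lets me pass density arguments through, I expect to reconstruct all of $\mu_\be(c) \to \mu(c)$.

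The hardest step, I anticipate, is the second assertion: verifying that the weak $p$-strict convergence really captures enough of the pairing against $\CzU{\G}$ to force $w^*$-convergence, as opposed to merely $w^*$-convergence on the ideal $\Cz{\G}^* \subseteq \CzU{\G}^*$. The resolution should use that the functionals obtained by convolving against $\mathcal{I}$-elements and pairing via $(\cdot \mid \cdot)_p$ separate points of $\CzU{\G}$ together with boundedness of the net; the boundedness hypothesis is essential here precisely because it allows the density-based separation argument to upgrade from a dense subspace to all of $\CzU{\G}$. For the first assertion the routine but genuine obstacle is the uniform boundedness needed for the $3\e$-argument, which is supplied directly by $\|\mu_\be *^p\| \le \|\mu_\be\| \to \|\mu\|$.
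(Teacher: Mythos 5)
Your treatment of the second assertion is essentially the paper's own argument: the duality pairing $\left(\cdot|\cdot\right)_{p}$ of \prettyref{sub:Lp_duality} turns weak $p$-strict convergence into $((\mu_{\be}-\mu)*\om)(y^{*})\to0$ for $\om\in\mathcal{I}$ and $y\in L$, the density of $L\cap\Cz{\G}$ in $\Cz{\G}$ together with boundedness upgrades this to $w^{*}$-convergence of $(\mu_{\be}-\mu)*\om$ in $\CzU{\G}^{*}$, and the density in $\CzU{\G}$ of the elements $(\i\tensor\om)(\Ww^{*}(\one\tensor b)\Ww)$, whose evaluations are exactly $(\mu*\om)(b)$, recovers $\mu_{\be}\xrightarrow{w^{*}}\mu$. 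No complaints there.

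For the first assertion there is one genuine misconception, and it is what leaves your endpoint $p=2$ open. You assert that the $L^{2}$-factor $\left\Vert \l^{\mathrm{u}}(\mu_{\be}-\mu)\xi(\om)\right\Vert _{2}$ ``does not vanish'' and is merely bounded. It does vanish: $\tau_{nw^{*}}$-convergence gives strict convergence by \prettyref{thm:top_on_C_0_u}, hence $\l^{\mathrm{u}}(\mu_{\be})\to\l^{\mathrm{u}}(\mu)$ strictly in $M(\Cz{\hat{\G}})$ by \prettyref{thm:top_on_C_0_u__2}, \prettyref{enu:top_on_C_0_u__strict}$\implies$\prettyref{enu:top_on_C_0_u__unif_comp}, and for a bounded net in the multiplier algebra of a $C^{*}$-algebra acting nondegenerately on $\Ltwo{\G}$ this yields strong operator convergence, so $\l^{\mathrm{u}}(\mu_{\be}-\mu)\xi\to0$ for every $\xi\in\Ltwo{\G}$. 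This single observation both settles $p=2$ and makes your interpolation inequality unnecessary: once the $\Lone{\G}$- and $\Ltwo{\G}$-norms of $(\mu_{\be}-\mu)*\om$ both tend to zero, so does $\left\Vert (\mu_{\be}-\mu)*\om\right\Vert _{\mathcal{I}}=\max(\cdot,\cdot)$, and the contractivity of the embedding $(\mathcal{I},\left\Vert \cdot\right\Vert _{\mathcal{I}})\hookrightarrow L^{p}(\G)_{\mathrm{left}}$ (which holds since $\mathcal{I}=\Ltwo{\G}\cap\Lone{\G}$ in $R^{*}$ and $A_{0}\cap A_{1}\hookrightarrow C_{\theta}(A_{0},A_{1})$ is contractive) gives $\left\Vert (\mu_{\be}-\mu)*^{p}\om\right\Vert _{p}\to0$ uniformly in $1\leq p\leq2$; this is the paper's route. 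Two smaller points about your displayed inequality: the exact-interpolation-functor bound applied to the operator $(\mu_{\be}-\mu)*^{\cdot}$ controls only operator norms, which do not tend to zero here; the pointwise bound you wrote is instead the logarithmic convexity of interpolation norms for the element $(\mu_{\be}-\mu)*\om\in A_{0}\cap A_{1}$ (obtainable by interpolating the map $\lambda\mapsto\lambda\,(\mu_{\be}-\mu)*\om$ from $\C$), and with the paper's convention $C_{2/p-1}(\Ltwo{\G},\Lone{\G})=L^{p}(\G)_{\mathrm{left}}$ the exponents should read $\left\Vert \cdot\right\Vert _{2}^{1-\theta}\left\Vert \cdot\right\Vert _{1}^{\theta}$. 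Neither slip is fatal for $1<p<2$, since both exponents are then strictly positive, but the argument as proposed does not close the case $p=2$ without the vanishing of the $L^{2}$-factor.
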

\begin{proof}
Let $\left(\mu_{\be}\right)$ be a net in $\CzU{\G}^{*}$ and $\mu\in\CzU{\G}^{*}$.
We use \prettyref{thm:convolution_Lp} and its notation. Suppose that
$\mu_{\be}\xrightarrow{nw^{*}}\mu$. Let $\om\in\mathcal{I}$ and
$\xi:=\xi(\om)$ (so $\om=\xi$ in $R^{*}$). By \prettyref{thm:top_on_C_0_u},
$(\mu_{\be}-\mu)*^{1}\om\to0$ in $\Lone{\G}$. Moreover, $(\mu_{\be}-\mu)*^{2}\xi=\l^{\mathrm{u}}(\mu_{\be}-\mu)\xi\to0$
in $\Ltwo{\G}$ (see \prettyref{thm:top_on_C_0_u__2}, \prettyref{enu:top_on_C_0_u__strict}$\implies$\prettyref{enu:top_on_C_0_u__unif_comp}
below). Since the canonical embedding $(\mathcal{I},\left\Vert \cdot\right\Vert _{\mathcal{I}})\hookrightarrow(L^{p}(\G)_{\mathrm{left}},\left\Vert \cdot\right\Vert _{p})$
is contractive, we infer that $(\mu_{\be}-\mu)*^{p}\om\to0$ in $L^{p}(\G)_{\mathrm{left}}$.
That embedding has dense range and $\left((\mu_{\be}-\mu)*^{p}\right)_{\be}$
is bounded in $B(L^{p}(\G)_{\mathrm{left}})$; hence $(\mu_{\be}-\mu)*^{p}\om\to0$
for all $\om\in L^{p}(\G)_{\mathrm{left}}$.

For the second statement, suppose that $\left(\mu_{\be}\right)$ is
bounded and that $\mu_{\be}\to\mu$ in the weak $p$-strict topology.
We claim that $(\mu_{\be}-\mu)*\om\to0$ in the $w^{*}$-topology
for every $\om\in\Lone{\G}$. Assume for the moment that $p>1$ and
let $q\in[2,\infty)$ be the conjugate of $p$. Let $\om\in\mathcal{I}$
and $y\in L$. If $(\mu_{\be}-\mu)*^{p}\om\to0$ weakly, then by \prettyref{sub:Lp_duality},
we have 
\begin{equation}
((\mu_{\be}-\mu)*\om)(y^{*})=\left((\mu_{\be}-\mu)*^{p}\om|l^{q}(y)\right)_{p}\to0.\label{eq:top_on_C_0_u__Lp__Lp_Lq}
\end{equation}
Denoting by $\mathcal{T}_{\varphi}$ the Tomita algebra of $\varphi$,
the set $\left\{ ab:a,b\in\mathcal{T}_{\varphi}\right\} $ is contained
in $L$ by \citep[Proposition 2.3]{Izumi__noncomm_Lp}. As $\varphi|_{\Cz{\G}_{+}}$
is a $C^{*}$-algebraic KMS weight on $\Cz{\G}$ whose modular automorphism
group is the restriction of that of $\varphi$ to $\Cz{\G}$ \citep[Proposition 1.6 and its proof]{Kustermans_Vaes__LCQG_von_Neumann},
$\mathcal{T}_{\varphi}\cap\Cz{\G}$ is norm dense in $\Cz{\G}$. Hence
$L\cap\Cz{\G}$ is norm dense in $\Cz{\G}$, and $\pi_{\mathrm{u}}^{-1}(L\cap\Cz{\G})$
is norm dense in $\CzU{\G}$. Consequently, \prettyref{eq:top_on_C_0_u__Lp__Lp_Lq}
implies that as elements of $\CzU{\G}^{*}$, $(\mu_{\be}-\mu)*\om\to0$
pointwise on a norm dense subset of $\CzU{\G}$, which, by the boundedness
of $\left(\mu_{\be}\right)$, implies that $(\mu_{\be}-\mu)*\om\to0$
in the $w^{*}$-topology. By density of $\mathcal{I}$ in $\Lone{\G}$
and boundedness again, this holds for every $\om\in\Lone{\G}$, as
claimed. In the case that $p=1$ we have the same result, since the
assumption that $(\mu_{\be}-\mu)*\om\to0$ in the $w$-topology $\sigma(\Lone{\G},\Linfty{\G})$
is formally stronger.

Since $\left\{ (\i\tensor\om)(\Ww^{*}(\one\tensor b)\Ww):\om\in\Lone{\G},b\in C_{0}(\G)\right\} $
is dense in $\CzU{\G}$ and $\left(\mu_{\be}\right)$ is bounded,
we infer from the claim that $\mu_{\be}\to\mu$ in the $w^{*}$-topology.
\end{proof}
Let $G$ be a locally compact group. If $(g_{\be})$ is a bounded
net in $B(G)$ and $g\in B(G)$, then $g_{\be}\to g$ uniformly on
the compact subsets of $G$ if and only if $fg_{\be}\to fg$ in the
$C_{0}(G)$ norm for every $f\in C_{0}(G)$. Indeed, one direction
is trivial, and for the other, notice that $(g_{\be})$ is bounded
in $C_{b}(G)$ since $\left\Vert \cdot\right\Vert _{C_{b}(G)}\leq\left\Vert \cdot\right\Vert _{B(G)}$.
Hence, the following result generalizes \citep[Theorem B$_2$]{Granirer_Leinert__top_coincide}.
\begin{thm}
\label{thm:top_on_C_0_u__2}Let $\G$ be a LCQG and let $S$ denote
the unit sphere of $\CzU{\G}^{*}$. If $\left(\mu_{\be}\right)$ is
a net in $S$ and $\mu\in S$, then the following are equivalent:
\begin{enumerate}
\item \label{enu:top_on_C_0_u__w_star}$\mu_{\be}\to\mu$ in the $w^{*}$-topology;
\item \label{enu:top_on_C_0_u__unif_comp}$\l^{\mathrm{u}}(\mu_{\be})\to\l^{\mathrm{u}}(\mu)$
in the strict topology on $M(\Cz{\hat{\G}})$;
\item \label{enu:top_on_C_0_u__action_on_C_0}$\mu_{\be}\cdot a\to\mu\cdot a$
and $a\cdot\mu_{\be}\to a\cdot\mu$ in $C_{0}(\G)$ for every $a\in C_{0}(\G)$
(see \prettyref{lem:C_0__C_0_u_star});
\item \label{enu:top_on_C_0_u__action_on_L_infty}$\mu_{\be}\cdot a\to\mu\cdot a$
and $a\cdot\mu_{\be}\to a\cdot\mu$ in the $w^{*}$-topology $\sigma(\Linfty{\G},\Lone{\G})$
for every $a\in\Linfty{\G}$, that is: $\mu_{\be}*\om\to\mu*\om$
and $\om*\mu_{\be}\to\om*\mu$ in the $w$-topology $\sigma(\Lone{\G},\Linfty{\G})$
for every $\om\in\Lone{\G}$;
\item \label{enu:top_on_C_0_u__two_products_norm}$(\mu_{\be}*\om)a\to(\mu*\om)a$
and $a(\mu_{\be}*\om)\to a(\mu*\om)$ in $\Lone{\G}$ for every $a\in C_{0}(\G)$,
$\om\in\Lone{\G}$;
\item \label{enu:top_on_C_0_u__two_products_weak}$(\mu_{\be}*\om)a\to(\mu*\om)a$
and $a(\mu_{\be}*\om)\to a(\mu*\om)$ in the $w$-topology $\sigma(\Lone{\G},\Linfty{\G})$
for every $a\in C_{0}(\G)$, $\om\in\Lone{\G}$;
\item \label{enu:top_on_C_0_u__strict}$\mu_{\be}\to\mu$ in the strict
topology;
\item \label{enu:top_on_C_0_u__Lp}for some $1\leq p\leq2$, $\mu_{\be}\to\mu$
in the $p$-strict topology;
\item \label{enu:top_on_C_0_u__Lp_weakly}for some $1\leq p\leq2$, $\mu_{\be}\to\mu$
in the weak $p$-strict topology.
\end{enumerate}
\end{thm}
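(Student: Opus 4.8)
The plan is to show that each of the nine conditions is equivalent to the $w^{*}$-convergence \prettyref{enu:top_on_C_0_u__w_star}, exploiting the unit sphere $S$ in two ways: on $S$ the norm requirement in $\tau_{nw^{*}}$ is automatic, so $w^{*}$- and $nw^{*}$-convergence coincide there; and every net is uniformly bounded, which activates all of the ``convergence on a norm-dense set plus boundedness implies convergence'' arguments. Two blocks of the equivalence are essentially handed to us. First, since on $S$ condition \prettyref{enu:top_on_C_0_u__w_star} is exactly $nw^{*}$-convergence, \prettyref{thm:top_on_C_0_u} yields \prettyref{enu:top_on_C_0_u__w_star}$\implies$\prettyref{enu:top_on_C_0_u__strict}. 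Second, \prettyref{cor:top_on_C_0_u__Lp} gives \prettyref{enu:top_on_C_0_u__w_star}$\implies$\prettyref{enu:top_on_C_0_u__Lp} (for every $p$) and, on bounded sets, \prettyref{enu:top_on_C_0_u__Lp_weakly}$\implies$\prettyref{enu:top_on_C_0_u__w_star}; together with the trivial \prettyref{enu:top_on_C_0_u__Lp}$\implies$\prettyref{enu:top_on_C_0_u__Lp_weakly} (norm convergence in $L^{p}(\G)_{\mathrm{left}}$ implies weak convergence) this settles the mutual equivalence of \prettyref{enu:top_on_C_0_u__w_star}, \prettyref{enu:top_on_C_0_u__Lp} and \prettyref{enu:top_on_C_0_u__Lp_weakly}.

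For the remaining conditions I route everything back to \prettyref{enu:top_on_C_0_u__w_star} through the single identity
\[
\mu\bigl((\i\tensor\om)(\Ww^{*}(\one\tensor b)\Ww)\bigr)=(\mu*\om)(b)\qquad(\om\in\Lone{\G},\ b\in\Cz{\G})
\]
from the proof of \prettyref{lem:top_on_C_0_u__bw_star__multiplier}, combined with the density of $\left\{ (\i\tensor\om)(\Ww^{*}(\one\tensor b)\Ww):\om\in\Lone{\G},b\in\Cz{\G}\right\} $ in $\CzU{\G}$. The point is that each of \prettyref{enu:top_on_C_0_u__unif_comp}--\prettyref{enu:top_on_C_0_u__strict} forces $(\mu_{\be}*\om)(b)\to(\mu*\om)(b)$ for all $\om$ and $b$, whereupon the density and the uniform bound on $S$ deliver \prettyref{enu:top_on_C_0_u__w_star}. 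Concretely: evaluating the convergence in \prettyref{enu:top_on_C_0_u__two_products_norm} or \prettyref{enu:top_on_C_0_u__two_products_weak} at $\one\in\Linfty{\G}$ gives $((\mu_{\be}*\om)b)(\one)=(\mu_{\be}*\om)(b)$; evaluating the weak convergence $\mu_{\be}*\om\to\mu*\om$ of \prettyref{enu:top_on_C_0_u__action_on_L_infty} at $b\in\Cz{\G}$ does the same; applying $\om$ to \prettyref{enu:top_on_C_0_u__action_on_C_0} uses $\om(a\cdot\mu)=(\mu*\om)(a)$; for \prettyref{enu:top_on_C_0_u__unif_comp} I apply an arbitrary $\hat{\om}\in\Lone{\hat{\G}}$ to $\l^{\mathrm{u}}(\mu_{\be})\l(\om)-\l^{\mathrm{u}}(\mu)\l(\om)=\l((\mu_{\be}-\mu)*\om)$ and use $\hat{\om}(\l(\nu))=\nu((\i\tensor\hat{\om})(W))$ with $(\i\tensor\hat{\om})(W)$ ranging over a dense subset of $\Cz{\G}$; and \prettyref{enu:top_on_C_0_u__strict} contains the statement outright.

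In the forward direction I start from the strict convergence \prettyref{enu:top_on_C_0_u__strict}, that is $\left\Vert (\mu_{\be}-\mu)*\om\right\Vert \to0$ and $\left\Vert \om*(\mu_{\be}-\mu)\right\Vert \to0$ for every $\om\in\Lone{\G}$, and deduce the rest by contractivity of the relevant module actions. Conditions \prettyref{enu:top_on_C_0_u__two_products_norm} and \prettyref{enu:top_on_C_0_u__action_on_L_infty}, hence also \prettyref{enu:top_on_C_0_u__two_products_weak}, follow from $\left\Vert (\mu*\om)a\right\Vert \leq\left\Vert a\right\Vert \left\Vert \mu*\om\right\Vert $ and the passage from norm to weak convergence. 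For \prettyref{enu:top_on_C_0_u__action_on_C_0} I invoke the density of $\left\{ \om\cdot b\right\} $ and $\left\{ b\cdot\om\right\} $ in $\Cz{\G}$ together with the associativity relations $\mu\cdot(\om\cdot b)=(\mu*\om)\cdot b$ and $(b\cdot\om)\cdot\mu=b\cdot(\om*\mu)$ recorded in the Preliminaries, reducing matters to $\left\Vert ((\mu_{\be}-\mu)*\om)\cdot b\right\Vert \leq\left\Vert (\mu_{\be}-\mu)*\om\right\Vert \left\Vert b\right\Vert \to0$ and its right-handed counterpart. For \prettyref{enu:top_on_C_0_u__unif_comp} I use that $\l^{\mathrm{u}}$ is a homomorphism extending $\l$, so that $\l^{\mathrm{u}}(\mu)\l(\om)=\l(\mu*\om)$ and $\l(\om)\l^{\mathrm{u}}(\mu)=\l(\om*\mu)$; after approximating $\hat{a}\in\Cz{\hat{\G}}$ by elements of the dense range of $\l$ and using $\left\Vert \l^{\mathrm{u}}(\mu_{\be})\right\Vert \leq1$, the two strict semi-norms on $M(\Cz{\hat{\G}})$ are controlled by the two sides of \prettyref{enu:top_on_C_0_u__strict}. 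I will establish \prettyref{enu:top_on_C_0_u__strict}$\implies$\prettyref{enu:top_on_C_0_u__unif_comp} directly, as above, so that no circularity arises from \prettyref{cor:top_on_C_0_u__Lp} (whose proof invokes precisely this implication).

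The genuine analytic depth is already absorbed into \prettyref{thm:top_on_C_0_u} and \prettyref{cor:top_on_C_0_u__Lp}, so what remains is largely bookkeeping; the one place that needs real care is the two-sidedness. Each condition splits into a ``left'' and a ``right'' half, and these must be matched correctly to the two semi-norms $\om\mapsto\left\Vert \mu*\om\right\Vert $ and $\om\mapsto\left\Vert \om*\mu\right\Vert $ defining \prettyref{enu:top_on_C_0_u__strict}; I expect \prettyref{enu:top_on_C_0_u__unif_comp} to be the most delicate, since there the products $\l^{\mathrm{u}}(\mu)\hat{a}$ and $\hat{a}\l^{\mathrm{u}}(\mu)$ must be pushed through the homomorphism identities for $\l^{\mathrm{u}}$ on opposite sides. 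Throughout, restricting to the unit sphere $S$ is indispensable: it both identifies \prettyref{enu:top_on_C_0_u__w_star} with $nw^{*}$-convergence, so that the cited results apply verbatim, and furnishes the uniform bound on which every density argument rests.
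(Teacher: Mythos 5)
Your proof is correct and follows essentially the same route as the paper's: the analytic substance is delegated to \prettyref{thm:top_on_C_0_u} and \prettyref{cor:top_on_C_0_u__Lp}, the forward implications from \prettyref{enu:top_on_C_0_u__strict} use the same contractivity/homomorphism identities, and the converse directions rest on the same density-plus-boundedness arguments (your uniform reduction of all of them to the density of $\left\{ (\i\tensor\om)(\Ww^{*}(\one\tensor b)\Ww)\right\}$ in $\CzU{\G}$ is just a cleaner packaging of what the paper does, and your explicit handling of the forward reference between \prettyref{cor:top_on_C_0_u__Lp} and the implication \prettyref{enu:top_on_C_0_u__strict}$\implies$\prettyref{enu:top_on_C_0_u__unif_comp} correctly rules out circularity). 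No gaps.
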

\begin{proof}
From \prettyref{thm:top_on_C_0_u} and \prettyref{cor:top_on_C_0_u__Lp},
conditions \prettyref{enu:top_on_C_0_u__w_star}, \prettyref{enu:top_on_C_0_u__action_on_L_infty},
\prettyref{enu:top_on_C_0_u__strict}, \prettyref{enu:top_on_C_0_u__Lp}
and \prettyref{enu:top_on_C_0_u__Lp_weakly} are equivalent. It is
clear that \prettyref{enu:top_on_C_0_u__strict}$\implies$ \prettyref{enu:top_on_C_0_u__two_products_norm}$\implies$\prettyref{enu:top_on_C_0_u__two_products_weak}. 

\prettyref{enu:top_on_C_0_u__strict}$\implies$\prettyref{enu:top_on_C_0_u__unif_comp}:
since $\l^{\mathrm{u}}$ is a homomorphism, we have $\l^{\mathrm{u}}(\mu_{\be})\l(\om)\to\l^{\mathrm{u}}(\mu)\l(\om)$
and $\l(\om)\l^{\mathrm{u}}(\mu_{\be})\to\l(\om)\l^{\mathrm{u}}(\mu)$
for every $\om\in\Lone{\G}$. As $\{\l(\om):\om\in\Lone{\G}\}$ is
norm dense in $\Cz{\hat{\G}}$ and $\left(\l^{\mathrm{u}}(\mu_{\be})\right)$
is bounded, we conclude that $\l^{\mathrm{u}}(\mu_{\be})\to\l^{\mathrm{u}}(\mu)$
in the strict topology on $M(\Cz{\hat{\G}})$.

\prettyref{enu:top_on_C_0_u__unif_comp}$\implies$\prettyref{enu:top_on_C_0_u__w_star}:
since $\l^{\mathrm{u}}(\mu_{\be})\to\l^{\mathrm{u}}(\mu)$ in the
strict topology on $M(\Cz{\hat{\G}})$ and $\left(\l^{\mathrm{u}}(\mu_{\be})\right)$
is bounded, this convergence holds in the ultraweak topology as well.
So for all $\hat{\om}\in\Lone{\hat{\G}}$, we have 
\[
(\mu_{\be}-\mu)((\i\tensor\hat{\om})(\Ww))=\hat{\om}(\l^{\mathrm{u}}(\mu_{\be}-\mu))\to0.
\]
As $\{(\i\tensor\hat{\om})(\Ww):\hat{\om}\in\Lone{\hat{\G}}\}$ is
dense in $\CzU{\G}$ and $\left(\mu_{\be}\right)$ is bounded, we
infer that $\mu_{\be}\to\mu$ in the $w^{*}$-topology.

\prettyref{enu:top_on_C_0_u__strict}$\implies$\prettyref{enu:top_on_C_0_u__action_on_C_0}:
we may assume that $a=\om\cdot b$ for some $\om\in\Lone{\G}$ and
$b\in C_{0}(\G)$, because the set of these elements spans a dense
subset of $C_{0}(\G)$. Hence 
\[
(\mu_{\be}-\mu)\cdot a=(\mu_{\be}-\mu)\cdot(\om\cdot b)=((\mu_{\be}-\mu)*\om)\cdot b\to0,
\]
and similarly $a\cdot(\mu_{\be}-\mu)\to0$.

The proofs of \prettyref{enu:top_on_C_0_u__action_on_C_0}$\implies$\prettyref{enu:top_on_C_0_u__w_star}
and \prettyref{enu:top_on_C_0_u__two_products_weak}$\implies$\prettyref{enu:top_on_C_0_u__w_star}
are left to the reader (see the proof of \prettyref{cor:top_on_C_0_u__Lp},
and use that $\Cz{\G}^{2}=\Cz{\G}$).\end{proof}
\begin{rem}
In view of \prettyref{thm:top_on_C_0_u__2}, the following is noteworthy.
Let $\G$ be a compact quantum group. Generalizing a classical result
about discrete groups, Kyed \citep[Theorem 3.1]{Kyed__cohom_prop_T_QG}
proved that the discrete dual $\hat{\G}$ has property (T) if and
only if every net of states of $C^{\mathrm{u}}(\G)$, converging in
the $w^{*}$-topology to the co-unit, converges in norm.
\end{rem}
A classical result \citep{Raikov__types_of_conv,Yoshizawa__convergence_PD_func}
says that if $G$ is a locally compact group, then on the set of positive-definite
functions of $\Linfty G$-norm $1$, the $w^{*}$-topology $\sigma(\Linfty G,\Lone G)$
and the topology of uniform convergence on compact subsets coincide.
The following generalizes this to LCQGs.
\begin{thm}
\label{thm:topologies_in_CPD1}Assume that $\G$ is a co-amenable
LCQG. On the subset $S$ of $M(C_{0}(\G))$ consisting of all positive-definite
elements of norm $1$, the strict topology induced by $C_{0}(\G)$
coincides with the weak and the strong operator topologies on $\Ltwo{\G}$.\end{thm}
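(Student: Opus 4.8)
The plan is to identify $S$ with the state space of $\CzU{\hat{\G}}$ and to play the two operator topologies against the strict topology through this identification, using the results of the present section applied to $\hat{\G}$.

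First I would set up the dictionary. Since $\G$ is co-amenable, \prettyref{thm:pos_def} shows that every $x\in S$ has the form $x=\hat{\l}^{\mathrm{u}}(\hat{\mu})=(\i\tensor\hat{\mu})(\wW^{*})$ for some $\hat{\mu}\in\CzU{\hat{\G}}_{+}^{*}$, and \prettyref{rem:CPD_norm} gives $\|\hat{\mu}\|=\|x\|=1$, so $\hat{\mu}$ is a state. For $\om\in\Lone{\G}$ put $c_{\om}:=(\om\tensor\i)(\wW^{*})$; then $\om(x)=(\om\tensor\hat{\mu})(\wW^{*})=\hat{\mu}(c_{\om})$. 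The structural fact I would isolate is that $\linspan\{c_{\om}:\om\in\Lone{\G}\}$ is dense in $\CzU{\hat{\G}}$. Indeed, $\l_{\mathrm{u}}(\om)=(\om\tensor\i)(\wW)$ for $\om\in\LoneStar{\G}$, and $\l_{\mathrm{u}}(\LoneStar{\G})$ is dense in $\CzU{\hat{\G}}$ by the very definition of the universal dual as the enveloping $C^{*}$-algebra of $\LoneStar{\G}$; as $\l_{\mathrm{u}}$ is a $*$-homomorphism, the adjoints $\l_{\mathrm{u}}(\om)^{*}=c_{\bar{\om}}$ (with $\bar{\om}\in\Lone{\G}$) are dense as well. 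In particular $\hat{\mu}$ is uniquely determined by $x$, so $x\mapsto\hat{\mu}$ is a well-defined bijection of $S$ onto the state space of $\CzU{\hat{\G}}$.

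With this in hand I would prove the cycle \emph{strict} $\implies$ \emph{SOT} $\implies$ \emph{WOT} $\implies$ \emph{strict}. For \emph{strict} $\implies$ \emph{SOT}: if $x_{\be}\to x$ strictly, then $\|(x_{\be}-x)a\|\to0$ for every $a\in C_{0}(\G)$; since $C_{0}(\G)$ acts non-degenerately on $\Ltwo{\G}$, vectors $a\zeta$ ($a\in C_{0}(\G)$, $\zeta\in\Ltwo{\G}$) are dense, and $\|(x_{\be}-x)a\zeta\|\leq\|(x_{\be}-x)a\|\,\|\zeta\|\to0$; the uniform bound $\|x_{\be}-x\|\leq2$ on $S$ then upgrades this to $\|(x_{\be}-x)\xi\|\to0$ for all $\xi\in\Ltwo{\G}$, i.e.\ SOT convergence. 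The step \emph{SOT} $\implies$ \emph{WOT} is automatic. For \emph{WOT} $\implies$ \emph{strict}: on the bounded set $S$ the weak operator topology coincides with $\sigma(\Linfty{\G},\Lone{\G})$, so WOT convergence means $\om(x_{\be})\to\om(x)$, that is $\hat{\mu}_{\be}(c_{\om})\to\hat{\mu}(c_{\om})$, for every $\om\in\Lone{\G}$. By the density of $\linspan\{c_{\om}\}$ together with $\|\hat{\mu}_{\be}\|=1$, this forces $\hat{\mu}_{\be}\to\hat{\mu}$ in the $w^{*}$-topology. Now I would invoke \prettyref{thm:top_on_C_0_u__2} applied to $\hat{\G}$: the states $\hat{\mu}_{\be},\hat{\mu}$ lie in the unit sphere of $\CzU{\hat{\G}}^{*}$, and the implication \prettyref{enu:top_on_C_0_u__w_star}$\implies$\prettyref{enu:top_on_C_0_u__unif_comp}, read for $\hat{\G}$, yields that $\hat{\l}^{\mathrm{u}}(\hat{\mu}_{\be})\to\hat{\l}^{\mathrm{u}}(\hat{\mu})$, i.e.\ $x_{\be}\to x$, in the strict topology on $M(C_{0}(\G))$. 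This closes the cycle.

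The main obstacle, I expect, lies in the two identifications feeding the last implication: verifying that the slices $c_{\om}$ span a dense subspace of the \emph{full} universal algebra $\CzU{\hat{\G}}$ rather than merely of the reduced $\Cz{\hat{\G}}$, and confirming that condition \prettyref{enu:top_on_C_0_u__unif_comp} of \prettyref{thm:top_on_C_0_u__2}, transported to $\hat{\G}$, is precisely strict convergence of the elements $x_{\be}=\hat{\l}^{\mathrm{u}}(\hat{\mu}_{\be})$ in $M(C_{0}(\G))$. Both points hinge on carefully keeping the universal and reduced pictures apart, and this is where co-amenability of $\G$ (but not of $\hat{\G}$) is genuinely used, through \prettyref{thm:pos_def} and \prettyref{rem:CPD_norm}.
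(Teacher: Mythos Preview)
Your proposal is correct and follows essentially the same route as the paper: identify $S$ with the state space of $\CzU{\hat{\G}}$ via $\hat{\l}^{\mathrm{u}}$ using \prettyref{thm:pos_def} and \prettyref{rem:CPD_norm}, observe that WOT convergence on this bounded set corresponds to $w^{*}$-convergence of the associated states, and then invoke \prettyref{thm:top_on_C_0_u__2} \prettyref{enu:top_on_C_0_u__w_star}$\iff$\prettyref{enu:top_on_C_0_u__unif_comp} for $\hat{\G}$ to pass to the strict topology, with the strict$\implies$SOT$\implies$WOT direction being elementary. The paper's proof is more terse, asserting the WOT/$w^{*}$ correspondence directly, whereas you spell it out through the density of $\{c_{\om}\}$ in $\CzU{\hat{\G}}$; but the argument is the same.
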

\begin{proof}
By co-amenability, the map $\hat{\mu}\mapsto\hat{\l}^{\mathrm{u}}(\hat{\mu})$
is an isometric isomorphism between the unit sphere of $\CzU{\hat{\G}}_{+}^{*}$
and $S$ (\prettyref{thm:pos_def} and \prettyref{rem:CPD_norm}).
Now apply \prettyref{thm:top_on_C_0_u__2}, \prettyref{enu:top_on_C_0_u__w_star}$\iff$\prettyref{enu:top_on_C_0_u__unif_comp},
to $\hat{\G}$ in place of $\G$, and notice that for a bounded net
in $\CzU{\hat{\G}}^{*}$, $w^{*}$-convergence is equivalent to convergence
in the weak operator topology of its image under $\hat{\l}^{\mathrm{u}}$.
Moreover, on bounded sets, the strict topology on $M(C_{0}(\G))$
is finer than the strong operator topology.\end{proof}
\begin{rem}
Attempting to prove \prettyref{thm:topologies_in_CPD1} by generalizing
the proof of \citep[Theorem 13.5.2]{Dixmier__C_star_English} yielded
only partially successful: we were able to establish that on $S$,
the weak operator topology coincides with the topology on $M(C_{0}(\G))$
in which a net $\left(x_{\be}\right)$ converges to $x$ if and only
if $yx_{\be}z\to yxz$ for every $y,z\in C_{0}(\G)$. This topology
evidently coincides with the strict one when $\G$ is commutative,
but not generally.

However, it is worth mentioning that taking this approach, one encounters
a straightforward generalization of a very useful inequality, namely
that if $\varphi$ is a (continuous) positive definite function on
a locally compact group $G$, then $\left|\varphi(s)-\varphi(t)\right|^{2}\leq2\varphi(e)(\varphi(e)-\mathrm{Re}\varphi(s^{-1}t))$
for every $s,t\in G$ \citep[Proposition 13.4.7]{Dixmier__C_star_English}.
As $\varphi(s^{-1})=\overline{\varphi(s)}$, that is equivalent to
$\left|\varphi(st)-\varphi(t)\right|^{2}\leq2\varphi(e)(\varphi(e)-\mathrm{Re}\varphi(s))$
for every $s,t\in G$. If $\G$ is a co-amenable LCQG and $y$ is
positive definite over $\G$, write $y=(\i\tensor\hat{\mu})(\wW^{*})$
for a suitable $\hat{\mu}\in\CzU{\hat{\G}}_{+}^{*}$. Now $\Delta(y)-\one\tensor y=(\i\tensor\i\tensor\hat{\mu})(\wW_{23}^{*}(\wW_{13}^{*}-\one))$,
and as $\i\tensor\i\tensor\hat{\mu}$ is a completely positive map
of $cb$-norm $\left\Vert \hat{\mu}\right\Vert =\left\Vert y\right\Vert $,
the Kadison--Schwarz inequality implies that
\begin{equation}
\begin{split}\left[\Delta(y)-\one\tensor y\right]^{*}\left[\Delta(y)-\one\tensor y\right] & \leq\left\Vert y\right\Vert (\i\tensor\i\tensor\hat{\mu})((\wW_{13}-\one)\wW_{23}\wW_{23}^{*}(\wW_{13}^{*}-\one))\\
 & =\left\Vert y\right\Vert (\i\tensor\i\tensor\hat{\mu})(2\one-\wW_{13}-\wW_{13}^{*})\\
 & =\left\Vert y\right\Vert \left[2\left\Vert y\right\Vert \one-(y^{*}+y)\right]\tensor\one.
\end{split}
\label{eq:pos_def_ineq}
\end{equation}

\end{rem}

\section{\label{sec:char_co_amen_of_dual}A characterization of co-amenability
of the dual}

Related to the notion of a positive-definite function is the notion
of a (generally unbounded) positive-definite measure (\citep{Godement__positive_def_func},
\citep[Section 13.7]{Dixmier__C_star_English}). The purpose of this
section is to generalize a classical result of Godement connecting
amenability to positive definiteness (\citep[Proposition 18.3.6]{Dixmier__C_star_English},
originally \citep[pp.~76--77]{Godement__positive_def_func}, see also
Valette \citep{Valette__Godement_amen}).
\begin{defn}
An element $\mu\in\Cz{\G}^{*}$ is called a \emph{bounded positive-definite
measure on $\G$} if $\l(\mu)$ is positive in $M(\Cz{\hat{\G}})$.\end{defn}
\begin{thm}
\label{thm:co_amen__char}Let $\G$ be a co-amenable LCQG. The following
conditions are equivalent:
\begin{enumerate}
\item \label{enu:co_amen__char__1}$\hat{\G}$ is co-amenable;
\item \label{enu:co_amen__char__2}every positive-definite function on $\G$
is the strict limit in $M(\Cz{\G})$ of a bounded net of positive-definite
functions in $\hat{\l}(\Lone{\hat{\G}}_{+})\cap\mathcal{N}_{\varphi}$;
\item \label{enu:co_amen__char__3}every positive-definite function on $\G$
is the strict limit in $M(\Cz{\G})$ of a bounded net of positive-definite
functions in $\hat{\l}(\Lone{\hat{\G}}_{+})$;
\item \label{enu:co_amen__char__4}$\mu(x^{*})\geq0$ for every bounded
positive-definite measure $\mu$ on $\G$ and every positive-definite
function $x$ on $\G$;
\item \label{enu:co_amen__char__5}$\mu(\one_{M(\Cz{\G})})\geq0$ for every
bounded positive-definite measure $\mu$ on $\G$.
\end{enumerate}
\end{thm}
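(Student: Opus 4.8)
The plan is to run the cycle \prettyref{enu:co_amen__char__1}$\Rightarrow$\prettyref{enu:co_amen__char__2}$\Rightarrow$\prettyref{enu:co_amen__char__3}$\Rightarrow$\prettyref{enu:co_amen__char__4}$\Rightarrow$\prettyref{enu:co_amen__char__5}$\Rightarrow$\prettyref{enu:co_amen__char__1}. Two of the links are immediate. The inclusion $\hat{\l}(\Lone{\hat{\G}}_{+})\cap\mathcal{N}_{\varphi}\subseteq\hat{\l}(\Lone{\hat{\G}}_{+})$ yields \prettyref{enu:co_amen__char__2}$\Rightarrow$\prettyref{enu:co_amen__char__3}. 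For \prettyref{enu:co_amen__char__4}$\Rightarrow$\prettyref{enu:co_amen__char__5} I would simply take $x=\one_{M(\Cz{\G})}$, which is positive definite because $(\om^{*}*\om)(\one)=\left|\om(\one)\right|^{2}\geq0$ for all $\om\in\LoneStar{\G}$; then \prettyref{enu:co_amen__char__4} applied to this $x$ gives $\mu(\one)=\mu(\one^{*})\geq0$.

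For \prettyref{enu:co_amen__char__1}$\Rightarrow$\prettyref{enu:co_amen__char__2}, let $x$ be positive definite. By \prettyref{thm:pos_def} and co-amenability of $\hat{\G}$ (so that $\CzU{\hat{\G}}=\Cz{\hat{\G}}$) we may write $x=\hat{\l}^{\mathrm{u}}(\hat{\mu})$ with $\hat{\mu}\in\Cz{\hat{\G}}_{+}^{*}$, and after rescaling we may assume $\hat{\mu}$ is a state, so that $\left\Vert \hat{\mu}\right\Vert =\left\Vert x\right\Vert =1$ by \prettyref{rem:CPD_norm}. Since $\Cz{\hat{\G}}$ acts in standard form on $\Ltwo{\G}$, the normal, square-integrable states are weak$^{*}$-dense in the state space of $\Cz{\hat{\G}}$, so there is a net $(\hat{\nu}_{j})$ of states in $\Lone{\hat{\G}}_{+}\cap\hat{\mathcal{I}}$ with $\hat{\nu}_{j}\to\hat{\mu}$ in the weak$^{*}$-topology. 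As all these functionals lie on the unit sphere of $\CzU{\hat{\G}}^{*}=\Cz{\hat{\G}}^{*}$, \prettyref{thm:top_on_C_0_u__2} applied to $\hat{\G}$ (equivalence of $w^{*}$-convergence with strict convergence of the image under $\hat{\l}^{\mathrm{u}}$) shows that $\hat{\l}(\hat{\nu}_{j})\to x$ strictly and boundedly in $M(\Cz{\G})$. Finally each $\hat{\l}(\hat{\nu}_{j})$ lies in $\hat{\l}(\Lone{\hat{\G}}_{+})$ and, $\hat{\nu}_{j}$ being square integrable, in $\mathcal{N}_{\varphi}$, which is \prettyref{enu:co_amen__char__2}.

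For \prettyref{enu:co_amen__char__3}$\Rightarrow$\prettyref{enu:co_amen__char__4}, the crux is a pairing identity. For $\hat{\om}\in\Lone{\hat{\G}}_{+}$ one computes (using that $\hat{\om}$ is selfadjoint) $\hat{\l}(\hat{\om})^{*}=(\i\tensor\hat{\om})(W)$, so that for any bounded positive-definite measure $\mu$ one has $\mu(\hat{\l}(\hat{\om})^{*})=(\mu\tensor\hat{\om})(W)=\hat{\om}(\l(\mu))\geq0$, the inequality holding because $\l(\mu)\geq0$ in $M(\Cz{\hat{\G}})$ and $\hat{\om}$ is a positive functional. Given a positive-definite $x$ and a bounded net $x_{i}=\hat{\l}(\hat{\om}_{i})\to x$ strictly furnished by \prettyref{enu:co_amen__char__3}, with $\hat{\om}_{i}\in\Lone{\hat{\G}}_{+}$, I would pass to the limit using that $\mu\in\Cz{\G}^{*}$ is strictly continuous on bounded subsets of $M(\Cz{\G})$, obtaining $\mu(x^{*})=\lim_{i}\mu(x_{i}^{*})\geq0$.

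The main obstacle is \prettyref{enu:co_amen__char__5}$\Rightarrow$\prettyref{enu:co_amen__char__1}. The key observation is that, $\G$ being co-amenable, its co-unit $\epsilon\in\Cz{\G}^{*}$ satisfies $\l(\epsilon)=\one_{M(\Cz{\hat{\G}})}$ and $\epsilon(\one)=1$, so the order unit already lies in the range of $\l$. Because $\l^{\mathrm{u}}$ is an injective $*$-homomorphism and $\Cz{\G}^{*}$ is a $*$-closed ideal of $\CzU{\G}^{*}$, the space $V:=\l^{\mathrm{u}}(\Cz{\G}^{*})$ is an operator system in $M(\Cz{\hat{\G}})$ containing $\one$, and $\Phi(\l(\mu)):=\mu(\one)$ is a well-defined linear functional on $V$. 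Condition \prettyref{enu:co_amen__char__5} says precisely that $\Phi\geq0$ on $V\cap M(\Cz{\hat{\G}})_{+}$, while $\Phi(\one)=\epsilon(\one)=1$; thus $\Phi$ is a state of the operator system $V$ and, by Arveson's extension theorem, extends to a state $\hat{\epsilon}_{r}$ on $\Cz{\hat{\G}}$ agreeing with $\Phi$ on $\l(\Lone{\G})$. In particular $\hat{\epsilon}_{r}(\l(\om))=\om(\one)$ for every $\om\in\Lone{\G}$, which unwinds to $(\i\tensor\hat{\epsilon}_{r})(W)=\one$; that is, $\hat{\epsilon}_{r}$ is a co-unit for $\hat{\G}$ living on the reduced algebra, so $\hat{\G}$ is co-amenable by \citep{Bedos_Tuset_2003}. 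I expect the delicate points to be verifying that $V$ is genuinely an operator system (selfadjointness of the $*$-ideal $\Cz{\G}^{*}$ and injectivity of $\l^{\mathrm{u}}$) and, for \prettyref{enu:co_amen__char__1}$\Rightarrow$\prettyref{enu:co_amen__char__2}, the weak$^{*}$-density of square-integrable states used there.
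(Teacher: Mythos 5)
Your cycle is the same as the paper's, and your arguments for \prettyref{enu:co_amen__char__1}$\Rightarrow$\prettyref{enu:co_amen__char__2}$\Rightarrow$\prettyref{enu:co_amen__char__3}$\Rightarrow$\prettyref{enu:co_amen__char__4}$\Rightarrow$\prettyref{enu:co_amen__char__5} coincide with the paper's up to cosmetic differences (the paper justifies the approximation by square-integrable normal states via \citep[Lemma 4.7]{Van_Daele__LCQGs}, and sees that $\one$ is positive definite by writing it as $\hat{\l}^{\mathrm{u}}(\hat{\epsilon})$ rather than by your direct computation; both are fine). The genuine divergence is in \prettyref{enu:co_amen__char__5}$\Rightarrow$\prettyref{enu:co_amen__char__1}: the paper shows that $\one$ lies in the ultraweakly closed cone $\hat{\l}(\Cz{\hat{\G}}_{+}^{*})$ by a bipolar argument in $\Lone{\G}$ (using \prettyref{lem:cone_Q_is_closed} together with a smearing argument that restricts attention to $\om_{0}\in D((\tau_{*})_{-i/2})$), whereas you extend the unital functional $\l(\mu)\mapsto\mu(\one)$ from the operator system $V=\l^{\mathrm{u}}(\Cz{\G}^{*})$ to a state of $M(\Cz{\hat{\G}})$ by a Krein--Riesz extension and read off a reduced co-unit for $\hat{\G}$.

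Your route, however, has a gap exactly at the point you flag: you must prove that $V$ is self-adjoint, i.e.\ that $\Cz{\G}^{*}$ is stable under the involution of $\CzU{\G}^{*}$. This is not a formality. The involution is built from the antipode, whose analytic part $\tau_{-i/2}$ is unbounded; this is precisely why $\Lone{\G}$ is \emph{not} involutive (only $\LoneStar{\G}$ is), and why the paper's proof takes $\om_{0}$ in $Q_{\circ}\cap D((\tau_{*})_{-i/2})$ and symmetrizes to $\overline{\om_{0}}+\overline{\om_{0}}^{*}$. If $V$ were not self-adjoint, the Riesz extension would only control $\tilde{\Phi}$ on the self-adjoint part of $V$, which need not contain $\l(\Lone{\G})$, and your key identity $\hat{\epsilon}_{r}(\l(\om))=\om(\one)$ would not follow. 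The claim is in fact true --- one can check that $\mu^{\star}$ annihilates $\ker\pi_{\mathrm{u}}$ whenever $\mu$ does, by evaluating on $D(S_{\mathrm{u}})\cap\ker\pi_{\mathrm{u}}$ and using a smearing argument to see that this subspace is dense in $\ker\pi_{\mathrm{u}}$ --- but that is a lemma of exactly the same nature as the paper's smearing step, and it must be supplied before your argument is complete. The remainder of your step (well-definedness of $\Phi$ via injectivity of $\l^{\mathrm{u}}$, positivity of $\Phi$ on $V\cap M(\Cz{\hat{\G}})_{+}$ being a literal restatement of \prettyref{enu:co_amen__char__5}, and the passage from $(\i\tensor\hat{\epsilon}_{r})(W)=\one$ to co-amenability of $\hat{\G}$ via \citep[Theorem 3.1]{Bedos_Tuset_2003}) is sound.
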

\begin{lem}
\label{lem:cone_Q_is_closed}Let $\G$ be a co-amenable LCQG. Then
the cone $Q:=\hat{\l}(\Cz{\hat{\G}}_{+}^{*})$ is ultraweakly closed
in $\Linfty{\G}$.\end{lem}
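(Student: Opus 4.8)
The plan is to apply the Krein--\v{S}mulian theorem. Since $\Linfty{\G}=\Lone{\G}^{*}$ and $Q$ is a convex cone (it is the image of the convex cone $\Cz{\hat{\G}}_{+}^{*}$ under the linear map $\hat{\l}$), it is ultraweakly closed as soon as $Q\cap\{x\in\Linfty{\G}:\left\Vert x\right\Vert \leq n\}$ is ultraweakly closed for every $n\in\N$. The first step is to record that $\hat{\l}$ maps $(\Cz{\hat{\G}}^{*},w^{*})$ continuously into $(\Linfty{\G},\sigma(\Linfty{\G},\Lone{\G}))$. Indeed, for $\hat{\nu}\in\Cz{\hat{\G}}^{*}$ one has $\hat{\l}(\hat{\nu})=(\i\tensor\hat{\nu})(W^{*})$, so that for each $\om\in\Lone{\G}$ one computes $\om(\hat{\l}(\hat{\nu}))=\hat{\nu}((\om\tensor\i)(W^{*}))$; since $(\om\tensor\i)(W^{*})=\l(\overline{\om})^{*}\in\Cz{\hat{\G}}$, the functional $\hat{\nu}\mapsto\om(\hat{\l}(\hat{\nu}))$ is evaluation at a fixed element of $\Cz{\hat{\G}}$ and is therefore $w^{*}$-continuous.

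Next I would pin down the norm on $Q$. For $\hat{\nu}\in\Cz{\hat{\G}}_{+}^{*}$, writing $\hat{\mu}:=\hat{\nu}\circ\hat{\pi}_{\mathrm{u}}\in\CzU{\hat{\G}}_{+}^{*}$ and using $(\i\tensor\hat{\pi}_{\mathrm{u}})(\wW)=W$, one sees that $\hat{\l}(\hat{\nu})=(\i\tensor\hat{\mu})(\wW^{*})=\hat{\l}^{\mathrm{u}}(\hat{\mu})$ is a Fourier--Stieltjes transform of the positive measure $\hat{\mu}$, hence positive definite. As $\G$ is co-amenable, \prettyref{rem:CPD_norm} applies and yields $\left\Vert \hat{\l}(\hat{\nu})\right\Vert =\left\Vert \hat{\mu}\right\Vert =\left\Vert \hat{\nu}\right\Vert$, the last equality because $\hat{\pi}_{\mathrm{u}}$ is a surjective $*$-homomorphism, along which the norm of a positive functional is preserved. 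Setting $K_{n}:=\{\hat{\nu}\in\Cz{\hat{\G}}_{+}^{*}:\left\Vert \hat{\nu}\right\Vert \leq n\}$, this gives $Q\cap\{x:\left\Vert x\right\Vert \leq n\}=\hat{\l}(K_{n})$: the inclusion $\supseteq$ is contractivity of $\hat{\l}$, and $\subseteq$ holds because any representation $x=\hat{\l}(\hat{\nu})$ with $\hat{\nu}\in\Cz{\hat{\G}}_{+}^{*}$ forces $\left\Vert \hat{\nu}\right\Vert =\left\Vert x\right\Vert \leq n$.

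It then remains to observe that $K_{n}$ is $w^{*}$-compact by Banach--Alaoglu, since positivity and the norm bound are $w^{*}$-closed conditions. By the continuity established above, $\hat{\l}(K_{n})$ is ultraweakly compact, hence ultraweakly closed, and Krein--\v{S}mulian concludes that $Q$ is ultraweakly closed.

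I expect the main obstacle to be the middle step: one must recognise $\hat{\l}(\hat{\nu})$ as a genuine positive-definite function in order to invoke \prettyref{rem:CPD_norm}, and it is precisely the resulting norm identity $\left\Vert \hat{\l}(\hat{\nu})\right\Vert =\left\Vert \hat{\nu}\right\Vert$---which hinges on co-amenability of $\G$ through the existence of the co-unit $\epsilon$---that lets me control the norms of preimages and feed the Krein--\v{S}mulian machine. Without this exact control, a norm-bounded element of $Q$ might a priori be representable only by measures of unbounded norm, and the ball-by-ball closedness argument would collapse.
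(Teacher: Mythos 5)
Your proposal is correct and follows essentially the same route as the paper: both reduce to Krein--\v{S}mulian, both use the norm identity of \prettyref{rem:CPD_norm} (hence co-amenability) to see that a norm-bounded element of $Q$ only admits representing functionals of the same norm, and both then conclude by $w^{*}$-compactness of the positive part of a ball in $\Cz{\hat{\G}}^{*}$ --- the paper phrases this as extracting a $w^{*}$-convergent subnet, while you phrase it as $\hat{\l}$ mapping a compact set continuously onto the bounded part of $Q$. The extra details you supply (the explicit $w^{*}$-to-ultraweak continuity of $\hat{\l}$ and the identification $\|\hat{\l}(\hat{\nu})\|=\|\hat{\nu}\|$ via $\hat{\mu}=\hat{\nu}\circ\hat{\pi}_{\mathrm{u}}$) are correct and are implicit in the paper's argument.
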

\begin{proof}
By the Krein--\v{S}mulian theorem, it suffices to prove that $Q_{1}$,
the intersection of $Q$ with the closed unit ball of $\Linfty{\G}$,
is ultraweakly closed. Let $\left(x_{\a}\right)$ be a net in $Q_{1}$
converging ultraweakly to some $x\in\Linfty{\G}$. Write $x_{\a}=\hat{\l}(\hat{\mu}_{\a})$,
$\hat{\mu}_{\a}\in\Cz{\hat{\G}}_{+}^{*}$, for every $\a$. By \prettyref{rem:CPD_norm},
$\left(\hat{\mu}_{\a}\right)$ is bounded by one, and so it has a
subnet converging in the $w^{*}$-topology to some $\hat{\mu}\in\Cz{\hat{\G}}_{+}^{*}$.
Hence $x=\hat{\l}^{\mathrm{u}}(\hat{\mu})\in Q_{1}$.
\end{proof}

\begin{proof}[Proof of \prettyref{thm:co_amen__char}]
\prettyref{enu:co_amen__char__1}$\implies$\prettyref{enu:co_amen__char__2}:
every positive-definite function has the form $\hat{\l}(\hat{\nu})$
for some $\hat{\nu}\in\CzU{\hat{\G}}_{+}^{*}=\Cz{\hat{\G}}_{+}^{*}$
by co-amenability of $\hat{\G}$ (\prettyref{thm:pos_def}). Now $\hat{\nu}$
is the $w^{*}$-limit of a bounded net $\left(\hat{\om}_{\be}\right)$
in $\Lone{\hat{\G}}_{+}$. Since each element of $\Lone{\hat{\G}}_{+}$
can be approximated in norm by elements of $\hat{\mathcal{I}}_{+}$
of the same norm \citep[Lemma 4.7]{Van_Daele__LCQGs}, we may assume
that $\hat{\om}_{\be}\in\hat{\mathcal{I}}$, and hence $\hat{\l}(\hat{\om}_{\be})\in\mathcal{N}_{\varphi}$,
for every $\be$. From \prettyref{thm:top_on_C_0_u__2} applied to
$\hat{\G}$, we infer that $\hat{\l}(\hat{\om}_{\be})\to\hat{\l}(\hat{\nu})$
strictly in $M(\Cz{\G})$. 

\prettyref{enu:co_amen__char__2}$\implies$\prettyref{enu:co_amen__char__3}:
clear.

\prettyref{enu:co_amen__char__3}$\implies$\prettyref{enu:co_amen__char__4}:
let $\mu$ be a bounded positive-definite measure on $\G$. For every
$\hat{\om}\in\Lone{\hat{\G}}_{+}$, 
\[
\overline{\mu}(\hat{\lambda}(\hat{\om}))=(\overline{\mu}\tensor\hat{\om})(W^{*})=\hat{\om}(\l(\mu)^{*})\geq0.
\]
If $x$ is a positive-definite function on $\G$ and $\left(\hat{\om}_{\be}\right)$
is a net in $\Lone{\hat{\G}}_{+}$ such that $\hat{\l}(\hat{\om}_{\be})\to x$
strictly in $M(\Cz{\G})$, then $\overline{\mu}(\hat{\l}(\hat{\om}_{\be}))\to\overline{\mu}(x)=\overline{\mu(x^{*})}$.
Hence $\overline{\mu(x^{*})}$, or equivalently $\mu(x^{*})$, is
non-negative.

\prettyref{enu:co_amen__char__4}$\implies$\prettyref{enu:co_amen__char__5}:
trivial, as $\one:=\one_{M(\Cz{\G})}=\hat{\l}^{\mathrm{u}}(\hat{\epsilon})$
is positive definite.

\prettyref{enu:co_amen__char__5}$\implies$\prettyref{enu:co_amen__char__1}:
as $\hat{\l}^{\mathrm{u}}$ is injective, we should establish that
$\one$ belongs to $Q$. By \prettyref{lem:cone_Q_is_closed}, $Q$
is an ultraweakly closed cone, so it is enough to show that $\one$
belongs to the bipolar of $Q$. Here we are using the version of the
bipolar theorem in which the pre-polar of $Q$ is given by $Q_{\circ}:=\left\{ \om\in\Lone{\G}:(\forall x\in Q)\quad0\leq\Ree\om(x)\right\} $,
and its polar is defined similarly. Note that $Q$ is invariant under
the scaling group, as $\tau_{t}(\hat{\l}(\hat{\mu}))=\hat{\l}(\hat{\mu}\circ\hat{\tau}_{-t})$
for every $\hat{\mu}\in\Cz{\hat{\G}}^{*}$, $t\in\R$ \citep[Propositions 8.23 and 8.25]{Kustermans_Vaes__LCQG_C_star}.
Consequently, 
\[
V:=Q_{\circ}\cap D((\tau_{*})_{-i/2})
\]
is norm dense in $Q_{\circ}$ by a standard smearing argument (e.g.,
see \citep[proof of Proposition 5.26]{Kustermans_Vaes__LCQG_C_star}).
So picking $\om_{0}\in V$, we should show that $0\leq\Ree\om_{0}(\one)$.
For every $\hat{\nu}\in\Cz{\hat{\G}}_{+}^{*}$ we have 
\[
0\leq\Ree\om_{0}(\hat{\l}(\hat{\nu}))=\Ree\overline{\om_{0}(\hat{\l}(\hat{\nu}))}=\Ree(\overline{\om_{0}}\tensor\hat{\nu})(W)=\Ree\hat{\nu}(\l(\overline{\om_{0}})).
\]
Thus $0\leq\l(\overline{\om_{0}})+\l(\overline{\om_{0}})^{*}=\l(\overline{\om_{0}}+\overline{\om_{0}}^{*})$,
that is: $\overline{\om_{0}}+\overline{\om_{0}}^{*}$, as an element
of $\Lone{\G}\hookrightarrow\Cz{\G}^{*}$, is a bounded positive-definite
measure. By assumption, $0\leq(\overline{\om_{0}}+\overline{\om_{0}}^{*})(\one)=(\overline{\om_{0}}+\om_{0})(\one)=2\Ree\om_{0}(\one)$
as $\one\in D(S)$ and $S(\one)=\one$. In conclusion, $\one$ belongs
to the bipolar of $Q$.
\end{proof}

\section{\label{sec:separation_property}The separation property}

\subsection{Preliminaries}
\begin{defn}[Lau and Losert \citep{Lau_Losert__w_star_closed_comp_inv_subsp},
Kaniuth and Lau \citep{Kaniuth_Lau__separation_prop}]
Let $G$ be a locally compact group and $H$ be a closed subgroup
of $G$. We say that $G$ has the \emph{$H$-separation property}
if for every $g\in G\backslash H$ there exists a positive-definite
function $\varphi$ on $G$ with $\varphi|_{H}\equiv1$ but $\varphi(g)\neq1$.
\end{defn}
It was first observed in \citep{Lau_Losert__w_star_closed_comp_inv_subsp}
that $G$ has the $H$-separation property if $H$ is either normal,
compact or open. Generalizing a result of Forrest \citep{Forrest__amen_ideals_A_G},
it was proved that $G$ has the $H$-separation property provided
that $G$ has small $H$-invariant neighborhoods \citep[Proposition 2.2]{Kaniuth_Lau__separation_prop}.
The property was subsequently explored further in several papers,
including \citep{Kaniuth_Lau__separation_prop2,Kaniuth_Lau__ext_sep_prop}.
It is somewhat related to another property connecting positive-definite
functions and closed subgroups, namely the extension property.

In this section we introduce the separation property for LCQGs and
obtain a first result about it. To this end, we continue with some
background on closed quantum subgroups of LCQGs. To simplify the notation
a little, throughout this section we will use $\pi$ for the surjection
$\pi_{\mathrm{u}}:\CzU{\G}\to\Cz{\G}$, $\G$ being a LCQG.
\begin{defn}[Meyer, Roy and Woronowicz \citep{Meyer_Roy_Woronowicz__hom_quant_grps}]
Let $\G,\HH$ be LCQGs. A \emph{strong quantum homomorphism from
$\HH$ to $\G$} is a nondegenerate $*$-homomorphism $\Phi:\CzU{\G}\to M(C_{0}^{\mathrm{u}}(\HH))$
such that $(\Phi\tensor\Phi)\circ\Delta_{\G}^{\mathrm{u}}=\Delta_{\HH}^{\mathrm{u}}\circ\Phi$.
\end{defn}
Every such $\Phi$ has a dual object \citep[Proposition 3.9 and Theorem 4.8]{Meyer_Roy_Woronowicz__hom_quant_grps},
which is the (unique) strong quantum homomorphism $\hat{\Phi}$ from
$\hat{\G}$ to $\hat{\HH}$ that satisfies 
\begin{equation}
(\Phi\tensor\i)(\WW_{\G})=(\i\tensor\hat{\Phi})(\WW_{\HH})\label{eq:quantum_hom_and_dual}
\end{equation}
(here and in the sequel we use the left version of this theory, in
contrast to \citep{Daws_Kasprzak_Skalski_Soltan__closed_q_subgroups_LCQGs,Meyer_Roy_Woronowicz__hom_quant_grps},
which use the right one). As customary, we will write $\Phi$ also
for its unique extension to a $*$-homomorphism $M(\CzU{\G})\to M(C_{0}^{\mathrm{u}}(\HH))$.
\begin{defn}[{Daws, Kasprzak, Skalski and So{\ldash}tan \citep[Definitions 3.1, 3.2 and Theorems 3.3, 3.6]{Daws_Kasprzak_Skalski_Soltan__closed_q_subgroups_LCQGs}}]
Let $\G,\HH$ be LCQGs. 
\begin{enumerate}
\item We say that $\HH$ is a \emph{closed quantum subgroup} \emph{of $\G$
in the sense of Vaes} if there exists a faithful normal $*$-homomorphism
$\gamma:\Linfty{\hat{\HH}}\to\Linfty{\hat{\G}}$ such that $(\gamma\tensor\gamma)\circ\Delta_{\hat{\HH}}=\Delta_{\hat{\G}}\circ\gamma$.
\item We say that $\HH$ is a \emph{closed quantum subgroup} \emph{of $\G$
in the sense of Woronowicz} if there exists a strong quantum homomorphism
$\Phi$ from $\HH$ to $\G$ such that $\Phi(\CzU{\G})=\CzU{\HH}$.
\end{enumerate}
\end{defn}
A fundamental result \citep[Theorem 3.5]{Daws_Kasprzak_Skalski_Soltan__closed_q_subgroups_LCQGs}
is that if $\HH$ is a closed quantum subgroup of $\G$ in the sense
of Vaes, then it is also a closed quantum subgroup of $\G$ in the
sense of Woronowicz. In this case, the maps $\gamma$ and $\Phi$
are related by the identity $\gamma|_{\Cz{\hat{\HH}}}\circ\pi_{\hat{\HH}}=\pi_{\hat{\G}}\circ\hat{\Phi}$.
The converse is true if $\G$ is either commutative, co-commutative
or discrete, or if $\HH$ is compact \citep[Sections 4--6]{Daws_Kasprzak_Skalski_Soltan__closed_q_subgroups_LCQGs}.

\subsection{The separation property for LCQGs}
\begin{defn}
\label{def:sep_prop}Let $\G$ be a LCQG and $\HH$ be a closed quantum
subgroup of $\G$ in the sense of Woronowicz via a strong quantum
homomorphism $\Phi:\CzU{\G}\to\CzU{\HH}$. We say that $\G$ has the
\emph{$\HH$-separation property} if whenever $\mu\in\CzU{\G}_{+}^{*}$
is a state such that $(\mu\tensor\i)(\WW_{\G})\notin\hat{\Phi}(M(\CzU{\hat{\HH}}))$,
there is $\hat{\om}\in\CzU{\hat{\G}}_{+}^{*}$ so that $\Phi((\i\tensor\hat{\om})(\WW_{\G}))=\one_{M(\CzU{\HH})}$
but $\mu((\i\tensor\hat{\om})(\WW_{\G}))\neq1$.
\end{defn}
If $\G$ (thus $\HH$) is commutative, this definition reduces to
the classical one. Generally, for $\hat{\om}\in\CzU{\hat{\G}}_{+}^{*}$,
note that $\Phi((\i\tensor\hat{\om})(\WW_{\G}))=(\i\tensor(\hat{\om}\circ\hat{\Phi}))(\WW_{\HH})$
by \prettyref{eq:quantum_hom_and_dual} and $(\i\tensor\hat{\epsilon}_{\HH})(\WW_{\HH})=\one_{M(\CzU{\HH})}$,
hence the equality $\Phi((\i\tensor\hat{\om})(\WW_{\G}))=\one_{M(\CzU{\HH})}$
is equivalent to $\hat{\om}\circ\hat{\Phi}=\hat{\epsilon}_{\HH}$. 
\begin{thm}
\label{thm:compact_sep_prop}Let $\G$ be a LCQG and $\HH$ a compact
quantum subgroup of $\G$. Let $\hat{p}$ be the central minimal projection
in $\ell^{\infty}(\hat{\HH})$ with $\hat{a}\hat{p}=\hat{\epsilon}_{\HH}(\hat{a})\hat{p}=\hat{p}\hat{a}$
for every $\hat{a}\in\ell^{\infty}(\hat{\HH})$, and assume that the
following condition holds:
\begin{equation}
\text{for every }\hat{z}\in M(\Cz{\hat{\G}})\text{, if }\hat{\Delta}_{\G}(\hat{z})(\gamma(\hat{p})\tensor\one)=\gamma(\hat{p})\tensor\hat{z}\text{ then }\hat{z}\in\Img\gamma.\label{eq:compact_sep_prop_condition}
\end{equation}
Then $\G$ has the $\HH$-separation property.
\end{thm}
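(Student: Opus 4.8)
The plan is to argue by contraposition after translating everything through the remark following \prettyref{def:sep_prop}. Since $\HH$ is compact, $\hat{\HH}$ is discrete and hence co-amenable, so $\CzU{\hat{\HH}}=\Cz{\hat{\HH}}=c_{0}(\hat{\HH})$, the projection $\hat{p}$ already lies in $c_{0}(\hat{\HH})$, and $\gamma|_{\Cz{\hat{\HH}}}=\pi_{\hat{\G}}\circ\hat{\Phi}$; in particular $q:=\hat{\Phi}(\hat{p})$ is a projection in $M(\CzU{\hat{\G}})$ with $\pi_{\hat{\G}}(q)=\gamma(\hat{p})$. First I would observe that, for $\hat{\om}\in\CzU{\hat{\G}}_{+}^{*}$, the condition $\hat{\om}\circ\hat{\Phi}=\hat{\epsilon}_{\HH}$ is equivalent to $\hat{\om}$ being a state with $\hat{\om}(q)=1$: applying $\hat{\Phi}$ to $\hat{a}\hat{p}=\hat{\epsilon}_{\HH}(\hat{a})\hat{p}$ gives $\hat{\Phi}(\hat{a})q=\hat{\epsilon}_{\HH}(\hat{a})q$, and a state supported under $q$ then satisfies $\hat{\om}(\hat{\Phi}(\hat{a}))=\hat{\om}(\hat{\Phi}(\hat{a})q)=\hat{\epsilon}_{\HH}(\hat{a})$, while the converse is immediate from $\hat{\epsilon}_{\HH}(\hat{p})=1$ and unitality of $\hat{\Phi}$. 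Thus the functions that are ``$1$ on $\HH$'' are precisely the $(\i\tensor\hat{\om})(\WW_{\G})$ with $\hat{\om}$ a state supported under $q$.

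Now suppose $\G$ fails the $\HH$-separation property for a state $\mu$, and put $\hat{z}:=(\mu\tensor\i)(\WW_{\G})$, so $\hat{z}\notin\hat{\Phi}(M(\CzU{\hat{\HH}}))$ while $\hat{\om}(\hat{z})=\mu((\i\tensor\hat{\om})(\WW_{\G}))=1$ for every state $\hat{\om}$ with $\hat{\om}(q)=1$. Running over all states of the corner $qM(\CzU{\hat{\G}})q$ yields $q\hat{z}q=q$, and since $\|\hat{z}\|\le\|\mu\|=1$ the elementary fact that $quq=q$ for a projection $q$ and a contraction $u$ forces $q\hat{z}=\hat{z}q=q$ (because $qu(\one-q)u^{*}q=quu^{*}q-q\le0$). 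Writing $\vartheta:=h_{\HH}\circ\Phi$ for the idempotent state of $\HH$, a short computation with $(\Phi\tensor\i)(\WW_{\G})=(\i\tensor\hat{\Phi})(\WW_{\HH})$ and the vanishing of the Haar state on the nontrivial co-representations of $\HH$ gives $(\vartheta\tensor\i)(\WW_{\G})=\hat{\Phi}(\hat{p})=q$; as $\nu\mapsto(\nu\tensor\i)(\WW_{\G})$ is an injective homomorphism $(\CzU{\G}^{*},*)\to M(\CzU{\hat{\G}})$, the relations $q\hat{z}=\hat{z}q=q$ are equivalent to the absorption identities $\vartheta*\mu=\mu*\vartheta=\vartheta$.

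The heart of the proof is to convert this absorption into the hypothesis of \eqref{eq:compact_sep_prop_condition} for $\hat{z}':=\pi_{\hat{\G}}(\hat{z})=\l^{\mathrm{u}}(\mu)\in M(\Cz{\hat{\G}})$, where $P:=\gamma(\hat{p})=\l^{\mathrm{u}}(\vartheta)=\pi_{\hat{\G}}(q)$. First, $P$ is group-like: applying $\gamma\tensor\gamma$ to the discrete-dual identity $\hat{\Delta}_{\HH}(\hat{p})(\hat{p}\tensor\one)=\hat{p}\tensor\hat{p}$ and using $(\gamma\tensor\gamma)\hat{\Delta}_{\HH}=\hat{\Delta}_{\G}\gamma$ gives $\hat{\Delta}_{\G}(P)(P\tensor\one)=P\tensor P$. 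Next, from $q=(\vartheta\tensor\i)(\WW_{\G})$ and the idempotency of $\vartheta$ one obtains $\WW_{\G}(\one\tensor q)=(\Xi\tensor\i)(\WW_{\G})$ with $\Xi(a)=(\i\tensor\vartheta)\Delta_{\mathrm{u}}(a)$; feeding this into $\hat{\Delta}_{\mathrm{u}}(\hat{z})=(\mu\tensor\i\tensor\i)((\WW_{\G})_{13}(\WW_{\G})_{12})$ (via $(\i\tensor\hat{\Delta}_{\mathrm{u}})(\WW_{\G})=(\WW_{\G})_{13}(\WW_{\G})_{12}$) and using $\mu*\vartheta=\vartheta$ to collapse the $\Xi$-slice, I would obtain $\hat{\Delta}_{\mathrm{u}}(\hat{z})(q\tensor\one)=q\tensor\hat{z}$ and push it down by $\pi_{\hat{\G}}\tensor\pi_{\hat{\G}}$ to $\hat{\Delta}_{\G}(\hat{z}')(P\tensor\one)=P\tensor\hat{z}'$. (In the commutative case this is merely $\l_{s}P=P$ for $s$ in the support of $\mu$, which lies in $\HH$.) I expect this leg-numbering computation---threading the idempotent $\vartheta$ through the two copies of $\WW_{\G}$ so that the first collapses to $q$---to be the main obstacle; everything else is bookkeeping.

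Finally, \eqref{eq:compact_sep_prop_condition} gives $\hat{z}'\in\Img\gamma$, say $\hat{z}'=\gamma(\hat{w})$ with $\hat{w}\in\ell^{\infty}(\hat{\HH})=M(\CzU{\hat{\HH}})$, whence $\pi_{\hat{\G}}(\hat{z})=\gamma(\hat{w})=\pi_{\hat{\G}}(\hat{\Phi}(\hat{w}))$. A lifting argument---most transparently by re-deriving the identity of the previous paragraph already at the level of $\WW_{\G}$, or by exhibiting $\mu$ as $\nu\circ\Phi$ for a state $\nu$ on $\HH$ so that $\hat{z}=\hat{\Phi}((\nu\tensor\i)(\WW_{\HH}))$---then upgrades this to $\hat{z}\in\hat{\Phi}(M(\CzU{\hat{\HH}}))$, contradicting the failure hypothesis, and this contradiction establishes the $\HH$-separation property. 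I would flag the passage from the reduced statement $\hat{z}'\in\Img\gamma$ back to the universal membership of $\hat{z}$ as the one remaining point requiring care.
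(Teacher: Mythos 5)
Your overall architecture coincides with the paper's: argue by contradiction, establish $\hat{\Phi}(\hat{p})\hat{z}=\hat{z}\hat{\Phi}(\hat{p})=\hat{\Phi}(\hat{p})$, upgrade this to $\hat{\Delta}_{\G}^{\mathrm{u}}(\hat{z})(\hat{\Phi}(\hat{p})\tensor\one)=\hat{\Phi}(\hat{p})\tensor\hat{z}$, push down by $\pi_{\hat{\G}}\tensor\pi_{\hat{\G}}$, apply \eqref{eq:compact_sep_prop_condition}, and lift back to the universal level. The opening steps (corner states in place of the paper's vector states, the reformulation through the Haar idempotent $\vartheta=h_{\HH}\circ\Phi$) are sound. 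But the step you yourself call the heart of the proof has a genuine gap, and it cannot be closed by leg numbering. Writing $\WW_{\G}=\sum_{i}a_{i}\tensor b_{i}$ heuristically, the expression $(\mu\tensor\i\tensor\i)\bigl((\WW_{\G})_{13}\,[(\Xi\tensor\i)(\WW_{\G})]_{12}\bigr)$ equals $\sum_{i,j}\bigl((\mu a_{i})*\vartheta\bigr)(a_{j})\,b_{j}\tensor b_{i}$: both factors occupy the leg being sliced by $\mu$, so the module property of slice maps does not apply and the convolution does not factor. To collapse this to $q\tensor\hat{z}$ you need $(\mu a)*\vartheta=\mu(a)\vartheta$ for \emph{every} $a\in\CzU{\G}$, whereas $\mu*\vartheta=\vartheta$ is only the case $a=\one$. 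The stronger identity is not a formal consequence of the weaker one; it is rescued only by the positivity of $\mu$, which your computation never invokes. (Classically this is the assertion that a probability measure whose convolution with the normalized Haar measure of $H$ returns that Haar measure must be supported on $H$ --- a statement about disjointness of cosets, not an algebraic identity.) The paper supplies exactly this positivity input via the Kadison--Schwarz variant of \eqref{eq:pos_def_ineq}: since $\mu$ is a state, $\bigl[\hat{\Delta}_{\G}^{\mathrm{u}}(\hat{y})-\one\tensor\hat{y}\bigr]^{*}\bigl[\hat{\Delta}_{\G}^{\mathrm{u}}(\hat{y})-\one\tensor\hat{y}\bigr]\leq\bigl[2\one-(\hat{y}^{*}+\hat{y})\bigr]\tensor\one$, and multiplying by $\hat{\Phi}(\hat{p})\tensor\one$ on both sides annihilates the right-hand side by the corner relation you have already established, forcing the left-hand side to vanish. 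You should replace your slice computation by this argument.

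The final lifting step, which you correctly flag, is also a real issue: \eqref{eq:compact_sep_prop_condition} only yields $\pi_{\hat{\G}}(\hat{z})\in(\pi_{\hat{\G}}\circ\hat{\Phi})(M(c_{0}(\hat{\HH})))$, and neither of your two suggestions resolves it --- $\pi_{\hat{\G}}$ need not be injective, and exhibiting $\mu$ as $\nu\circ\Phi$ is essentially the conclusion you are after. The paper devotes \prettyref{lem:compact_quant_subgrp_pi_G_hat} to precisely this point: one cuts by the finite-dimensional blocks $\hat{p}_{\a}$ of $c_{0}(\hat{\HH})$ coming from Peter--Weyl, uses the injectivity of $\l_{\G}^{\mathrm{u}}$ to lift each compression from the reduced to the universal level, and then sums strictly over $\a$. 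Without both of these inputs the proposal does not constitute a proof.
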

It will be clear from the proof of \prettyref{thm:compact_sep_prop}
that a condition weaker than \prettyref{eq:compact_sep_prop_condition}
is enough. However, \prettyref{eq:compact_sep_prop_condition} is
often easier to check.

Before proving the theorem, observe that each $\hat{z}\in\Img\gamma$
indeed satisfies $\hat{\Delta}_{\G}(\hat{z})(\gamma(\hat{p})\tensor\one)=\gamma(\hat{p})\tensor\hat{z}$
(see Van Daele \citep[Proposition 3.1]{Van_Daele__discrete_CQs}).
Also, if $\hat{z}\in\Linfty{\hat{\G}}$ satisfies this identity, then
taking $\hat{\om}\in\Lone{\hat{\G}}$ with $\hat{\om}(\gamma(\hat{p}))=1$,
we get $(\gamma(\hat{p})\hat{\om}\tensor\i)\hat{\Delta}_{\G}(\hat{z})=\hat{z}$,
so in the terminology of \citep{Runde__unif_cont_LCQG}, we have $\hat{z}\in\mathrm{LUC}(\hat{\G})$,
thus $\hat{z}\in M(\Cz{\hat{\G}})$ \citep[Theorem 2.4]{Runde__unif_cont_LCQG}.
Furthermore, if $\G$ is commutative or co-commutative, then \prettyref{eq:compact_sep_prop_condition}
holds automatically by \citep[Sections 4, 5]{Daws_Kasprzak_Skalski_Soltan__closed_q_subgroups_LCQGs};
we prove the former case below, and the second one, in which $\G=\hat{G}$
for some locally compact group $G$ and $\HH=\widehat{G/A}$ for an
open normal subgroup $A$ of $G$, is a simple observation. At the
moment it is unclear whether \prettyref{eq:compact_sep_prop_condition}
always holds, but we will show in \prettyref{sub:bicrossed_products}
that it holds in an abundance of examples in which closed quantum
subgroups appear naturally, namely via the bicrossed product construction,
and in \prettyref{sub:E_2} that it holds for $\mathbb{T}$ as a closed
quantum subgroup of quantum $E(2)$.
\begin{prop}
Condition \prettyref{eq:compact_sep_prop_condition} holds when $\G$
is commutative.\end{prop}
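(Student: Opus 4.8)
The plan is to translate everything into the classical language of the Fourier and Fourier--Stieltjes algebras and then reduce the displayed condition to a statement about the Fourier support of an element of $\VN{G}$. Write $\G=G$ for a locally compact group, so $\hat{\G}$ is co-commutative with $\Linfty{\hat{\G}}=\VN{G}$ acting on $\Ltwo{G}$, with $\hat{\Delta}(\l_{s})=\l_{s}\tensor\l_{s}$, and with predual $\Lone{\hat{\G}}=A(G)$ paired to $\VN{G}$ by $\langle\l_{s},v\rangle=v(s)$. Since $\HH$ is a compact quantum subgroup of the commutative $\G$, it is a genuine compact subgroup $K\leq G$ by \citep[Section 4]{Daws_Kasprzak_Skalski_Soltan__closed_q_subgroups_LCQGs}; the map $\gamma$ is the normal $*$-homomorphism $\VN{K}\to\VN{G}$ determined by $\l_{k}^{K}\mapsto\l_{k}^{G}$, and $\hat{p}=\int_{K}\l_{k}^{K}\d k$, so $q:=\gamma(\hat{p})=\int_{K}\l_{k}\d k$ is the orthogonal projection of $\Ltwo{G}$ onto the left-$K$-invariant vectors. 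In particular $q\z=\z$ holds exactly when $\l_{k}\z=\z$ for all $k\in K$, and $\l_{k}q=q$ for $k\in K$.

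First I would record that $\Img\gamma$ is the annihilator of $\ker(r)$, where $r\colon A(G)\to A(K)$, $v\mapsto v|_{K}$, is the (surjective) restriction map; by the Takesaki--Tatsuuma description of a group von Neumann subalgebra this equals $\{T\in\VN{G}:\operatorname{supp}T\subseteq K\}$. Thus it suffices to show that any $\hat{z}\in\VN{G}$ satisfying the hypothesis has $\operatorname{supp}\hat{z}\subseteq K$; note this uses only $\hat{z}\in\VN{G}$, not membership in $M(\Cz{\hat{\G}})$.

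Next I would extract the key relation by slicing the first leg. Fixing $\z\in q\Ltwo{G}$ and $\eta\in\Ltwo{G}$ and applying $\om_{\z,\eta}\tensor\i$ to $\hat{\Delta}(\hat{z})(q\tensor\one)=q\tensor\hat{z}$, the identity $q\z=\z$ absorbs the factor $q$ on the left and gives
\[ \hat{z}\cdot\om_{\z,\eta}=\langle\z,\eta\rangle\,\hat{z}. \]
Since $\l_{s}\cdot\om_{\z,\eta}=\langle\l_{s}\z,\eta\rangle\l_{s}$, writing $u(s):=\langle\l_{s}\z,\eta\rangle$ we have $(\hat{z}\cdot\om_{\z,\eta})(v)=\hat{z}(uv)$ for $v\in A(G)$, where $u$ is right-$K$-invariant (because $\l_{k}\z=\z$) with $u(e)=\langle\z,\eta\rangle$. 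Averaging a coefficient function over $K$ shows conversely that every right-$K$-invariant element of $A(G)$ has this form, so the relation reads
\[ \hat{z}(uv)=u(e)\,\hat{z}(v)\qquad(v\in A(G),\ u\in A(G)\text{ right-}K\text{-invariant}). \]

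Finally I would localise. Given $s_{0}\notin K$, the compact sets $s_{0}K$ and $K$ are disjoint, so by regularity of $A(G)$ \citep{Eymard__Fourier_alg} there is $w\in A(G)$ with $w\equiv1$ on $K$ and $\operatorname{supp}w$ disjoint from an open neighbourhood $O\supseteq s_{0}K$. Its right average $u(s):=\int_{K}w(sk)\d k$ lies in $A(G)$, is right-$K$-invariant, satisfies $u(e)=\int_{K}w(k)\d k=1$, and vanishes on some neighbourhood $U$ of $s_{0}$ (choose $U$ with $UK\subseteq O$). For any $v\in A(G)$ supported in $U$ we get $uv=0$, hence $\hat{z}(v)=u(e)\hat{z}(v)=\hat{z}(uv)=0$; so $s_{0}\notin\operatorname{supp}\hat{z}$, and as $s_{0}\notin K$ was arbitrary, $\operatorname{supp}\hat{z}\subseteq K$, i.e.\ $\hat{z}\in\Img\gamma$. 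The genuine subtleties, rather than the slicing itself, are two: matching the module action $\hat{z}\cdot\om_{\z,\eta}$ with multiplication by the coefficient function $u$ and identifying the right-$K$-invariant elements of $A(G)$, and invoking the support characterisation $\Img\gamma=\{T:\operatorname{supp}T\subseteq K\}$ (equivalently, that closed subgroups are sets of synthesis for $A(G)$), which must be cited rather than reproved.
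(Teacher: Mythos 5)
Your proof is correct and follows essentially the same route as the paper's: both reduce the claim to showing $\operatorname{supp}\hat{z}\subseteq K$ and invoking the Takesaki--Tatsuuma theorem, both extract from the hypothesis a multiplicativity relation of the form $\hat{z}(uv)=u(e)\,\hat{z}(v)$ for $K$-averaged elements $u\in A(G)$, and both then localise using the regularity of $A(G)$ \citep{Eymard__Fourier_alg}. The only cosmetic differences are that you slice with vector functionals $\om_{\z,\eta}$ (with $\z$ left-$K$-invariant) and work with right averages, whereas the paper pairs against general $\om_{1}\tensor\om_{2}$ and, after replacing $\hat{z}$ by its adjoint, uses left averages over the subgroup.
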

\begin{proof}
Let $G$ be a locally compact group and $H$ a compact subgroup of
$G$. The embedding $\gamma:\VN H\to M(C_{r}^{*}(G))\subseteq\VN G$
is the natural one, mapping $\l_{h}\in\VN H$, $h\in H$, to $\l_{h}$
in $\VN G$. Also $\gamma(\hat{p})=\int_{H}\l_{h}\d h$. Replacing
$\hat{z}$ by its adjoint in \prettyref{eq:compact_sep_prop_condition},
suppose that $\hat{z}\in\VN G$ and $(\gamma(\hat{p})\tensor\one)\hat{\Delta}(\hat{z})=\gamma(\hat{p})\tensor\hat{z}$.
Denote by $\ell_{t}$, $t\in G$, the left shift operators over $A(G)$.
For all $\om_{1},\om_{2}\in A(G)$ and $t\in G$, one calculates that
\[
(\om_{1}\tensor\om_{2})[(\l_{t}\tensor\one)\hat{\Delta}(\hat{z})]=(\ell_{t^{-1}}(\om_{1})\cdot\om_{2})(\hat{z}),
\]
and thus 
\[
(\om_{1}\tensor\om_{2})[(\gamma(\hat{p})\tensor\one)\hat{\Delta}(\hat{z})]=\int_{H}(\ell_{h^{-1}}(\om_{1})\cdot\om_{2})(\hat{z})\d h=\left((\int_{H}\ell_{h^{-1}}(\om_{1})\d h)\cdot\om_{2}\right)(\hat{z})
\]
(the second integral is in the norm of $A(G)$), and by assumption
it is equal to
\[
(\om_{1}\tensor\om_{2})(\gamma(\hat{p})\tensor\hat{z})=\int_{H}\om_{1}(h)\d h\cdot\om_{2}(\hat{z}).
\]
Fix a closed set $C$ with $C\cap H=\emptyset$ and $\om_{2}\in A(G)$
that is supported by $C$. Noticing that $HC\cap H=\emptyset$, let
$\om_{1}\in A(G)$ be such that $\om_{1}|_{H}\equiv1$ and $\om_{1}|_{HC}\equiv0$
\citep[Lemme 3.2]{Eymard__Fourier_alg}. We have
\[
0=\left((\int_{H}\ell_{h^{-1}}(\om_{1})\d h)\cdot\om_{2}\right)(\hat{z})=\int_{H}\om_{1}(h)\d h\cdot\om_{2}(\hat{z})=\om_{2}(\hat{z}).
\]
Consequently, the support of $\hat{z}$ (see Eymard \citep[D\'{e}finition 4.5 and Proposition 4.8]{Eymard__Fourier_alg})
is contained in $H$. Consequently, by Takesaki and Tatsuuma \citep{Takesaki_Tatsuuma__duality_subgroups_2},
$\hat{z}$ belongs to $\gamma(\VN H)$, as desired.
\end{proof}

\begin{proof}[Proof of \prettyref{thm:compact_sep_prop}]
Let $\mu\in\CzU{\G}_{+}^{*}$ be a state such that $(\mu\tensor\i)(\WW_{\G})\notin\hat{\Phi}(M(c_{0}(\hat{\HH})))$.
We should prove that there exists $\hat{\om}\in\CzU{\hat{\G}}_{+}^{*}$
so that $\hat{\om}\circ\hat{\Phi}=\hat{\epsilon}_{\HH}$ but $\mu((\i\tensor\hat{\om})(\WW_{\G}))\neq1$.
Assume by contradiction that $\mu((\i\tensor\hat{\om})(\WW_{\G}))=1$
for every $\hat{\om}\in\CzU{\hat{\G}}_{+}^{*}$ such that $\hat{\om}\circ\hat{\Phi}=\hat{\epsilon}_{\HH}$.
Representing $M(\CzU{\hat{\G}})$ faithfully on some Hilbert space,
every unit vector $\z\in\Img\hat{\Phi}(\hat{p})$ satisfies $\hat{\om}_{\z}\circ\hat{\Phi}=\hat{\epsilon}_{\HH}$.
Hence $\hat{\om}_{\z}\left[(\mu\tensor\i)(\WW_{\G})\right]=1$ for
every such vector, and as $\left\Vert (\mu\tensor\i)(\WW_{\G})\right\Vert =1$,
we obtain 
\[
(\mu\tensor\i)(\WW_{\G})\hat{\Phi}(\hat{p})=\hat{\Phi}(\hat{p})=\hat{\Phi}(\hat{p})(\mu\tensor\i)(\WW_{\G}).
\]
Denote $\hat{y}:=(\mu\tensor\i)(\WW_{\G})\in M(\CzU{\hat{\G}})$.
Since $\mu$ is a state, a variant of \prettyref{eq:pos_def_ineq}
implies that 
\[
\bigl[\hat{\Delta}_{\G}^{\mathrm{u}}(\hat{y})-\one\tensor\hat{y}\bigr]^{*}\bigl[\hat{\Delta}_{\G}^{\mathrm{u}}(\hat{y})-\one\tensor\hat{y}\bigr]\leq\left[2\one-(\hat{y}^{*}+\hat{y})\right]\tensor\one.
\]
Multiplying by $\hat{\Phi}(\hat{p})\tensor\one$ on both sides we
get $\bigl[\hat{\Delta}_{\G}^{\mathrm{u}}(\hat{y})-\one\tensor\hat{y}\bigr](\hat{\Phi}(\hat{p})\tensor\one)=0$,
that is, $\hat{\Delta}_{\G}^{\mathrm{u}}(\hat{y})(\hat{\Phi}(\hat{p})\tensor\one)=\hat{\Phi}(\hat{p})\tensor\hat{y}$.
Applying $\pi_{\hat{\G}}\tensor\pi_{\hat{\G}}$ to both sides and
using that $\pi_{\hat{\G}}\circ\hat{\Phi}=\gamma$, we get $\hat{\Delta}_{\G}(\pi_{\hat{\G}}(\hat{y}))(\gamma(\hat{p})\tensor\one)=\gamma(\hat{p})\tensor\pi_{\hat{\G}}(\hat{y})$.
By \prettyref{eq:compact_sep_prop_condition}, 
\[
(\mu\tensor\i)(\Ww_{\G})=\pi_{\hat{\G}}(\hat{y})\in(\pi_{\hat{\G}}\circ\hat{\Phi})(M(c_{0}(\hat{\HH}))).
\]
From \prettyref{lem:compact_quant_subgrp_pi_G_hat} below we obtain
$(\mu\tensor\i)(\WW_{\G})\in\hat{\Phi}(M(c_{0}(\hat{\HH})))$, a contradiction.\end{proof}
\begin{lem}
\label{lem:compact_quant_subgrp_pi_G_hat}Let $\G$ be a LCQG and
$\HH$ be a compact quantum subgroup of $\G$. If $\mu\in\CzU{\G}^{*}$
is such that $\hat{x}:=(\mu\tensor\i)(\WW_{\G})$ satisfies $\pi_{\hat{\G}}(\hat{x})\in(\pi_{\hat{\G}}\circ\hat{\Phi})(M(c_{0}(\hat{\HH})))$,
then $\hat{x}\in\hat{\Phi}(M(c_{0}(\hat{\HH})))$.\end{lem}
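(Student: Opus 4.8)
The plan is to identify $\hat x=(\mu\tensor\i)(\WW_{\G})$ with $\hat\Phi(\hat b)$, where $\hat b\in M(c_0(\hat\HH))$ is chosen so that $\pi_{\hat\G}(\hat x)=\gamma(\hat b)$. Such a $\hat b$ is available because $\HH$, being compact, is a closed quantum subgroup of $\G$ in the sense of Vaes, so that the faithful normal $\gamma\colon\Linfty{\hat\HH}\to\Linfty{\hat\G}$ exists and satisfies $\pi_{\hat\G}\circ\hat\Phi=\gamma$ on $c_0(\hat\HH)$ (and on its strict extension); the hypothesis then reads $\pi_{\hat\G}(\hat x)=\gamma(\hat b)$. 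The whole difficulty is that $\pi_{\hat\G}$ is far from injective on $M(\CzU{\hat\G})$ when $\G$ is not amenable, so $\pi_{\hat\G}(\hat x)=\pi_{\hat\G}(\hat\Phi(\hat b))$ does not by itself force $\hat x=\hat\Phi(\hat b)$; the point will be to exploit the special form of $\hat x$ as a slice of $\WW_{\G}$.

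First I would record two injectivity facts. Consider the multiplicative map $\Theta\colon\CzU{\G}^{*}\to M(\CzU{\hat\G})$, $\Theta(\mu)=(\mu\tensor\i)(\WW_{\G})$ (multiplicativity follows from $(\Delta_{\mathrm u}\tensor\i)(\WW_{\G})=(\WW_{\G})_{13}(\WW_{\G})_{23}$), so that $\pi_{\hat\G}\circ\Theta=\l^{\mathrm u}$ and $\hat x=\Theta(\mu)$. Since $\{(\i\tensor\hat\om)(\WW_{\G}):\hat\om\in\Lone{\hat\G}\}$ is norm dense in $\CzU{\G}$ (as already used in the proof of \prettyref{thm:top_on_C_0_u__2}) and these slices equal $(\i\tensor\hat\om)(\Ww_{\G})$, any $\mu$ with $\l^{\mathrm u}(\mu)=0$ vanishes on a norm dense set, so $\l^{\mathrm u}$ is injective; hence $\pi_{\hat\G}$ is injective on $\mathrm{range}\,\Theta$. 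On the other hand, $\gamma$ being faithful and $\pi_{\hat\G}\circ\hat\Phi=\gamma$ show that $\pi_{\hat\G}$ is also injective on $\hat\Phi(M(c_0(\hat\HH)))$.

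Next I would bring in the dual homomorphism. Slicing the first leg of \prettyref{eq:quantum_hom_and_dual} by $\nu\in\CzU{\HH}^{*}$ (legitimate as $\CzU{\HH}$ is unital) yields the commuting relation $\Theta(\nu\circ\Phi)=\hat\Phi((\nu\tensor\i)(\WW_{\HH}))$. Because $\HH$ is compact, $\hat\HH$ is discrete, $\CzU{\hat\HH}=c_0(\hat\HH)$ is a $c_0$-direct sum of finite matrix blocks $M_{n(\a)}$ ($\a\in\mathrm{Irred}(\HH)$), and by the Peter--Weyl theory of $\HH$ \citep{Woronowicz__symetries_quantiques} every element of the algebraic direct sum of the blocks is of the form $(\nu\tensor\i)(\WW_{\HH})$ for a suitable $\nu\in\CzU{\HH}^{*}$. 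Consequently $\hat\Phi(\hat c)\in\mathrm{range}\,\Theta$ for every finitely supported $\hat c$. This is the device that circumvents the main obstacle: I will never need a single bounded functional representing all of $\hat b$ at once (this would fail already classically, as the range of $\nu\mapsto(\nu\tensor\i)(\WW_{\HH})$ is only a Fourier--Stieltjes-type subspace, not all of $\ell^{\infty}(\hat\HH)$), but only one block at a time.

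Finally I would assemble the pieces locally and pass to a strict limit. Writing $\hat p_{\a}$ for the block projections and $\hat q_{F}=\sum_{\a\in F}\hat p_{\a}$ for finite $F\subseteq\mathrm{Irred}(\HH)$, both $\hat x\,\hat\Phi(\hat q_{F})=\Theta(\mu)\,\hat\Phi(\hat q_{F})$ and $\hat\Phi(\hat b\,\hat q_{F})$ lie in the subalgebra $\mathrm{range}\,\Theta$ (the former as a product inside a homomorphic image, using that $\hat\Phi(\hat q_{F})\in\mathrm{range}\,\Theta$; the latter because $\hat b\,\hat q_{F}$ is finitely supported), and applying $\pi_{\hat\G}$ shows both have image $\gamma(\hat b\,\hat q_{F})$. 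By injectivity of $\pi_{\hat\G}$ on $\mathrm{range}\,\Theta$ they coincide, i.e. $\hat x\,\hat\Phi(\hat q_{F})=\hat\Phi(\hat b\,\hat q_{F})$. Letting $F$ increase, $\hat q_{F}\to\one$ strictly, so nondegeneracy and strict continuity of $\hat\Phi$ on bounded sets give $\hat x\,\hat\Phi(\hat q_{F})\to\hat x$ and $\hat\Phi(\hat b\,\hat q_{F})\to\hat\Phi(\hat b)$ strictly, whence $\hat x=\hat\Phi(\hat b)\in\hat\Phi(M(c_0(\hat\HH)))$. The crux is thus the reconciliation of the universal and reduced levels, and the resolution is the pair of injectivity statements together with the per-block strict approximation, which trades an impossible global bounded lift for bounded lifts of single blocks glued strictly.
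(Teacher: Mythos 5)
Your proof is correct and follows essentially the same route as the paper's: both arguments rest on the block decomposition of $c_{0}(\hat{\HH})$, the Peter--Weyl realization of (finitely supported) block elements as slices $(\nu\tensor\i)(\Ww_{\HH})$, the relation \prettyref{eq:quantum_hom_and_dual}, and the injectivity of $\l^{\mathrm{u}}_{\G}$ to lift the identity from the reduced to the universal level, concluding with the strict convergence $\sum_{\a}\hat{\Phi}(\hat{p}_{\a})=\one$. Your packaging of the lifting step as ``$\pi_{\hat{\G}}$ is injective on the range of $\Theta$'' and your use of finite partial sums $\hat{q}_{F}$ in place of the paper's one-block-at-a-time treatment are only cosmetic differences.
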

\begin{proof}
Recall that up to isomorphism, $c_{0}(\hat{\HH})$ decomposes as $c_{0}-\bigoplus_{\a\in\mathrm{Irred}(\HH)}M_{n(\a)}$.
For each $\a\in\mathrm{Irred}(\HH)$, write $\hat{p}_{\a}\in c_{0}(\hat{\HH})$
for the identity of $M_{n(\a)}$, and let $\om_{\a}\in C^{\mathrm{u}}(\HH)^{*}$
be such that $\hat{p}_{\a}=(\om_{\a}\tensor\i)(\Ww_{\HH})$ (which
exists by the Peter--Weyl theory for compact quantum groups \citep{Woronowicz__symetries_quantiques}).
Then $(\pi_{\hat{\G}}\circ\hat{\Phi})(\hat{p}_{\a})=((\om_{\a}\circ\Phi)\tensor\i)(\Ww_{\G})$
by \prettyref{eq:quantum_hom_and_dual}, and 
\[
(\mu\tensor\i)(\Ww_{\G})\cdot(\pi_{\hat{\G}}\circ\hat{\Phi})(\hat{p}_{\a})\in(\pi_{\hat{\G}}\circ\hat{\Phi})(M(c_{0}(\hat{\HH}))).
\]
If $\hat{y}_{\a}\in M_{n(\a)}$ is such that $(\mu\tensor\i)(\Ww_{\G})\cdot(\pi_{\hat{\G}}\circ\hat{\Phi})(\hat{p}_{\a})=(\pi_{\hat{\G}}\circ\hat{\Phi})(\hat{y}_{\a})$,
there exists $\rho_{\a}\in C^{\mathrm{u}}(\HH)^{*}$ with $\hat{y}_{\a}=(\rho_{\a}\tensor\i)(\Ww_{\HH})$.
Thus 
\[
((\mu*(\om_{\a}\circ\Phi))\tensor\i)(\Ww_{\G})=((\rho_{\a}\circ\Phi)\tensor\i)(\Ww_{\G}).
\]
Hence $\mu*(\om_{\a}\circ\Phi)=\rho_{\a}\circ\Phi$ as $\l_{\G}^{\mathrm{u}}$
is injective, and we can replace $\Ww_{\G}$ by $\WW_{\G}$ to obtain
\[
(\mu\tensor\i)(\WW_{\G})\cdot\hat{\Phi}(\hat{p}_{\a})\in\hat{\Phi}(M(c_{0}(\hat{\HH}))).
\]
But $\sum_{\a\in\mathrm{Irred}(\HH)}\hat{\Phi}(\hat{p}_{\a})=\one$
strictly in $M(\CzU{\hat{\G}})$ since $\sum_{\a\in\mathrm{Irred}(\HH)}\hat{p}_{\a}=\one$
strictly in $M(c_{0}(\hat{\HH}))$ and $\hat{\Phi}$ is nondegenerate,
so we conclude that $(\mu\tensor\i)(\WW_{\G})\in\hat{\Phi}(M(c_{0}(\hat{\HH})))$.\end{proof}
\begin{rem}
For $\mu\in\CzU{\G}_{+}^{*}$, the condition $(\mu\tensor\i)(\WW_{\G})\notin\hat{\Phi}(M(\CzU{\hat{\HH}}))$
from \prettyref{def:sep_prop} implies that $\mu\notin\Phi^{*}(\CzU{\HH}^{*})$,
because if $\mu=\nu\circ\Phi$ for some $\nu\in C_{0}^{\mathrm{u}}(\HH)_{+}^{*}$,
then $(\mu\tensor\i)(\WW_{\G})=\hat{\Phi}((\nu\tensor\i)(\WW_{\HH}))\in\hat{\Phi}(M(\CzU{\hat{\HH}}))$
by \prettyref{eq:quantum_hom_and_dual}. Moreover, if $\G$ is commutative,
the two conditions are equivalent. We do not know whether \prettyref{thm:compact_sep_prop}
holds with this weaker condition as well.
\end{rem}

\subsection{\label{sub:bicrossed_products}Examples arising from the bicrossed
product construction}

A natural way to construct a closed quantum subgroup of a LCQG is
the bicrossed product (see Vaes and Vainerman \citep{Vaes_Vainerman__extensions_LCQGs}).
Let $\G_{1},\G_{2}$ be LCQGs. We say that $(\G_{1},\G_{2})$ is a
\emph{matched pair} \citep[Definition 2.1]{Vaes_Vainerman__extensions_LCQGs}
if it admits a cocycle matching $(\tau,\mathscr{U},\mathscr{V})$,
which means that $\tau:\Linfty{\G_{1}}\tensorn\Linfty{\G_{2}}\to\Linfty{\G_{1}}\tensorn\Linfty{\G_{2}}$
is a faithful, normal, unital $*$-homomorphism and $\mathscr{U}\in\Linfty{\G_{1}}\tensorn\Linfty{\G_{1}}\tensorn\Linfty{\G_{2}}$,
$\mathscr{V}\in\Linfty{\G_{1}}\tensorn\Linfty{\G_{2}}\tensorn\Linfty{\G_{2}}$
are unitaries such that the $*$-homomorphisms 
\[
\a:\Linfty{\G_{2}}\to\Linfty{\G_{1}}\tensorn\Linfty{\G_{2}},\qquad\be:\Linfty{\G_{1}}\to\Linfty{\G_{1}}\tensorn\Linfty{\G_{2}}
\]
given by $\a(y):=\tau(\one\tensor y)$, $y\in\Linfty{\G_{2}}$, and
$\be(x):=\tau(x\tensor\one)$, $x\in\Linfty{\G_{1}}$, satisfy the
following conditions:
\begin{enumerate}
\item $(\a,\mathscr{U})$ is a left cocycle action of $\G_{1}$ on $\Linfty{\G_{2}}$,
that is:
\[
(\i\tensor\a)(\a(y))=\mathscr{U}(\Delta_{1}\tensor\i)(\a(y))\mathscr{U}^{*}\qquad(\forall y\in\Linfty{\G_{2}}),
\]
\[
(\i\tensor\i\tensor\a)(\mathscr{U})(\Delta_{1}\tensor\i\tensor\i)(\mathscr{U})=(\one\tensor\mathscr{U})(\i\tensor\Delta_{1}\tensor\i)(\mathscr{U});
\]

\item $(\sigma\be,\mathscr{V}_{321})$ is a left cocycle action of $\G_{2}$
on $\Linfty{\G_{1}}$, that is:
\[
(\be\tensor\i)(\be(x))=\mathscr{V}(\i\tensor\Delta_{2}^{\op})(\be(x))\mathscr{V}^{*}\qquad(\forall x\in\Linfty{\G_{1}}),
\]
\[
(\be\tensor\i\tensor\i)(\mathscr{V})(\i\tensor\i\tensor\Delta_{2}^{\op})(\mathscr{V})=(\mathscr{V}\tensor\one)(\i\tensor\Delta_{2}^{\op}\tensor\i)(\mathscr{V});
\]

\item $(\a,\mathscr{U})$ and $(\be,\mathscr{V})$ are matched, that is:
\[
\tau_{13}(\a\tensor\i)(\Delta_{2}(y))=\mathscr{V}_{132}(\i\tensor\Delta_{2})(\a(y))\mathscr{V}_{132}^{*}\qquad(\forall y\in\Linfty{\G_{2}}),
\]
\[
\tau_{23}\sigma_{23}(\be\tensor\i)(\Delta_{1}(x))=\mathscr{U}(\Delta_{1}\tensor\i)(\be(x))\mathscr{U}^{*}\qquad(\forall x\in\Linfty{\G_{1}}),
\]
\begin{multline}
(\Delta_{1}\tensor\i\tensor\i)(\mathscr{V})(\i\tensor\i\tensor\Delta_{2}^{\op})(\mathscr{U}^{*})\\
=(\mathscr{U}^{*}\tensor\one)(\i\tensor\tau\sigma\tensor\i)\left[(\be\tensor\i\tensor\i)(\mathscr{U}^{*})(\i\tensor\i\tensor\a)(\mathscr{V})\right](\one\tensor\mathscr{V}).\label{eq:matched_U_V}
\end{multline}

\end{enumerate}
Suppose that such a matched pair is given. For convenience, write
$\H_{i}:=\Ltwo{\G_{i}}$, $i=1,2$, and let $\tilde{W}:=(W_{1}\tensor\one)\mathscr{U}^{*}\in\Linfty{\G_{1}}\tensorn B(\H_{1})\tensorn\Linfty{\G_{2}}$.
Recall that the cocycle crossed product $\G_{1}\presb{\a,\mathscr{U}}{\ltimes}\Linfty{\G_{2}}$
is the von Neumann subalgebra of $B(\H_{1})\tensorn\Linfty{\G_{2}}$
generated by $\a(\Linfty{\G_{2}})$ and $\{(\om\tensor\i\tensor\i)(\tilde{W}):\om\in\Lone{\G_{1}}\}$.
Letting $\H:=\H_{1}\tensor\H_{2}$, define unitaries $W,\hat{W}\in B(\H\tensor\H)$
by 
\[
\hat{W}:=(\be\tensor\i\tensor\i)\bigl[(W_{1}\tensor\one)\mathscr{U}^{*}\bigr](\i\tensor\i\tensor\a)\bigl[\mathscr{V}(\one\tensor\hat{W}_{2})\bigr],\qquad W:=\sigma(\hat{W}^{*}).
\]
By \citep[Theorem 2.13]{Vaes_Vainerman__extensions_LCQGs}, there
is a LCQG $\G$ with $\Linfty{\G}=\G_{1}\presb{\a,\mathscr{U}}{\ltimes}\Linfty{\G_{2}}$,
$\Ltwo{\G}=\H$ and $W$ being its left regular co-representation.
Defining $\tilde{\tau}:=\sigma\tau\sigma$, $\tilde{\mathscr{U}}:=\mathscr{V}_{321}$
and $\tilde{\mathscr{V}}:=\mathscr{U}_{321}$, one checks that $(\tilde{\tau},\tilde{\mathscr{U}},\tilde{\mathscr{V}})$
is a cocycle matching making $(\G_{2},\G_{1})$ into a matched pair.
Its ambient LCQG is, up to flipping from $\H_{2}\tensor\H_{1}$ to
$\H_{1}\tensor\H_{2}$, precisely the dual $\hat{\G}$. In what follows
we use a subscript to indicate that a symbol relates to $\G_{i}$,
$i=1,2$, and a lack of subscript if it relates to $\G$. For instance,
$J_{1}$, $J_{2}$ and $J$ are the modular conjugations of $\Linfty{\G_{1}}$,
$\Linfty{\G_{2}}$ and $\Linfty{\G}$, respectively.

Since $\Delta\circ\a=(\a\tensor\a)\circ\Delta_{2}$ \citep[Proposition 2.4]{Vaes_Vainerman__extensions_LCQGs},
we see that $\hat{\G}_{2}$ is a closed quantum subgroup of $\hat{\G}$
in the sense of Vaes, thus also in the sense of Woronowicz.

It is proved in \citep[Section 3]{Vaes_Vainerman__extensions_LCQGs}
that there is a bijection between (cocycle) bicrossed products and
\emph{cleft extensions} of LCQGs. To elaborate, consider the unitary
\[
Z_{2}:=(J_{1}\tensor\hat{J})(\i\tensor\be)(\hat{W}_{1}^{*})(J_{1}\tensor\hat{J}).
\]
Then the formula $\theta(z):=Z_{2}(\one\tensor z)Z_{2}^{*}$ defines
a map $\theta:\Linfty{\G}\to\Linfty{\hat{\G}_{1}}\tensorn\Linfty{\G}$,
which is an action of $\hat{\G}_{1}^{\op}$ on $\G$. The exactness
of the sequence at $\G$ is manifested by the following characterization
of the fixed-point algebra of $\theta$: 
\begin{equation}
\Linfty{\G}^{\theta}=\a(\Linfty{\G_{2}}).\label{eq:extension_exactness}
\end{equation}
The proof of this is by no means technical: it strongly relies on
the structure of $\G$ and its dual.
\begin{example}
\label{exa:bicrossed_products}Assume henceforth that $\G_{2}$ is
discrete and, denoting by $p$ the central minimal projection in $\Linfty{\G_{2}}$
with $yp=\epsilon_{2}(y)p=py$ for every $y\in\Linfty{\G_{2}}$, that
\begin{gather}
\a(p)=\one_{\Linfty{\G_{1}}}\tensor p,\label{eq:matched_pair_alpha_p}\\
(\i\tensor\epsilon_{2})\be=\i,\label{eq:matched_pair_discrete_beta}\\
(\i\tensor\i\tensor\epsilon_{2})(\mathscr{V})=\one_{\Linfty{\G_{1}}}\tensor\one_{\Linfty{\G_{2}}}=(\i\tensor\epsilon_{2}\tensor\i)(\mathscr{V}).\label{eq:matched_pair_discrete_V}
\end{gather}
Condition \prettyref{eq:matched_pair_alpha_p} means, essentially,
that $\G_{1}$ is ``connected'', while \prettyref{eq:matched_pair_discrete_beta}
and \prettyref{eq:matched_pair_discrete_V} are natural as $\G_{2}$
is discrete (see Vaes and Vergnioux \citep[Definition 1.24]{Vaes_Vergnioux__boundary}
and Packer and Raeburn \citep[Definition 2.1]{Packer_Raeburn__twisted_crs_prd}).

For starters, notice that 
\begin{equation}
(\i\tensor\i\tensor\epsilon_{2})(\mathscr{U})=\one_{\Linfty{\G_{1}}}\tensor\one_{\Linfty{\G_{1}}}.\label{eq:matched_pair_discrete_U}
\end{equation}
Indeed, denote the left-hand side by $U$. Applying the $*$-homomorphism
$\i\tensor\i\tensor\i\tensor\epsilon_{2}$ to \prettyref{eq:matched_U_V}
and using \prettyref{eq:matched_pair_discrete_V}, we obtain 
\[
\mathscr{U}^{*}=\mathscr{U}^{*}(\i\tensor\tau\sigma)\left[(\be\tensor\i)(U^{*})(\i\tensor\i\tensor(\i\tensor\epsilon_{2})\a)(\mathscr{V})\right],
\]
and since $\i\tensor\tau\sigma$ is faithful, 
\[
(\be\tensor\i)(U)=(\i\tensor\i\tensor(\i\tensor\epsilon_{2})\a)(\mathscr{V}).
\]
Applying $\i\tensor\epsilon_{2}\tensor\i$ and using \prettyref{eq:matched_pair_discrete_beta}
and \prettyref{eq:matched_pair_discrete_V}, we get $U=(\i\tensor(\i\tensor\epsilon_{2})\a)(\one)=\one$,
as desired.

We claim that for every $z\in\Linfty{\G}$, 
\[
\Delta(z)(\a(p)\tensor\one_{\Linfty{\G}})=\a(p)\tensor z\quad\implies\quad z\in\a(\Linfty{\G_{2}}).
\]
Indeed, suppose that the assumption is met. Then $\Delta^{\op}(z)(\one_{\Linfty{\G}}\tensor\one_{B(\H_{1})}\tensor p)=z\tensor\one_{B(\H_{1})}\tensor p$
by \prettyref{eq:matched_pair_alpha_p}. From \citep[Lemma 2.3]{Vaes_Vainerman__extensions_LCQGs}
we get 
\begin{multline*}
\Delta^{\op}(z)(\one_{\Linfty{\G}}\tensor\one_{B(\H_{1})}\tensor p)\\
\begin{split} & =(\be\tensor\i\tensor\i)(\tilde{W})\bigl[(\i\tensor\i\tensor\a)(\mathscr{V}(\i\tensor\Delta_{2}^{\op})(z)\mathscr{V}^{*})\bigr](\be\tensor\i\tensor\i)(\tilde{W}^{*})(\one_{\Linfty{\G}}\tensor\one_{B(\H_{1})}\tensor p)\\
 & =(\be\tensor\i\tensor\i)(\tilde{W})\bigl[(\i\tensor\i\tensor\a)(\mathscr{V}(\i\tensor\Delta_{2}^{\op})(z)(\one_{\Linfty{\G}}\tensor p)\mathscr{V}^{*})\bigr](\be\tensor\i\tensor\i)(\tilde{W}^{*})\\
 & =(\be\tensor\i\tensor\i)(\tilde{W})\bigl[(\i\tensor\i\tensor\a)(\mathscr{V}(z\tensor p)\mathscr{V}^{*})\bigr](\be\tensor\i\tensor\i)(\tilde{W}^{*}).
\end{split}
\end{multline*}
By \prettyref{eq:matched_pair_discrete_V} and \prettyref{eq:matched_pair_alpha_p}
we thus have 
\[
\begin{split}\Delta^{\op}(z)(\one_{\Linfty{\G}}\tensor\one_{B(\H_{1})}\tensor p) & =(\be\tensor\i\tensor\i)(\tilde{W})\bigl[(\i\tensor\i\tensor\a)(z\tensor p)\bigr](\be\tensor\i\tensor\i)(\tilde{W}^{*})\\
 & =(\be\tensor\i\tensor\i)(\tilde{W})(z\tensor\one_{B(\H_{1})}\tensor p)(\be\tensor\i\tensor\i)(\tilde{W}^{*}).
\end{split}
\]
The assumption hence implies that 
\[
(\be\tensor\i\tensor\i)(\tilde{W})(z\tensor\one_{B(\H_{1})}\tensor p)(\be\tensor\i\tensor\i)(\tilde{W}^{*})=z\tensor\one_{B(\H_{1})}\tensor p.
\]
Applying $\i\tensor\i\tensor\i\tensor\epsilon_{2}$ to both sides,
we deduce from \prettyref{eq:matched_pair_discrete_U} that 
\[
(\be\tensor\i)(W_{1})(z\tensor\one_{B(\H_{1})})(\be\tensor\i)(W_{1}^{*})=z\tensor\one_{B(\H_{1})}.
\]
Writing $w:=\hat{J}z\hat{J}$ and recalling that $w=R(z^{*})\in\Linfty{\G}$
where $R$ is the unitary antipode of $\G$, the last equation is
equivalent to $\theta(w)=\one\tensor w$, that is, $w\in\Linfty{\G}^{\theta}$.
By \prettyref{eq:extension_exactness}, $R(z^{*})$ belongs to the
image of $\a$. By the von Neumann algebraic version of \citep[Corollary 5.46]{Kustermans_Vaes__LCQG_C_star},
we have $R\circ\a=\a\circ R_{2}$. Therefore $z$ belongs to the image
of $\a$, and the proof is complete.\end{example}
\begin{rem}
The LCQG $\G$ constructed in \prettyref{exa:bicrossed_products}
and its dual are neither necessarily amenable nor necessarily co-amenable
\citep[Theorems 13 and 15]{Desmedt_Quaegebeur_Vaes}.
\end{rem}

\begin{rem}
The last part of the reasoning in \prettyref{exa:bicrossed_products}
uses in an essential way the \emph{exactness} of the short exact sequence
of LCQGs. As mentioned above, bicrossed products are characterized
as \emph{cleft} extensions. By \citep[Propositions 1.22 and 1.24]{Vaes_Vainerman__extensions_LCQGs},
this amounts to the structure of $\Linfty{\G}$ as a cocycle crossed
product. Examining the argument in \prettyref{exa:bicrossed_products},
this structure is used mainly in the simplification of $\Delta^{\op}(z)(\one\tensor\one\tensor p)$.
It is not clear at the moment whether this argument generalizes further,
thus leaving the general case of compact (or, even more generally,
closed) \emph{normal} quantum subgroups (see Vaes and Vainerman \citep{Vaes_Vainerman__low_dim_LCQGs})
open.
\end{rem}

\subsection{\label{sub:E_2}Example: quantum $E(2)$ group}

We prove that the complex unit circle $\mathbb{T}$, as a closed quantum
subgroup of $E(2)$, has the separation property. Considering the
quantum groups $E(2)$ and $\hat{E}(2)$, we essentially follow the
notation of Jacobs \citep{Jacobs_phd_thesis} although it does not
always agree with ours; further details can be found there. Fix $0<\mu<1$.
Set $\R^{\mu}:=\{\mu^{k}:k\in\Z\}$, $\overline{\R}^{\mu}:=\R^{\mu}\cup\left\{ 0\right\} $,
$\R(\mu^{1/2}):=\{\mu^{k/2}:k\in\Z\}$ and $\overline{\R}(\mu^{1/2}):=\R(\mu^{1/2})\cup\left\{ 0\right\} $.

The following is taken from \citep[Section 2.3]{Jacobs_phd_thesis}.
Let $\left(e_{k}\right)_{k\in\Z}$ be an orthonormal basis of $\ell_{2}(\Z)$.
Denote by $s$ the unitary operator over $\ell_{2}(\Z)$ which is
the shift given by $se_{k}:=e_{k+1}$, $k\in\Z$. Denote by $m$ the
strictly positive (unbounded) operator over $\ell_{2}(\Z)$ that acts
on its core $\linspan\left\{ e_{k}:k\in\Z\right\} $ by $me_{k}:=\mu^{k}e_{k}$,
$k\in\Z$.

Set $\H:=\ell_{2}(\Z)\tensor\ell_{2}(\Z)$ and $e_{k,l}:=e_{k}\tensor e_{l}$
for $k,l\in\Z$. Consider the unbounded operators over $\H$ defined
by 
\[
a:=m^{-1/2}\tensor m,\qquad b:=m^{1/2}\tensor s.
\]
Then $a$ is strictly positive, $b$ has polar decomposition $b=u\left|b\right|$
with $u:=\one\tensor s$ and $\left|b\right|=m^{1/2}\tensor\one$,
and $\sigma(a)=\overline{\R}(\mu^{1/2})=\sigma(b)$. Since $a,\left|b\right|$
commute, they have a joint Borel functional calculus. As observed
in \citep[Remark 2.5.20]{Jacobs_phd_thesis}, the joint continuous
functional calculus of $a,\left|b\right|$ is determined by the values
of the functions on $E:=\{(p,q)\in\R(\mu^{1/2})\times\overline{\R}(\mu^{1/2}):pq\in\overline{\R}^{\mu}\}$.
Similarly, as $\left|b\right|$, just like $a$, is injective, the
joint Borel functional calculus of $a,\left|b\right|$ is determined
by $F:=\{(p,q)\in\R(\mu^{1/2})\times\R(\mu^{1/2}):pq\in\R^{\mu}\}$.
Writing $\mathbb{B}(F)$ for the algebra of all bounded complex-valued
functions over $F$, we get an injection $\mathbb{B}(F)\ni g\mapsto g(a,\left|b\right|)\in B(\H)$.

The operator $W\in B(\H\tensor\H)$ is the unitary that satisfies
\begin{equation}
((\om_{e_{k,l},e_{p,q}}\tensor\i)(W))e_{m,n}=B(q-l,k-l-n+1)\delta_{k,p}e_{m-k+2q,n-k+l+q}\qquad(\forall k,l,p,q,m,n\in\Z),\label{eq:E_2__W}
\end{equation}
where $\left(B(k,n)\right)_{k,n\in\Z}$ are special scalars in the
complex unit disc.

The right, resp.~left, leg of $W$ norm-spans a $C^{*}$-algebra
$A$, resp.~$\hat{A}$, which is the reduced $C^{*}$-algebra underlying
the LCQG $E(2)$, resp.~$\hat{E}(2)$, and $W\in M(\hat{A}\tensormin A)$
\citep[Sections 2.4, 2.5]{Jacobs_phd_thesis}. The co-multiplications
$\Delta:A\to M(A\tensormin A)$, resp.~$\hat{\Delta}:\hat{A}\to M(\hat{A}\tensormin\hat{A})$
of $E(2)$, resp.~$\hat{E}(2)$, is given by $\Delta(x)=W(x\tensor\one)W^{*}$
for $x\in A$, resp.~$\hat{\Delta}(y):=W^{*}(\one\tensor y)W$ for
$y\in\hat{A}$. The duality relation between $E(2)$ and $\hat{E}(2)$
is opposite: $\hat{E}(2)=\widehat{E(2)}^{\op}$ \citep[Proposition 2.8.21]{Jacobs_phd_thesis},
but since $\mathbb{T}$ is commutative, that is meaningless for our
purposes.

The unbounded operators $a,a^{-1},b$ are affiliated with $\hat{A}$
in the sense of $C^{*}$-algebras, and $a$ is ``group like'', that
is, $\hat{\Delta}(a)=a\tensor a$, where the left-hand side is interpreted
as a nondegenerate $*$-homomorphism acting on an affiliated element.
This makes $\mathbb{T}$ a closed quantum subgroup of $E(2)$: identifying
$\ell_{\infty}(\Z)\cong\left\{ f(a):f\in C_{b}(\R(\mu^{1/2}))\right\} $
(recall that $a$ is injective!), the embedding $\gamma:\ell_{\infty}(\Z)\hookrightarrow M(\hat{A})$
is given by mapping $g\in\ell_{\infty}(\Z)$ to $f(a)$, where $f(\mu^{k/2}):=g(k)$,
$k\in\Z$ \citep[Subsection 2.8.5]{Jacobs_phd_thesis}. Denote by
$p$ the projection $k\mapsto\delta_{k,0}$ in $\ell_{\infty}(\Z)$.
Then $\gamma(p)$ is the projection onto $\left\{ e_{2l,l}:l\in\Z\right\} $.

To establish the separation property, consider all $y\in M(\hat{A})$
satisfying $\hat{\Delta}(y)(\gamma(p)\tensor\one)=(\gamma(p)\tensor y)$.
This means that $\one\tensor y$ commutes with $W(\gamma(p)\tensor\one)$,
or equivalently, that $y$ commutes with $(\om_{\z,\eta}\tensor\i)(W)$
for every $\z\in\Img\gamma(p)$ and $\eta\in\H$. Substituting $q-l$
for $t$ in \prettyref{eq:E_2__W}, this amounts to $y$ commuting
with each of the operators $x_{l,t}\in B(\H)$, $l,t\in\Z$, given
by $x_{l,t}e_{m,n}:=B(t,l-n+1)e_{m+2t,n+t}$ for $m,n\in\Z$.

Let $l,t\in\Z$. Clearly, $x_{l,t}$ commutes with $a$. For $m,n\in\Z$
and $s\in\R$, 
\[
\begin{split}\left|b\right|^{is}x_{l,t}e_{m,n} & =B(t,l-n+1)\left|b\right|^{is}e_{m+2t,n+t}=\mu^{is(m+2t)/2}B(t,l-n+1)e_{m+2t,n+t},\\
x_{l,t}\left|b\right|^{is}e_{m,n} & =\mu^{ism/2}x_{l,t}e_{m,n}=\mu^{ism/2}B(t,l-n+1)e_{m+2t,n+t},
\end{split}
\]
so that $\left|b\right|^{is}x_{l,t}=\mu^{ist}x_{l,t}\left|b\right|^{is}$
for every $s\in\R$, or formally $\left|b\right|x_{l,t}=\mu^{t}x_{l,t}\left|b\right|$.
This implies that for every $g\in\mathbb{B}(F)$ we have 
\begin{equation}
g(a,\left|b\right|)x_{l,t}=x_{l,t}g_{t}(a,\left|b\right|),\label{eq:E_2_g_x}
\end{equation}
where $g_{t}\in\mathbb{B}(F)$ is defined by $g_{t}(\a,\be):=g(\a,\mu^{t}\be)$.
Moreover, for $k,m,n\in\Z$,
\begin{equation}
\begin{split}u^{k}x_{l,t}e_{m,n} & =B(t,l-n+1)u^{k}e_{m+2t,n+t}=B(t,l-n+1)e_{m+2t,n+t+k},\\
x_{l,t}u^{k}e_{m,n} & =x_{l,t}e_{m,n+k}=B(t,l-n-k+1)e_{m+2t,n+t+k.}
\end{split}
\label{eq:E_2__u_x}
\end{equation}

\begin{lem}
\label{lem:E_2_summand_comm_x_lt}Let $g\in\mathbb{B}(F)$ and $k\in\Z$.
Assume that $u^{k}g(a,\left|b\right|)$ commutes with the operators
$\left(x_{l,t}\right)_{l,t\in\Z}$. If $k\neq0$, then $g(a,\left|b\right|)=0$;
if $k=0$, then $g$ is the restriction of $h\tensor\one$ for some
$h\in\mathbb{B}(\R(\mu^{1/2}))$.\end{lem}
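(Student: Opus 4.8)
The plan is to reduce the commutation hypothesis to a single scalar identity indexed by $(l,t,m,n)$ and then read off the structure of $g$ by playing $t$ against $l$. First I would apply both operators to the basis vector $e_{m,n}$. Writing $g[m,n]:=g(\mu^{n-m/2},\mu^{m/2})$ for the eigenvalue of $g(a,\left|b\right|)$ on $e_{m,n}$ (so $g(a,\left|b\right|)e_{m,n}=g[m,n]e_{m,n}$, since $ae_{m,n}=\mu^{n-m/2}e_{m,n}$ and $\left|b\right|e_{m,n}=\mu^{m/2}e_{m,n}$), inserting these eigenvalues and using \eqref{eq:E_2__u_x}, the relation $u^{k}g(a,\left|b\right|)x_{l,t}=x_{l,t}u^{k}g(a,\left|b\right|)$ becomes, after equating the coefficients of $e_{m+2t,n+t+k}$,
\[
B(t,l-n+1)\,g[m+2t,n+t]=g[m,n]\,B(t,l-n-k+1)\qquad(\forall l,m,n,t\in\Z).
\]
(Equivalently, cancelling the unitary $u^{k}$ and using \eqref{eq:E_2_g_x} together with $x_{l,t}u^{k}=u^{k}x_{l-k,t}$ records this as the operator identity $x_{l,t}\,g_{t}(a,\left|b\right|)=x_{l-k,t}\,g(a,\left|b\right|)$.) All the work is now contained in this identity, and the two cases come from specializing $t$.

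For $k=0$ I would use a single nonzero value of $t$, say $t=1$: the displayed identity reads $B(1,l-n+1)\,(g[m+2,n+1]-g[m,n])=0$ for every $l$. Since $B(1,\cdot)\not\equiv0$, for each $n$ there is an $l$ with $B(1,l-n+1)\neq0$, whence $g[m+2,n+1]=g[m,n]$ for all $m,n$. The map $(m,n)\mapsto(m+2,n+1)$ acts transitively on each level set of $(m,n)\mapsto 2n-m$, so $g[\cdot,\cdot]$ is constant on these sets; as the joint-spectrum point $(\mu^{n-m/2},\mu^{m/2})$ has first coordinate $\mu^{(2n-m)/2}$, this says exactly that $g$ depends only on its first variable. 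Defining $h\in\mathbb{B}(\R(\mu^{1/2}))$ by $h(\mu^{(2n-m)/2}):=g[m,n]$ (well defined because the fibres are the level sets, and bounded because $g$ is), I obtain $g=(h\tensor\one)|_{F}$, as claimed.

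For $k\neq0$ I would instead specialize to $t=0$, where $x_{l,0}$ is diagonal and $g_{0}=g$. The identity becomes $g[m,n]\,(B(0,l-n+1)-B(0,l-n-k+1))=0$ for all $l$. Because the sequence $c\mapsto B(0,c)$ is invariant under no nonzero shift, for each fixed $(m,n)$ there is an $l$ for which the bracket is nonzero, forcing $g[m,n]=0$; hence $g(a,\left|b\right|)=0$.

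The main obstacle is not the algebra above but the two arithmetic properties of the coefficients $B(k,n)$ driving the two cases: that $B(1,\cdot)$ (more generally each $B(t,\cdot)$) is not identically zero, and that $B(0,\cdot)$ is aperiodic. Both must be extracted from the explicit description of $B$ in \citep[Sections 2.4, 2.5]{Jacobs_phd_thesis}; note that the unitarity of $W$ yields only the normalization $\sum_{s}\left|B(s,c)\right|^{2}=1$ for each fixed $c$, which constrains the \emph{first} argument of $B$ and is orthogonal to what is needed here, so a genuine appeal to the formula for $B$ is unavoidable. A secondary point to handle carefully is the bijection $F\leftrightarrow\Z^{2}$, $(\mu^{a/2},\mu^{b/2})\leftrightarrow(b,(a+b)/2)$, which legitimizes passing between $g\in\mathbb{B}(F)$ and its basis eigenvalues $g[m,n]$, and hence makes the scalar identity meaningful.
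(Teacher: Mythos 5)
Your reduction to the scalar identity
\[
B(t,l-n+1)\,g[m+2t,n+t]=g[m,n]\,B(t,l-n-k+1)
\]
is correct and is exactly the paper's \prettyref{eq:E_2_summand_comm_x_lt} (with $\gamma=g[m,n]$ and $\gamma_{t}=g[m+2t,n+t]$), and your treatment of the case $k=0$ is essentially the paper's argument: you only use $t=1$ where the paper uses all $t$, but since the fibre of $F$ over a fixed first coordinate is a single orbit under multiplication by $\mu$, the relation $g=g_{1}$ already forces $g=h\tensor\one$, so that part is fine. The fact you invoke there, that $B(1,\cdot)\not\equiv0$, is the same input the paper takes from Jacobs (Corollary A.11).

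The case $k\neq0$ is where there is a genuine gap. You specialize to $t=0$ and rest the entire argument on the claim that $c\mapsto B(0,c)$ is invariant under no nonzero shift. This is precisely the degenerate choice of $t$: the operators $x_{l,0}$ are diagonal, and if $B(0,\cdot)$ happened to be $k$-periodic (in particular, constant) they would carry no information whatsoever against $u^{k}g(a,\left|b\right|)$, so the viability of your route hinges entirely on this aperiodicity. You flag it as something ``to be extracted from the explicit description of $B$,'' but you do not extract it, and it is not among the properties the paper actually uses; note also that the documented asymptotic $B(t,l)\to0$ as $\left|l\right|\to\infty$ is stated only for $t\neq0$, so nothing you cite controls $B(0,\cdot)$. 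The paper avoids the issue by working with a \emph{nonzero} $t$ chosen so that $B(t,0)\neq0$ (available because $(B(t,0))_{t}$ are the Fourier coefficients of a non-constant function): assuming $g[m,n]=\gamma\neq0$, the identity iterates to $B(t,sk)=(\gamma_{t}/\gamma)^{-s}B(t,0)$, a geometric progression whose modulus cannot tend to $0$ in both directions $s\to\pm\infty$, contradicting $B(t,l)\to0$. To repair your proof you would either need to prove the aperiodicity of $B(0,\cdot)$ from the definition of $F_{\mu}$ (plausible, but a nontrivial chain of facts about continuity at $0$, unimodularity and non-constancy that you do not assemble), or switch to a nonzero $t$ and run the decay argument as the paper does.
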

\begin{proof}
Both cases will use the following computation. Let $t,m,n\in\Z$.
Since $\C e_{m,n}$ is invariant under both $a$ and $\left|b\right|$,
it is invariant under $g(a,\left|b\right|)$ and $g_{t}(a,\left|b\right|)$.
Let $\gamma,\gamma_{t}\in\C$ be such that $g(a,\left|b\right|)e_{m,n}=\gamma e_{m,n}$
and $g_{t}(a,\left|b\right|)e_{m,n}=\gamma_{t}e_{m,n}$. By assumption,
for all $l\in\Z$ we have $x_{l,t}u^{k}g(a,\left|b\right|)=u^{k}g(a,\left|b\right|)x_{l,t}=u^{k}x_{l,t}g_{t}(a,\left|b\right|)$
from \prettyref{eq:E_2_g_x}, so using \prettyref{eq:E_2__u_x},
\[
x_{l,t}u^{k}g(a,\left|b\right|)e_{m,n}=\gamma x_{l,t}u^{k}e_{m,n}=\gamma B(t,l-n-k+1)e_{m+2t,n+t+k}
\]
is equal to
\[
u^{k}x_{l,t}g_{t}(a,\left|b\right|)e_{m,n}=\gamma_{t}u^{k}x_{l,t}e_{m,n}=\gamma_{t}B(t,l-n+1)e_{m+2t,n+t+k},
\]
that is, 
\begin{equation}
\gamma B(t,l-n-k+1)=\gamma_{t}B(t,l-n+1).\label{eq:E_2_summand_comm_x_lt}
\end{equation}

Suppose that $k=0$. Let $t,m,n\in\Z$ and let $\gamma,\gamma_{t}\in\C$
be as above. Then for every $l\in\Z$, we have $\gamma B(t,l-n+1)=\gamma_{t}B(t,l-n+1)$
from \prettyref{eq:E_2_summand_comm_x_lt}. Choosing $l$ such that
$B(t,l-n+1)\neq0$, which is possible by \citep[Corollary A.11]{Jacobs_phd_thesis},
we get $\gamma=\gamma_{t}$. As $m,n$ were arbitrary, we deduce that
$g(a,\left|b\right|)=g_{t}(a,\left|b\right|)$, hence $g=g_{t}$.
By the definition of $F$, as $t$ was arbitrary, $g$ is of the form
$h\tensor\one$.

Suppose that $k\neq0$. Since $\left(B(t,0)\right)_{t\in\Z}$ are
the Fourier coefficients of a non-constant function \citep[Definition A.4]{Jacobs_phd_thesis},
we can fix $0\neq t\in\Z$ with $B(t,0)\neq0$. Assuming that $g(a,\left|b\right|)\neq0$,
fix $m,n\in\Z$ such that $g(a,\left|b\right|)e_{m,n}\neq0$. Let
$\gamma,\gamma_{t}\in\C$ be as above; then $\gamma\neq0$. Replacing
$l-n+1$ by $l$ in \prettyref{eq:E_2_summand_comm_x_lt} for convenience,
we get $\gamma B(t,l-k)=\gamma_{t}B(t,l)$ for all $l\in\Z$, and
in particular, $\gamma_{t}\neq0$ (take $l=k$). Hence $B(t,sk)=(\gamma_{t}/\gamma)^{-s}B(t,0)$
for all $s\in\Z$. From \citep[Proposition A.9]{Jacobs_phd_thesis},
since $t\neq0$, we have $B(t,l)\xrightarrow[\left|l\right|\to\infty]{}0$,
a contradiction.
\end{proof}
Let $\hat{M}$ be the strong closure of $\hat{A}$ in $B(\H)$. We
need a certain expansion of elements of $\hat{M}$.
\begin{lem}
\label{lem:E_2_vN_alg_structure}Every $y\in\hat{M}$ possesses a
(unique) sequence of functions $\left(g_{k}\right)_{k\in\Z}$ in $\mathbb{B}(F)$
such that 
\[
y=\mathrm{strong-}\lim_{N\to\infty}\sum_{k=-N}^{N}(1-\frac{\left|k\right|}{N+1})u^{k}g_{k}(a,\left|b\right|)
\]
\end{lem}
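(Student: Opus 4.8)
The plan is to realise $\hat{M}$ as the von Neumann algebra generated by the diagonal algebra of the pair $(a,|b|)$ together with the shift $u$, to equip it with a canonical circle action whose homogeneous components are exactly the operators $u^{k}g(a,|b|)$, and then to recover $y$ from its Fourier coefficients by Fej\'er (Ces\`aro) summation.

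First I would record the diagonal picture. The basis vectors $e_{m,n}$ are joint eigenvectors, with $ae_{m,n}=\mu^{\,n-m/2}e_{m,n}$ and $|b|e_{m,n}=\mu^{m/2}e_{m,n}$, and $(m,n)\mapsto(\mu^{\,n-m/2},\mu^{m/2})$ is a bijection onto $F$. Consequently the joint functional calculus $D:=\{g(a,|b|):g\in\mathbb{B}(F)\}$ is precisely the algebra of all bounded operators diagonal in $(e_{m,n})$, and $\mathbb{B}(F)\to D$ is a normal $*$-isomorphism. Since $a,|b|$ are affiliated with $\hat{A}$ and $u$ is the (unitary) phase in the polar decomposition $b=u|b|$, both $D$ and $u$ lie in $\hat{M}$, and by the description of $\hat{E}(2)$ in \citep{Jacobs_phd_thesis} the algebra $\hat{M}$ is generated by $D$ and $u$. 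The projection $P_{m}$ onto $\clinspan\{e_{m,n}:n\in\Z\}$ belongs to $D$ and commutes with $u$, hence is central in $\hat{M}$; in particular every $y\in\hat{M}$ preserves the $m$-index.

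Next I would introduce the grading. Let $v_{\theta}\in D$ be the diagonal unitary $v_{\theta}e_{m,n}:=e^{in\theta}e_{m,n}$ for $\theta\in\mathbb{T}$. A direct computation gives $v_{\theta}uv_{\theta}^{*}=e^{i\theta}u$, so $\alpha_{\theta}:=\Ad{v_{\theta}}$ (which preserves $\hat{M}$ as $v_{\theta}\in\hat{M}$) is a point-strongly-continuous action of $\mathbb{T}$ on $\hat{M}$ with $\alpha_{\theta}(u^{k}g(a,|b|))=e^{ik\theta}u^{k}g(a,|b|)$. For $y\in\hat{M}$ and $k\in\Z$ set $y_{k}:=\tfrac{1}{2\pi}\int_{-\pi}^{\pi}e^{-ik\theta}\alpha_{\theta}(y)\d\theta$, a weak integral; then $\|y_{k}\|\le\|y\|$, $y_{k}\in\hat{M}$ (as $\hat{M}$ is ultraweakly closed and $\alpha$-invariant), and translating $\theta$ gives $\alpha_{\phi}(y_{k})=e^{ik\phi}y_{k}$. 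The fixed-point algebra is $\hat{M}^{\alpha}=\hat{M}\cap\{v_{\theta}\}'$, whose elements preserve the $n$-index (they commute with the $v_{\theta}$) and the $m$-index (by centrality of $P_{m}$), hence are diagonal; thus $\hat{M}^{\alpha}=D$. Since $u$ is unitary and $\alpha_{\phi}(u^{-k}x)=u^{-k}x$ whenever $\alpha_{\phi}(x)=e^{ik\phi}x$, the $k$-th homogeneous component of $\hat{M}$ equals $u^{k}D$; in particular $y_{k}=u^{k}g_{k}(a,|b|)$ for a unique $g_{k}\in\mathbb{B}(F)$.

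Finally I would sum by Fej\'er means. The Ces\`aro sum $\sum_{k=-N}^{N}(1-\tfrac{|k|}{N+1})y_{k}$ equals $\tfrac{1}{2\pi}\int_{-\pi}^{\pi}K_{N}(\theta)\alpha_{\theta}(y)\d\theta$, where $K_{N}(\theta)=\sum_{|k|\le N}(1-\tfrac{|k|}{N+1})e^{ik\theta}\ge0$ is the Fej\'er kernel, a positive approximate identity on $\mathbb{T}$. For each $\xi\in\H$ the map $\theta\mapsto\alpha_{\theta}(y)\xi=v_{\theta}yv_{\theta}^{*}\xi$ is norm-continuous with value $y\xi$ at $\theta=0$, so the usual approximate-identity estimate gives $\tfrac{1}{2\pi}\int_{-\pi}^{\pi}K_{N}(\theta)\alpha_{\theta}(y)\xi\d\theta\to y\xi$ in norm. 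Hence the Ces\`aro sums converge strongly to $y$; uniqueness of $(g_{k})$ is immediate, since its terms are forced to be the Fourier coefficients $y_{k}$ of $y$. The hard part will be the structural input that $\hat{M}$ is generated by $D$ and $u$ and that $\hat{M}^{\alpha}=D$ — equivalently, that the only $\alpha$-fixed operators are diagonal — which hinges on the centrality of the projections $P_{m}$ and on the identification of $D$ with all diagonal operators; the remaining points (strong continuity of $(v_{\theta})$, existence of the weak integral, and Fej\'er convergence) are routine.
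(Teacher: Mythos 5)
Your argument is correct and shares its backbone with the paper's proof: both introduce the circle of diagonal unitaries (the paper's $W_{\l}=\one\tensor w_{\l}$ is your $v_{\theta}$), extract Fourier coefficients by averaging against characters, and recover $y$ by Fej\'{e}r/Ces\`{a}ro summation, the strong convergence of the Ces\`{a}ro means coming from the positivity of the kernel and the norm-continuity of $\theta\mapsto\alpha_{\theta}(y)\xi$. Where you genuinely diverge is in identifying the $k$-th coefficient as $u^{k}g_{k}(a,\left|b\right|)$. The paper takes a \emph{bounded} net of finite sums $\sum_{k}u^{k}g_{ki}(a,\left|b\right|)$ converging strongly to $y$ (Jacobs, Theorem 2.5.21), uses continuity of $\Upsilon_{n}$ for the bounded strong topology to get $u^{n}g_{ni}(a,\left|b\right|)\to\Upsilon_{n}(y)$, and concludes because $u^{-n}\Upsilon_{n}(y)$ is then a strong limit of diagonal operators. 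You instead compute the fixed-point algebra of the action outright: the joint eigenspaces of $(a,\left|b\right|)$ are one-dimensional (your bijection of $(m,n)$ onto $F$), so $D$ is the full diagonal algebra, the $P_{m}$ are central, and $\hat{M}^{\alpha}=D$, whence $u^{-k}y_{k}\in D$. This is cleaner and avoids chasing the approximating net, but it consumes the same external input -- that $\hat{M}$ is generated by $D$ and $u$ -- both to make $P_{m}$ central and to place $D$ inside $\hat{M}$, so nothing is saved on the Jacobs side. One small point to write out: your uniqueness claim needs the $n$-th Fourier coefficient to pass through the strong limit of the Ces\`{a}ro sums, which for an arbitrary competing sequence $(g_{k})$ are not a priori bounded; this is repaired by compressing with the rank-one projections $p_{m,n+k}\,\cdot\,p_{m,n}$ (or by first noting that any such representation forces $\Upsilon_{n}$-coefficients termwise), exactly as the paper's closing sentence implicitly does.
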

\begin{proof}
For each $\l\in\mathbb{T}$, define a unitary $w_{\l}\in B(\ell_{2}(\Z))$
by $w_{\l}(e_{l}):=\l^{l}e_{l}$ ($l\in\Z$), and a unitary $W_{\l}\in B(\H)$
by $W_{\l}:=1\tensor w_{\l}$. Then $W_{\l}$ commutes with $a,\left|b\right|$
and $W_{\l}uW_{\l}^{*}=\l u$. For every $k\in\N$ and $g_{k}\in\mathbb{B}(F)$
we thus get 
\begin{equation}
\mathrm{Ad}(W_{\l})(u^{k}g_{k}(a,\left|b\right|))=\l^{k}u^{k}g_{k}(a,\left|b\right|).\label{eq:W_lambda_trick}
\end{equation}
Given $n\in\Z$, define the ``Fourier coefficient'' contraction
$\Upsilon_{n}\in B(B(\H))$ by 
\[
\Upsilon_{n}(y):=\frac{1}{2\pi}\int_{\mathbb{T}}\l^{-n}\mathrm{Ad}(W_{\l})(y)\left|\mathrm{d}\l\right|\qquad(y\in B(\H)),
\]
where the integral converges strongly. The operator $\Upsilon_{n}$
is continuous in the bounded strong operator topology. Letting $\left\{ K_{N}\right\} _{N=1}^{\infty}$
denote Fej\'{e}r's kernel, we have, for $y\in B(\H)$ and $N\in\N$,
\[
\sum_{n=-N}^{N}(1-\frac{\left|n\right|}{N+1})\Upsilon_{n}(y)=\int_{\mathbb{T}}\frac{1}{2\pi}K_{N}(\l)\mathrm{Ad}(W_{\l})(y)\left|\mathrm{d}\l\right|.
\]
Thus, the sequence $\bigl\{\sum_{n=-N}^{N}(1-\frac{\left|n\right|}{N+1})\Upsilon_{n}(y)\bigr\}_{N=1}^{\infty}$
is bounded by $\left\Vert y\right\Vert $, and it converges strongly
to $y$.

On account of \prettyref{eq:W_lambda_trick}, if $y$ has the form
$\sum_{k=-N}^{N}u^{k}g_{k}(a,\left|b\right|)$ then $\Upsilon_{n}(y)=u^{n}g_{n}(a,\left|b\right|)$
for $-N\leq n\leq N$ and $0$ otherwise. Every element $y$ of $\hat{M}$
is the strong limit of a bounded net $\left(y_{i}\right)$ of elements
of the form $y_{i}=\sum_{k\in\Z}u^{k}g_{ki}(a,\left|b\right|)$, where
$g_{ki}\neq0$ for only finitely-many values of $k$ for every $i$
\citep[Theorem 2.5.21]{Jacobs_phd_thesis}. Consequently, $\Upsilon_{n}(y_{i})=u^{n}g_{ni}(a,\left|b\right|)\to\Upsilon_{n}(y)$
strongly for all $n$. As $u$ is unitary, we infer that the net $(g_{ni}(a,\left|b\right|))_{i}$
converges strongly for all $n$, necessarily to $g_{n}(a,\left|b\right|)$
for some $g_{n}\in\mathbb{B}(F)$. By the foregoing, $y=\lim_{N}\sum_{n=-N}^{N}(1-\frac{\left|n\right|}{N+1})u^{n}g_{n}(a,\left|b\right|)$
strongly. For uniqueness, have $\Upsilon_{n}$ act on both sides of
the equation.\end{proof}
\begin{lem}
\label{lem:E_2_comm_x_lt}Let $y\in\hat{M}$, and let $\left(g_{k}\right)$
be the functions corresponding to $y$ as in \prettyref{lem:E_2_vN_alg_structure}.
If $y$ commutes with all the operators $x_{l,t}\in B(\H)$, $l,t\in\Z$,
then so does $u^{k}g_{k}(a,\left|b\right|)$ for every $k\in\Z$.\end{lem}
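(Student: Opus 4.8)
The plan is to reuse the family of unitaries $W_{\l}=\one\tensor w_{\l}$ ($\l\in\mathbb{T}$) introduced in the proof of \prettyref{lem:E_2_vN_alg_structure}, by observing that not only the monomials $u^{k}g_{k}(a,\left|b\right|)$ but also the generators $x_{l,t}$ are eigenoperators for $\mathrm{Ad}(W_{\l})$. Since $\Upsilon_{k}(y)=u^{k}g_{k}(a,\left|b\right|)$ is produced from $y$ by the averaging operator $\Upsilon_{k}$, and commutation with $x_{l,t}$ is a condition that $\mathrm{Ad}(W_{\l})$ respects (up to an irrelevant scalar), the commutation property will pass from $y$ to each $\Upsilon_{k}(y)$.

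First I would record the equivariance of the $x_{l,t}$. Using $W_{\l}e_{m,n}=\l^{n}e_{m,n}$ together with the defining formula $x_{l,t}e_{m,n}=B(t,l-n+1)e_{m+2t,n+t}$, a direct computation gives, for every $\l\in\mathbb{T}$, $l,t\in\Z$ and $m,n\in\Z$,
\[
W_{\l}x_{l,t}W_{\l}^{*}e_{m,n}=\l^{-n}B(t,l-n+1)W_{\l}e_{m+2t,n+t}=\l^{t}B(t,l-n+1)e_{m+2t,n+t}=\l^{t}x_{l,t}e_{m,n},
\]
so that $\mathrm{Ad}(W_{\l})(x_{l,t})=\l^{t}x_{l,t}$. (This is consistent with $W_{\l}uW_{\l}^{*}=\l u$, since $x_{l,t}$ shifts the second index by $t$.) Next, because $\mathrm{Ad}(W_{\l})$ is a $*$-automorphism of $B(\H)$ and $y$ commutes with $x_{l,t}$ by hypothesis, the operator $\mathrm{Ad}(W_{\l})(y)$ commutes with $\mathrm{Ad}(W_{\l})(x_{l,t})=\l^{t}x_{l,t}$, hence with $x_{l,t}$ itself. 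Thus, for each fixed $l,t$, the function $\l\mapsto\mathrm{Ad}(W_{\l})(y)$ takes values in the commutant of $x_{l,t}$.

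Finally I would feed this into the Fejér-type average. Recall from \prettyref{lem:E_2_vN_alg_structure} that $u^{k}g_{k}(a,\left|b\right|)=\Upsilon_{k}(y)=\frac{1}{2\pi}\int_{\mathbb{T}}\l^{-k}\mathrm{Ad}(W_{\l})(y)\left|\mathrm{d}\l\right|$, the integral converging in the strong operator topology. Both left and right multiplication by the fixed bounded operator $x_{l,t}$ are continuous for the strong operator topology, so they may be pulled inside the strong integral; combining this with the previous step yields
\[
\Upsilon_{k}(y)x_{l,t}=\frac{1}{2\pi}\int_{\mathbb{T}}\l^{-k}\mathrm{Ad}(W_{\l})(y)x_{l,t}\left|\mathrm{d}\l\right|=\frac{1}{2\pi}\int_{\mathbb{T}}\l^{-k}x_{l,t}\mathrm{Ad}(W_{\l})(y)\left|\mathrm{d}\l\right|=x_{l,t}\Upsilon_{k}(y).
\]
Hence $u^{k}g_{k}(a,\left|b\right|)=\Upsilon_{k}(y)$ commutes with every $x_{l,t}$, as claimed. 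The whole argument is essentially formal once the equivariance identity $\mathrm{Ad}(W_{\l})(x_{l,t})=\l^{t}x_{l,t}$ is in hand, so the only real step is that short computation; there is no serious obstacle, the point being simply to notice that the $x_{l,t}$ sit in a single $\mathrm{Ad}(W_{\l})$-isotypic component compatible with the Fourier projection $\Upsilon_{k}$.
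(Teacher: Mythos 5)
Your proof is correct, but it takes a genuinely different route from the paper's. The paper works with the rank-one projections $p_{m,n}$ onto $\C e_{m,n}$, uses the intertwining relations $up_{m,n}=p_{m,n+1}u$ and $x_{l,t}p_{m,n}=p_{m+2t,n+t}x_{l,t}$, and sandwiches the strongly convergent Fej\'{e}r sums of $yx_{l,t}=x_{l,t}y$ between $p_{m+2t,n+t+k_{0}}$ and $p_{m,n}$ so that only the $k_{0}$-th term of the commutator survives; summing over $m,n$ then kills that term. You instead observe that each $x_{l,t}$ is an eigenoperator of the circle action, $\mathrm{Ad}(W_{\l})(x_{l,t})=\l^{t}x_{l,t}$ (your computation checks out: $W_{\l}e_{m,n}=\l^{n}e_{m,n}$ and $x_{l,t}$ shifts the second index by $t$), so the commutant of $\{x_{l,t}\}_{l,t}$ is $\mathrm{Ad}(W_{\l})$-invariant and hence invariant under each averaging operator $\Upsilon_{k}$; since the proof of the expansion lemma identifies $\Upsilon_{k}(y)$ with $u^{k}g_{k}(a,\left|b\right|)$, you are done. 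The interchange of the bounded operator $x_{l,t}$ with the strongly convergent integral is legitimate, and your appeal to $\Upsilon_{k}(y)=u^{k}g_{k}(a,\left|b\right|)$ is exactly what the uniqueness step of the previous lemma provides. Your argument is shorter and more conceptual --- it isolates the structural reason the lemma holds, namely that the $x_{l,t}$ lie in single isotypic components of the $\mathbb{T}$-action implementing the Fourier decomposition --- and it bypasses the Fej\'{e}r/Ces\`{a}ro summation entirely. The paper's argument is more computational but only needs the index-shifting relations, not the equivariance identity.
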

\begin{proof}
For $m,n\in\Z$, denote by $p_{m,n}$ the projection of $\H$ onto
$\C e_{m,n}$. Fix $l,t\in\Z$. Clearly, $g(a,\left|b\right|)$ commutes
with $p_{m,n}$ for every $g\in\mathbb{B}(F)$, $up_{m,n}=p_{m,n+1}u$
and $x_{l,t}p_{m,n}=p_{m+2t,n+t}x_{l,t}$. By assumption, we have
$\lim_{N\to\infty}\sum_{k=-N}^{N}(1-\frac{\left|k\right|}{N+1})x_{l,t}u^{k}g_{k}(a,\left|b\right|)=\lim_{N\to\infty}\sum_{k=-N}^{N}(1-\frac{\left|k\right|}{N+1})u^{k}g_{k}(a,\left|b\right|)x_{l,t}$,
both limits being in the strong operator topology. Fix $k_{0}\in\Z$.
For every $m,n\in\Z$, we get 
\begin{multline*}
\lim_{N\to\infty}\sum_{k=-N}^{N}(1-\frac{\left|k\right|}{N+1})p_{m+2t,n+t+k_{0}}x_{l,t}u^{k}g_{k}(a,\left|b\right|)p_{m,n}\\
=\lim_{N\to\infty}\sum_{k=-N}^{N}(1-\frac{\left|k\right|}{N+1})p_{m+2t,n+t+k_{0}}u^{k}g_{k}(a,\left|b\right|)x_{l,t}p_{m,n}.
\end{multline*}
As a result, with $z:=x_{l,t}u^{k_{0}}g_{k_{0}}(a,\left|b\right|)-u^{k_{0}}g_{k_{0}}(a,\left|b\right|)x_{l,t}$,
we have $zp_{m,n}=p_{m+2t,n+t+k_{0}}zp_{m,n}=0$. Summing over all
$m,n\in\Z$ we get the desired commutation relation.
\end{proof}
We are now ready to prove that $\mathbb{T}$ has the separation property
in $E(2)$. If $y\in\hat{M}$ with corresponding functions $\left(g_{k}\right)$
as in \prettyref{lem:E_2_vN_alg_structure} commutes with all the
operators $\left(x_{l,t}\right)_{l,t\in\Z}$, then by \prettyref{lem:E_2_comm_x_lt},
$u^{k}g_{k}(a,\left|b\right|)$ commutes with $\left(x_{l,t}\right)_{l,t\in\Z}$
for every $k\in\Z$. \prettyref{lem:E_2_summand_comm_x_lt} implies
that $g_{k}(a,\left|b\right|)=0$ for $k\neq0$ and that $g_{0}=h\tensor\one$
for a suitable $h$. This precisely means that $y$ is a function
of $a$, namely $y\in\Img\gamma$. So we established \prettyref{eq:compact_sep_prop_condition}
in our setting, and the proof is complete.

\section*{Acknowledgments}

We thank Matthew Daws for his interest and helpful comments. We are
grateful to the referee for carefully reading the paper and making
very useful suggestions.

\bibliographystyle{amsplain}
\bibliography{LCQGs_PosDef}

\end{document}